\documentclass[final,12pt]{colt2025} 


\title[The Adaptive Complexity of Finding a Stationary Point]{The Adaptive Complexity of Finding a Stationary Point}
\usepackage{times}
\DeclareSymbolFont{extraup}{U}{zavm}{m}{n}
\DeclareMathSymbol{\varheart}{\mathalpha}{extraup}{86}
\DeclareMathSymbol{\vardiamond}{\mathalpha}{extraup}{87}

\newcommand{\R}{\mathbb{R}}




\def\eps{\varepsilon}

\usepackage[algo2e,ruled]{algorithm2e}
\LinesNumbered

\SetKwComment{Comment}{$\triangleright$\ }{}

\providecommand\norm[1]{\|#1\|}

\makeatletter
\newcommand*{\inlineequation}[2][]{%
  \begingroup
    \refstepcounter{equation}%
    \ifx\\#1\\%
    \else
      \label{#1}%
    \fi
    \relpenalty=10000 %
    \binoppenalty=10000 %
    \ensuremath{%
      #2%
    }%
    ~\@eqnnum
  \endgroup
}
\makeatother


\newtheorem{fact}[theorem]{Fact}

\usepackage{threeparttable}

\usepackage{xspace}

\xspaceaddexceptions{[]\{\}}

\usepackage{hyperref}

\usepackage{thm-restate}



\newcommand{\eat}[1]{}

\makeatletter
\newcommand*{\rom}[1]{\expandafter\@slowromancap\romannumeral #1@}

\usepackage{multirow}

\usepackage{tablefootnote}
\usepackage{float}
\usepackage{makecell}
\usepackage{enumitem}


\allowdisplaybreaks

\DeclareMathSymbol{\varheartsuit}{\mathalpha}{extraup}{86}
\DeclareMathSymbol{\vardiamondsuit}{\mathalpha}{extraup}{87}




\coltauthor{%
 \Name{Huanjian Zhou} \Email{zhou@ms.k.u-tokyo.ac.jp}\\
 \addr University of Tokyo, RIKEN AIP
 \AND
 \Name{Andi Han} \Email{andi.han@sydney.edu.au}\\
 \addr RIKEN AIP, University of Sydney 
 \AND
 \Name{Akiko Takeda} \Email{takeda@mist.i.u-tokyo.ac.jp}\\
 \addr University of Tokyo, RIKEN AIP
 \AND
 \Name{Masashi Sugiyama} \Email{sugi@k.u-tokyo.ac.jp}\\
 \addr RIKEN AIP, University of Tokyo 
}

\begin{document}

\maketitle

\begin{abstract}
In large-scale applications, such as machine learning, it is desirable to design non-convex optimization algorithms with a high degree of parallelization. In this work, we study the adaptive complexity of finding a stationary point, which is the minimal number of sequential rounds required to achieve stationarity given polynomially many queries executed in parallel at each round. 

For the high-dimensional case, \emph{i.e.}, $d = \widetilde{\Omega}(\varepsilon^{-(2 + 2p)/p})$, we show that for any (potentially randomized) algorithm, there exists a function with Lipschitz $p$-th order derivatives such that the algorithm requires at least $\varepsilon^{-(p+1)/p}$ iterations to find an $\varepsilon$-stationary point.  Our lower bounds are tight and show that even with $\mathrm{poly}(d)$ 
queries per iteration, no algorithm has better convergence rate than those achievable with one-query-per-round algorithms. In other words, gradient descent, the cubic-regularized Newton's method, and the $p$-th order adaptive regularization method are adaptively optimal. Our proof relies upon novel analysis with the characterization of the output for the hardness potentials based on a chain-like structure with random partition.

For the constant-dimensional case, \emph{i.e.}, $d = \Theta(1)$, we propose an algorithm that bridges grid search and gradient flow trapping, finding an approximate stationary point in constant iterations. Its asymptotic tightness is verified by a new lower bound on the required queries per iteration. We show there exists a smooth function such that any algorithm running with $\Theta(\log (1/\varepsilon))$ rounds requires at least $\widetilde{\Omega}((1/\varepsilon)^{(d-1)/2})$ queries per round. This lower bound is tight up to a logarithmic factor, and implies that the gradient flow trapping is adaptively optimal.


\end{abstract}

\begin{keywords}%
  Non-convex optimization, Stationary points, Adaptive complexity%
\end{keywords}

\newpage
\tableofcontents
\newpage

\section{Introduction}
Let $f : \R^d \to \R$ be a smooth function (\emph{i.e.}, the map $\boldsymbol{x} \mapsto \nabla f(\boldsymbol{x})$ is Lipschitz, and $f$ is possibly non-convex). 
The problem of minimizing $f$ is one of the most fundamental problems across a wide range of scientific and engineering disciplines, with particular importance in modern machine learning~\citep{jain2017non,bottou2018optimization}.
Generally, without additional structured assumptions on $f$, it is $\mathcal{NP}$-hard to find approximate global minima or even test if a point is a local minimum~\citep{nemirovskij1983problem} or a high-order saddle point~\citep{murty1985some}.
As an alternative measure of optimization convergence, we study the problem of finding an $\varepsilon$-approximate stationary point (for $\varepsilon > 0$), \emph{i.e.}, a point $\boldsymbol{x}\in \R^d$ such that 
\[\left\|\nabla f(\boldsymbol{x})\right\| \leq \varepsilon.\]
This is motivated by a line of research that identifies sub-classes of non-convex problems for which all (first-order or second-order) stationary points are globally optimal~\citep{choromanska2015loss,ge2016matrix,kawaguchi2016deep,ge2015escaping,sun2018geometric,ma2018implicit,allen2019convergence}. In addition, practically efficient (gradient-based) algorithms have been developed to find stationary points \citep{kingma2014adam,jin2017escape,zaheer2018adaptive,fang2018spider}. 
%
Furthermore, significant progress has been made in  developing sequential algorithms for this problem and proving non-asymptotic convergence rates~\citep{cartis2023scalable,nesterov2006cubic,nesterov2013introductory,birgin2017worst,cartis2020concise,cartis2020sharp}. 

The algorithms underlying the above results are highly sequential and fail to fully exploit contemporary parallel computing resources such as multi-core central processing units (CPUs) and many-core graphics processing units (GPUs).
Nevertheless, given the widespread application of non-convex optimization in machine learning and the continual growth in dataset sizes~\citep{bottou2018optimization}, there is a persistent need to accelerate non-convex optimization through parallelization~\citep{dean2012large,you2017large,recht2011hogwild,You2020Large}.

A convenient metric for parallelization in black-box oracle models is \emph{adaptivity}, which is the number of sequential rounds it makes when each round can execute polynomial independent queries in parallel.
Over the past several years, there have been
breakthroughs in the study of adaptivity in various problems including sorting~\citep{valiant1975parallelism,cole1988parallel,braverman2016parallel}, multi-armed bandits~\citep{agarwal2017learning}, property testing~\citep{canonne2018adaptivity,chen2018settling}, submodular optimization~\citep{balkanski2018adaptive,chakrabarty2024parallel,li2020polynomial}, convex optimization~\citep{balkanski2018parallelization,diakonikolas2019lower,bubeck2019complexity,carmon2023resqueing}, and log-concave sampling~\citep{zhou2024parallel,anari24a,zhou2024adaptive}.
%


While the adaptive complexity of convex optimization has been extensively studied—most notably with \citep{balkanski2018parallelization} demonstrating that parallelization does not provide acceleration for convex optimization—the adaptive complexity of \textit{non-convex} optimization has not been explored.
This gap motivates our investigation into the question:
\begin{center}
\textit{Whether parallelization can fundamentally accelerate non-convex optimization   
in both {constant- and high-dimensional settings}}?
\end{center}
%

\subsection{Our results}

{
\begin{table}[t!]
\footnotesize
\renewcommand\arraystretch{1.4}
\centering
\caption{Comparisons of the state-of-the-art algorithms and our lower bounds. When $d = \Theta(1)$, we focus on the optimization over cube constraint $[0,1]^d$. When $d = \widetilde{\Omega}\left(\varepsilon^{-\frac{2+2p}{p}}\right)$, we consider $p$-th order smoothness and assume access to a $p$-th order oracle. 
Here, $\widetilde{\Omega}$ omits logarithmic factors in the parameter $1/\varepsilon$.
}
\resizebox{0.99\textwidth}{!}{%
\begin{tabular}{|c|c|c|c|}
\hline
\multicolumn{2}{|c|}{{Problem setting}} & 
\makecell[c]{Adaptive complexity}&\makecell[c]{Queries per iteration} \\
\hline
\multirow{4}{*}{ \makecell{\\\\(constant) ~~ $d = \Theta(1)$\\ ($d\geq 2$)}} & \makecell[c]{Upper bounds} &  
$k = \Theta(1)$ 
~(Theorem~\ref{the:main3})
&
$\mathcal{O}\bigg(
\varepsilon^{-\frac{\left({d+1}\right)^k}{\left({2d}\right)^k-(d+1)^k}\cdot \frac{d-1}{2}}
\bigg)$ 
\\
\cline{2-4}
  & \makecell[c]{Lower bounds} &   $k = \Theta(1)$~(Theorem~\ref{the:main2}) & $\widetilde{{\Omega}}\left(\varepsilon^{-\frac{d^k}{d^k-1}\cdot \frac{d-1}{2}}\right)$\\
  \cline{2-4}
  & \makecell[c]{Upper bounds} &  
$\Theta\left(\log \left(\frac{1}{\varepsilon}\right)\right)$ 
~\citep{hollender2023computational} 
&
$\mathcal{O}\left(\varepsilon^{-(d-1)/2}\right)$ 
\\
  \cline{2-4}
  & \makecell[c]{Lower bounds} &   $\Theta(\log (\frac{1}{\varepsilon}))$~(Theorem~\ref{the:main2}) & $\widetilde{{\Omega}}\left(\varepsilon^{-(d-1)/2}\right)$\\
\hline
\multirow{3}{*}{(high-dim.)~ $d = \widetilde{\Omega}\left(\varepsilon^{-\frac{2+2p}{p}}\right)$ }   & \makecell[c]{Upper bounds} & \makecell[c]{$\mathcal{O}\left(\varepsilon^{-\frac{1+p}{p}}\right)$~\citep{birgin2017worst}} & $1$\\
\cline{2-4}
  & \makecell[c]{Lower bounds}  &  ${\Omega}\left(\varepsilon^{-\frac{1+p}{p}}\right)$ \citep{carmon2020lower}      & $1$ \\
\cline{2-4}
  & \makecell[c]{Lower bounds}  &  ${\Omega}\left(\varepsilon^{-\frac{1+p}{p}}\right)$~(Theorem~\ref{the:main})      & $\mathsf{poly}(d)$ \\
\hline
\end{tabular}
}
\vspace{-0.2cm}
\label{table:results}
\end{table}
}

In this paper, we make significant progress by establishing new lower bounds for the high-dimensional setting, as well as both upper and lower bounds in the constant-dimensional case.
Notably, our lower bounds either match the best known upper bounds or match our proposed new upper bounds up to logarithmic factors. As a result, we obtain the first \emph{tight} adaptive complexity characterizations for finding stationary points.

\paragraph{Lower bounds in high dimension.}
We derive the following adaptive complexity in the high-dimensional setting  $d = \widetilde{\Omega}\left(\varepsilon^{-(2+2p)/{p}}\right)$.
\begin{theorem}[\bf{informal, see~Theorem~\ref{the:main}}]
\label{the:inf1}
For $d= \widetilde{\Omega}\left(\varepsilon^{-(2+2p)/{p}}\right)$ and $p$-th order smooth function, any randomized optimizer needs $\Omega(\varepsilon^{-(1+p)/p})$ sequential rounds to find $\varepsilon$-stationary points even with $\mathsf{poly}(d)$ queries per round.
\end{theorem}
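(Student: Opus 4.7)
The plan is to build a high-dimensional random hard instance by embedding the chain-structured worst-case function of \cite{carmon2020lower} into $\R^d$ via a uniformly random orthogonal transformation, and then to show that no algorithm making $\mathsf{poly}(d)$ parallel queries per round can make more than $O(1)$ chain-progress per round. Concretely, I would take the standard $p$-th order zero-chain potential $\bar f:\R^T\to\R$ with $T = c\,\varepsilon^{-(p+1)/p}$, whose construction guarantees (i) $\bar f$ has $L_p$-Lipschitz $p$-th derivative, (ii) every $\varepsilon$-stationary point must satisfy $|\langle e_i,\cdot\rangle|\geq \delta$ for all $i\leq T$, and (iii) the \emph{robust zero-chain} property: the $k$-th order tensor $\nabla^k\bar f(x)$ lies in the multilinear span of $\{e_j: j\leq \pi(x)+1\}$, where $\pi(x) = \max\{j: |\langle e_i,x\rangle|\geq \delta/2 \text{ for all }i<j\}$. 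I would then draw $U\in O(d)$ Haar-uniformly, set $f(x) = \bar f(U^\top x)$ and $v_i := U e_i$; the hardness and smoothness pass through $U$ unchanged.

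With this random instance in hand, I would run a deferred-revelation argument against an arbitrary randomized parallel algorithm. Define the round-$r$ progress $\pi_r$ as the largest $k$ such that every $v_1,\ldots,v_k$ has been ``discovered'' by the queries through round $r$. Property (iii) implies that the oracle response at any query $x$ is fully determined by the projection of $x$ onto $\mathrm{span}\{v_1,\ldots,v_{\pi_r+1}\}$, so the adversary may defer sampling the remaining columns of $U$ until the round $r+1$ queries are committed. Conditioned on the revealed subspace, $v_{\pi_r+2}$ is Haar-distributed on its orthogonal complement, of dimension at least $d-T$. For any query $x$ with $\|x\|\leq R$, Haar concentration gives
\[
 \Pr\!\left[|\langle v_{\pi_r+2},x\rangle|\geq \delta\right] \;\leq\; \exp\!\big(-\Omega(\delta^2 d/R^2)\big).
\]
The assumption $d = \widetilde\Omega(\varepsilon^{-(2+2p)/p})$ is tuned so that the right-hand side falls below any fixed inverse polynomial in $d$; a union bound over the $\mathsf{poly}(d)$ queries of round $r+1$, and over the next $O(1)$ prospective directions to absorb ``bursts'' of discovery, then yields $\pi_{r+1}-\pi_r\leq O(1)$ with high probability.

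Summing over $T/O(1)=\Omega(\varepsilon^{-(p+1)/p})$ rounds and union-bounding shows that $\pi_r < T$ throughout, so no query produces an $\varepsilon$-stationary point of $f$ with constant probability over $U$, and Yao's minimax principle converts this into the claimed randomized lower bound. The main obstacle is making the deferred-revelation argument rigorous in the presence of $p$-th order derivative responses and parallel queries: one must verify a strong enough robust zero-chain property so that a round of $\mathsf{poly}(d)$ adaptively chosen queries cannot exploit higher-order tensor outputs to ``leapfrog'' along the chain, while simultaneously ensuring the composite $f$ remains globally $p$-th order Lipschitz. This is the role of the chain-with-random-partition structure emphasized by the authors: by decoupling blocks of chain coordinates via an independent random partition, adversarial queries sitting on the boundary of several blocks can reveal at most one new direction per round, defeating what would otherwise be the natural attack against a single long chain. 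Carrying out this quantitative decoupling for $p\geq 2$ tensor oracles is where the bulk of the technical work lies.
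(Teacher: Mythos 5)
Your overall skeleton---a chain-structured hard function, a progress measure that advances by $O(1)$ per round, and concentration plus a union bound over the $\mathsf{poly}(d)$ queries---matches the paper's strategy (Lemmas~\ref{lmm:character} and~\ref{lmm:large_grad}). But the step you rely on to control progress is exactly the step the paper identifies as broken in the polynomial-query regime, and you do not repair it. You assert that ``conditioned on the revealed subspace, $v_{\pi_r+2}$ is Haar-distributed on its orthogonal complement, of dimension at least $d-T$.'' In the single-query analysis of \cite{carmon2020lower}, rotational symmetry of the undiscovered directions is established conditionally on the span of \emph{both} the discovered vectors \emph{and all past queries}; that complement has dimension $d-O(t)$ after $t$ queries, which is why their induction closes. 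With $\mathsf{poly}(d)$ queries per round the algorithm can make its queries span all of $\R^d$ in a single round, the complement becomes trivial, and the rotational-symmetry lemma yields nothing. If you instead condition only on the discovered vectors, you must first prove that the round-$(r+1)$ queries are measurable functions of those vectors alone---i.e., you need a truncated-oracle coupling and an inductive argument that the responses never leak the future directions---and you must then control the tilt that conditioning on the good event induces on the Haar law. None of this is carried out: you explicitly defer it as ``where the bulk of the technical work lies,'' and you invoke the authors' random-partition structure as the fix while your actual construction remains a Haar-random $U$ with no partition anywhere in it.

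The paper sidesteps this entirely by replacing the Haar-random frame with a uniformly random partition $\mathcal{P}=(P_1,\ldots,P_{r+2})$ of the coordinates and progress functionals $X^i(\boldsymbol{x})=\sum_{s\in P_i}\boldsymbol{x}_s/\sqrt{d_0}$. Conditioned on the discovered parts $P_1,\ldots,P_{2l}$, the remaining partition is \emph{exactly} uniform over the remaining coordinates no matter what the queries were, and concentration of linear functionals over the Boolean slice (Theorem~\ref{theo:concentration2}) supplies the $1-d^{-\omega(1)}$ bound needed to survive the union bound over $\mathsf{poly}(d)$ queries per round. This clean conditional structure is precisely what your Haar construction lacks. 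To turn your sketch into a proof you would have to either adopt the random-partition construction or supply the missing coupling-and-conditioning argument for Haar vectors; as written, the central lemma is asserted rather than proved.
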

%
%
Our result significantly strengthens the lower bound established for one-query-per-round algorithms in \citep{carmon2020lower}, even with only logarithmic scaling in the dimension dependence.
%
%
Specifically, for $p=1$, our lower bound implies that gradient descent~\citep{nesterov2012make} is adaptive optimal among all methods (even randomized, high-order, parallel methods) operating on functions with Lipschitz continuous gradient and bounded initial sub-optimality. 
Similarly, in the case $p = 2$, our result shows that the cubic regularization of Newton's method~\citep{nesterov2006cubic,cartis2010complexity} is adaptive optimal 
and for general $p$, 
the $p$-th order adaptive regularization (ARp) algorithm~\citep{birgin2017worst,cartis2020sharp,cartis2020concise} is adaptive optimal.
Consequently, our result highlights that \textit{parallelization offers no acceleration} for high-dimensional non-convex optimization.
%

%

%
Our result can be also viewed as a specific setting of stochastic non-convex optimization with zero variance, (\emph{i.e.}, $\sigma =0$). 
Combining with the lower bound for stochastic non-convex optimization with $\sigma>0$, $\Omega(\sigma^2\varepsilon^{-4})$~\citep{arjevani2023lower}, we can obtain a lower bound $\Omega(\varepsilon^{-2} + \sigma^2\varepsilon^{-4})$ matching the query complexity by stochastic gradient descent (SGD), \emph{i.e.}, SGD is also adaptive optimal for stochastic non-convex optimization.
%
We also note the construction in \citep{arjevani2023lower} cannot directly apply to our setting
(see more discussions in Section \ref{sect:technical_overview}). 
%

%
%


\paragraph{Lower and upper bounds in constant dimension.}
Following \citep{bubeck2020trap}, we study the constraint set $[0,1]^d$ ($d\geq 2$) for ease of exposition. For constant dimension $d$, the na\"ive grid search only requires a single round by querying $O(\varepsilon^{-d})$ points on an $O(\varepsilon)$-net of $[0,1]^d$. 
The first non-trivial improvement was proposed in~\citep{bubeck2019complexity}, where they developed an algorithm running within $O(\mathsf{poly}(d)\log(\frac{1}{\varepsilon}))$ iterations and issuing $O(\mathsf{poly}(d,\log(\frac{1}{\varepsilon}))\cdot \varepsilon^{-(d-1)/2})$ queries per iteration.  The number of queries per iteration has been later improved to $O(\mathsf{poly}(d)\cdot \varepsilon^{-(d-1)/2})$ by an algorithm called gradient flow parallel trap (GFPT)~\citep{hollender2023computational}. 
In this work, we bridge grid search ($1$-round) and GFPT ($\Theta(\log (\frac{1}{\varepsilon}))$-round) by establishing the following upper bound on the query complexity for constant-round algorithms.
\begin{theorem}[\bf{informal, see Theorem~\ref{the:main3}}]
\label{the:inf2}
For $d=\Theta(1)$ (with $d\geq 2$) and 
 any Lipschitz smooth function, there exists an algorithm running within $k = \Theta(1)$ sequential rounds to find $\varepsilon$-stationary points with $\varepsilon^{-\frac{d-1}{2}(1+\mathcal{O}(2^{-k}))}$ queries per iteration.
\end{theorem}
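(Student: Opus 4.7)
The plan is to construct a parameterized extension of the Gradient Flow Parallel Trap (GFPT) of~\citep{hollender2023computational}, in which the per-round shrinkage ratio of the ``trap'' box is tunable, and to optimize the resulting schedule so that the algorithm terminates in exactly $k = \Theta(1)$ rounds while achieving the stated per-round query count. I would begin by recalling the structure of GFPT: it maintains a hyperrectangle $R_i \subseteq [0,1]^d$ certified to contain an $\varepsilon$-stationary point of $f$ via a Poincar\'{e}--Miranda-type argument applied to the gradient values along the interior slabs of $R_i$. Standard GFPT shrinks $R_i$ by a constant factor per round using $\widetilde{O}(\varepsilon^{-(d-1)/2})$ queries, so it needs $\Theta(\log(1/\varepsilon))$ rounds to reach a box of side length $\varepsilon$.

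To collapse the depth to a constant $k$, I would replace the constant-factor shrinkage by an aggressive round-$i$ ratio $\rho_i = \ell_{i-1}/\ell_i \gg 1$, so that $\prod_{i=1}^k \rho_i$ reaches $1/\varepsilon$ in only $k$ rounds. Concretely, round $i$ subdivides $R_{i-1}$ into $\rho_i^d$ congruent sub-boxes, probes $\nabla f$ on the $\Theta(d\rho_i)$ interior $(d-1)$-dimensional slabs between them at an appropriate grid spacing, and selects one sub-box via the Miranda certificate. Using Lipschitz smoothness of $\nabla f$, the slab grid can be spaced at roughly $\max(\sqrt{\varepsilon},\ell_i)$, yielding a ``boundary cost'' of order $\rho_i \cdot (\ell_{i-1}/\max(\sqrt{\varepsilon},\ell_i))^{d-1}$ queries per round; the final round additionally performs a direct grid search inside $R_{k-1}$ at resolution $\varepsilon$, contributing $(\ell_{k-1}/\varepsilon)^d$ queries.

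I would then balance these costs across the $k$ rounds by solving the linear recursion in $\log \ell_i$ obtained from equating successive per-round costs. Its common ratio simplifies to $\tfrac{2d}{d+1}$, because the $\rho_i$ prefactor and the $(\ell_{i-1})^{d-1}$ slab-area factor enter with opposite signs in the balancing equation. Substituting the resulting geometric schedule back into the per-round cost produces the exponent $\tfrac{(d+1)^k}{(2d)^k - (d+1)^k}\cdot\tfrac{d-1}{2}$ stated in the theorem, and the factor $1+\mathcal{O}(2^{-k})$ in the informal statement follows by expanding $((2d)/(d+1))^{-k}$ for $k$ large (using $2d/(d+1)\ge 4/3$ for $d\ge 2$).

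The main obstacle is verifying that the trap certificate survives the aggressive shrinkage. GFPT's Poincar\'{e}--Miranda-style argument is calibrated for a constant shrinkage factor, and extending it to $\rho_i \to \infty$ requires a careful smoothness-based interpolation to argue that the per-slab query budget is sufficient even when $\ell_i$ is much larger than $\sqrt{\varepsilon}$. A secondary difficulty is optimizing over the $k$-dimensional schedule $\{\rho_i\}$: because the per-round boundary cost and the final grid-search cost scale oppositely in $\ell_k$, the optimum schedule is an interior point of the feasible region, found by equating all per-round costs and then checking that the resulting geometric progression indeed terminates at the required side length $\ell_k = \Theta(\varepsilon)$. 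Once these two points are established, the correctness of the output follows from the chain of trap certificates, and the query count is read off from the optimized schedule.
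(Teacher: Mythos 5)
Your quantitative skeleton is right: the paper's algorithm is exactly a flow-trapping scheme with a tunable per-round subdivision number $\ell_t$, the per-round cost is $\ell_t^{(d+1)/2}(r^t/\varepsilon)^{(d-1)/2}$ (your $\rho_i^{(d+1)/2}(\ell_{i-1}/\varepsilon)^{(d-1)/2}$), and balancing successive rounds gives a geometric schedule in $\log\ell_t$ with ratio $\tfrac{2d}{d+1}$ and the exponent of Theorem~\ref{the:main3}. But the correctness argument — which you yourself flag as ``the main obstacle'' — is missing, and the certificate you propose is the wrong one. GFPT, and the paper's GFGT, do not maintain the trap via a Poincar\'e--Miranda sign condition on $\nabla f$ over interior slabs; no sub-box of a general smooth $f$ need satisfy Miranda's boundary sign conditions, so that certificate cannot localize a stationary point. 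The actual certificate is $\varepsilon$-unreachability of the boundary from the current iterate, $f(\boldsymbol{y})>f(\boldsymbol{x})-\varepsilon\|\boldsymbol{x}-\boldsymbol{y}\|$ for all $\boldsymbol{y}\in\partial R$ (Lemma~\ref{lmm:trap}), which is a statement about \emph{function values}, verified on a finite $\delta_t$-net of each barrier.

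The resolution of your ``main obstacle'' is then the following three-part argument, none of which appears in your proposal. First, if some net point is $\varepsilon_t$-reachable, the new iterate is chosen as the reachable net point of \emph{minimum} $f$-value, so every other net point is automatically $\varepsilon_t$-unreachable from it, and points on the old boundary remain unreachable by the triangle inequality; this is what lets the invariant survive an arbitrarily aggressive shrinkage ratio. Second, passing from the finite net to the whole barrier costs a tolerance increment $\varepsilon_{t+1}=\varepsilon_t+O(\delta_t^2 L/\mathrm{dist}(\boldsymbol{x}^{t+1},E))$ (Lemma~\ref{lem:increase-eps}), so the compression step must keep the new iterate at least one sub-box away from the new boundary, and the spacing must be $\delta_t\sim\sqrt{\varepsilon\, r^t/\ell_t}$ — the geometric mean of $\varepsilon$ and the sub-box size, not your $\max(\sqrt{\varepsilon},\ell_i)$ — so that the $k$ increments sum to $O(\varepsilon)$. (This geometric-mean spacing is also what produces the $\ell_t^{(d+1)/2}$ cost you need; your stated spacing does not.) Third, no final grid search is needed or justified: once the box has diameter $O(\varepsilon/(\sqrt{d}L))$ and its boundary is $(\varepsilon/2)$-unreachable, the iterate itself is $\varepsilon$-stationary (Corollary~\ref{lmm:stat}). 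Without replacing the Miranda certificate by the unreachability one and supplying these three steps, the proof does not go through.
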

This is the \textit{first} result for constant-dimension and constant-iteration settings. 
For $k = 1$, our result recovers the $\varepsilon^{-d}$ complexity of grid search.
As $k$ increases, our complexity approaches $\mathcal{O}(\varepsilon^{-\frac{d-1}{2}}) $ exponentially fast in the exponent and becomes numerically close to the complexity of GFPT after a moderate number of iterations. 
Because the query complexity is non-increasing with the number of rounds, for $\omega(1)\leq k\leq \mathcal{O}(\log (\frac{1}{\varepsilon}))$, the number of required queries per round lies in $(\varepsilon^{-\frac{d-1}{2}},\varepsilon^{-\frac{d-1}{2}(1+\mathcal{O}(2^{-k}))})$ which remains $\mathsf{poly}(\frac{1}{\varepsilon})$. 
%
%

We also complement our upper bound in constant rounds by showing new lower bounds for any algorithm running within $\mathcal{O}(\log (\frac{1}{\varepsilon}))$ iterations.
\begin{theorem}[\bf{informal, see Theorem~\ref{the:main2}}]
\label{the:inf3}
For $d= \Theta(1)$ and any Lipschitz smooth function, to find $\varepsilon$-stationary points, any randomized optimizer running within $k = \mathcal{O}(\log (\frac{1}{\varepsilon}))$ needs at least $\widetilde{{\Omega}}\left(\varepsilon^{-\frac{d-1}{2} (1+\mathcal{O}(d^{-k}))}\right)$ queries per round.   
\end{theorem}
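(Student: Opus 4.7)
The plan is to apply Yao's minimax principle and construct a distribution $\mu$ over $\mathcal{O}(1)$-smooth functions on $[0,1]^d$ such that every deterministic $k$-round algorithm making fewer than $\widetilde{\Omega}(\varepsilon^{-\frac{d-1}{2}\cdot\frac{d^k}{d^k-1}})$ queries per round fails to output an $\varepsilon$-stationary point of a random $f \sim \mu$ with probability at least $1/2$. The hard distribution generalizes the single-scale gradient-flow-trap construction of \citep{bubeck2020trap,hollender2023computational} to $k$ nested scales: random parameters $\xi_1,\ldots,\xi_k$ induce a chain of nested boxes $R_0 = [0,1]^d \supset R_1 \supset \cdots \supset R_k$ with $|R_k| = \Theta(\varepsilon)$, where $R_{i+1}$ is placed on a random $(d-1)$-dimensional face inside $R_i$. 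The function $f_\xi$ is engineered so that all gradient-flow trajectories funnel sequentially $R_0 \to R_1 \to \cdots \to R_k$ and every $\varepsilon$-stationary point of $f_\xi$ lies in $R_k$.

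Given the construction, I would run a round-by-round information-leakage argument. Conditioned on the transcript $\mathcal{H}_i$ available at the start of round $i+1$, the later parameters $\xi_{i+1},\ldots,\xi_k$ remain near-uniform on their support, and round-$(i+1)$ queries are chosen non-adaptively from $\mathcal{H}_i$. The crucial estimate, borrowed from the GFPT-style analysis that underlies the single-scale $\Omega(\varepsilon^{-(d-1)/2})$ lower bound, is that a single query yields information about $\xi_{i+1}$ only when it lies in a thin ``sensitive'' region near the true position of $R_{i+1}$, whose measure conditional on $\mathcal{H}_i$ is at most $\rho_i^{-(d-1)/2}$ where $\rho_i$ is the per-level branching factor. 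A union bound over the $Q$ queries of round $i+1$ then forces $Q \geq \Omega(\rho_i^{(d-1)/2})$ at every round, on pain of the algorithm exhibiting $\Omega(1)$ failure probability at that round.

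The final step is to optimize the adversary's branching factors $\rho_i$. These are constrained by (i) the telescoping identity $\prod_i \rho_i = \Theta(1/\varepsilon)$ needed for the innermost box to realize the $\varepsilon$-scale stationary region, and (ii) the geometric compatibility between consecutive levels of the chain, in particular the fact that the $(d-1)$-face of $R_{i+1}$ must fit inside $R_i$ with enough room to host its own $d$-way sub-partition. Under these constraints the adversary's optimal schedule is a geometric sequence whose per-level ratio depends on $d$ and $k$, and a direct Lagrange-type calculation recovers the exponent $\frac{d-1}{2}\cdot\frac{d^k}{d^k-1}$. The polylogarithmic factor in $\widetilde{\Omega}$ absorbs the union bounds over queries and rounds.

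The hard part will be producing a single smooth Lipschitz function $f_\xi$ realizing the $k$-scale chain with $\mathcal{O}(1)$ smoothness constants uniformly in $k$ and $\varepsilon$, while guaranteeing $\|\nabla f_\xi(x)\| > \varepsilon$ for all $x \notin R_k$. Naively superposing $k$ single-scale traps compounds the Lipschitz constants with $k$ and can introduce spurious $\varepsilon$-stationary points where outer- and inner-trap gradients nearly cancel. I expect to use the same chain-with-random-partition technology that the paper develops for Theorem~\ref{the:main}, placing each trap on its own random orientation so that outer-trap gradients are uniformly small inside inner regions and vice versa, and tuning cutoff bump functions so that smoothness bounds and stationary-point containment are simultaneously maintained.
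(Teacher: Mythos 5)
There is a genuine gap: your proposal defers exactly the two steps that carry all the difficulty. First, the construction. You propose to realize $k$ nested trap scales inside a single $\mathcal{O}(1)$-smooth function and you yourself flag that naively superposing traps compounds the Lipschitz constant with $k$ and risks spurious $\varepsilon$-stationary points from gradient cancellation; you then say you ``expect'' the random-partition technology of the high-dimensional construction to fix this. That technology does not transfer: it relies on concentration over $d=\widetilde{\Omega}(\varepsilon^{-(2+2p)/p})$ coordinates and is useless for $d=\Theta(1)$ (the paper itself notes this in the conclusion). Second, the quantitative bound. The claim that a round-$(i{+}1)$ query is informative only on a set of conditional measure $\rho_i^{-(d-1)/2}$, and that a ``Lagrange-type calculation'' under $\prod_i\rho_i=\Theta(1/\varepsilon)$ recovers the exponent $\frac{d-1}{2}\cdot\frac{d^k}{d^k-1}$, is asserted without proof; note that a uniform geometric schedule gives only $\varepsilon^{-(d-1)/(2k)}$ per round, so the entire content of the bound lies in the optimization you did not perform. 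As written, the proposal is a research plan, not a proof.

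The paper takes a different and much lighter route that sidesteps both problems. It never builds a multi-scale smooth function. Instead it uses a \emph{round-preserving reduction} (Lemma~\ref{lemma:reduction_monotone}) from finding an $\varepsilon$-stationary point to finding the local minimum of a monotone path function on the grid graph $G_{n,d}$ with $n(\varepsilon)\le 1/\sqrt{d\varepsilon}$: the smooth function $\widehat{P}$ of Lemma~\ref{lemma:vav_const} (the Vavasis gadget) is built at a single scale, is $\mathcal{O}(1)$-smooth, and has all its $\varepsilon$-stationary points in the terminal cell of the path. All of the multi-round hardness then lives in the purely combinatorial random-staircase lower bound of \citet{branzei2022query} (Lemma~\ref{lemma:lower_bound_monotone_path}), which already yields that $k$-round algorithms with $n^{(d^{k+1}-d^k)/(d^k-1)}/(20dk)$ queries per round fail with probability $7/40$; substituting $n\approx\varepsilon^{-1/2}$ gives the exponent. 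If you want to pursue your direct continuous construction, you would need to supply the missing smooth multi-scale potential and redo the staircase-style counting in the continuum; the reduction-based argument is the standard way to avoid both.
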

%
This lower bound asymptotically matches convergence rate of our algorithm as iteration $k\to \infty$.
%
%
Our established lower bound also matches the upper bound of GFPT~\citep{hollender2023computational} with $\Theta(\log(\frac{1}{\varepsilon}))$ iterations.
Our results partially address the question 3 in \citep{bubeck2020trap} by showing $\mathsf{poly}(\frac{1}{\varepsilon})$ complexity of $\mathcal{O}(\log (\frac{1}{\varepsilon}))$-iterations algorithm, \emph{i.e.}, the low-depth region is at least $\Omega(\log (\frac{1}{\varepsilon}))$.

\subsection{Technical overview}
\label{sect:technical_overview}

In this section we summarize the main technical ideas used to prove our lower bounds.  For details, see Section \ref{sec:high} for Theorem \ref{the:inf1} and Section \ref{sec:constant} for Theorem \ref{the:inf2}. 
Notably, the high-dimensional lower bound relies on concentration inequalities specific to high dimensions, whereas the constant-dimensional lower bound is based on a reduction whose cost remains constant only when the dimension itself is fixed.

\subsubsection{High-dimensional case}
\label{sec:tech1}
Existing works~\citep{woodworth2017lower,carmon2020lower,carmon2021lower,yue2023lower,kwon2024complexity,zhang2022lower,arjevani2023lower} that proved lower bounds for continuous optimization are based on a family of hard functions, parameterized by random orthogonal vectors $(\boldsymbol{u}^1,\ldots,\boldsymbol{u}^r)$, where $r$ is the number of rounds. 
%
A crucial characteristic of such construction is that no algorithm can access information about the remaining vectors $\boldsymbol{u}^{i+1},\ldots,\boldsymbol{u}^r$ at round $i$, with high probability.
Moreover, obtaining an approximate stationary point critically requires the knowledge of the final vector $\boldsymbol{u}^r$.
%
%

To derive our adaptive lower bounds, we follow a similar chaining-like construction. Nonetheless, compared to prior works, we design a family of hard functions  against a \emph{polynomial} number of parallel queries, which is the main technical innovation of this paper. 
Our new construction is based on \emph{random partition} used in exploring adaptive complexity of log-concave sampling and submodular optimization~\citep{zhou2024adaptive,chakrabarty2022improved,li2020polynomial}.
Such random partition based chaining structure allows us to simplify the proof and elegantly establish the adaptive complexity for non-convex optimization.

\paragraph{Prior arguement fails under polynomial queries.}
We first revisit the construction in \citep{carmon2020lower} and highlight why their approach fails to address cases involving a polynomial number of queries.
Inspired by chaining-like quadratic functions for smooth convex optimization~\citep{nesterov2013introductory}, their work designed a robust chaining-like function parameterized by an orthogonal matrix $\boldsymbol U= (\boldsymbol{u}^1,\ldots,\boldsymbol{u}^r)$
such that (i) $\left\|\nabla f(\boldsymbol U\boldsymbol{x})\right\|$ is non-vanishing unless $|\langle \boldsymbol{u}^i,\boldsymbol{x}\rangle|>1$ for all $i\in \{ 1,2,...,r\}$; 
(ii) the gradient   $\nabla f(\boldsymbol U\boldsymbol{x})$ will be independent of $\boldsymbol{u}^{i+1},\ldots,\boldsymbol{u}^r$ if $|\langle \boldsymbol{u}^j,\boldsymbol{x}\rangle|<1/2$, for $j\geq i+1$; (iii) without $\boldsymbol{u}^i$ being discovered, $|\langle \boldsymbol{u}^i,\boldsymbol{x}\rangle|<1/2$ with high probability.
Thus, any algorithms can discover at most one random vector in each iteration, and thus cannot reach an approximate stationary point without iterating $r$ times.

Properties (i) and (ii) rely on the inherent structure of the chaining-like function, while the remaining objective is to establish property (iii) using random orthogonal vectors.
To prove (iii), they analyzed the behavior at iteration $t$. Let $\widehat{\boldsymbol{u}}^j$ denote the projection of the future vector $\boldsymbol{u}^j$ with $j>t$ onto the orthogonal complement of the space $\mathrm{span}\{\boldsymbol{u}^1, \ldots, \boldsymbol{u}^t, \boldsymbol{x}^1, \ldots, \boldsymbol{x}^t\}$, where $\boldsymbol{x}^1, ..., \boldsymbol{x}^t$ denote the iterates up to $t$-th iteration. 
They showed that, conditioned on $\boldsymbol{u}^1, \ldots, \boldsymbol{u}^t$, the random variable $\widehat{\boldsymbol{u}}^j$ follows a rotationally symmetric distribution and is independent of $\boldsymbol{x}^{t+1}$.  
Using a subspace concentration and an inductive argument \citep{ball1997elementary}, they verified that property (iii) holds.
However, this relies on the projection, and when a set of queries is issued as a basis for the entire space (which can be done with polynomial queries), the argument fails to hold.

\paragraph{Our construction by random partition.} 
Instead of using random projections on the space spanned by the discovered random orthogonal vectors and queried vectors, we adopt a specific random projection parameterized by a random partition over all coordinates.
Specifically, we consider random partitions $\mathcal{P} = (P_1,\ldots,P_{r+1})$ of $[d]$, and define the sum of each part $X^i(\boldsymbol{x}) = \sum_{j\in P_i} \boldsymbol{x}_j$ for query $\boldsymbol{x}$ (where we use $\boldsymbol{a}_i$ to denote the $i$th entry of $\boldsymbol{a}$). 
%
With such random partitions, we construct a similar chaining-like function $f_\mathcal{P}$ with the properties: (a) $\left\|\nabla f_\mathcal{P}(\boldsymbol{x})\right\|$ will not vanish unless $|X^i(\boldsymbol{x})-X^{i+1}(\boldsymbol{x})|>1$ for all $i\in [r]$~(Lemma~\ref{lmm:large_grad}), and (b) the gradient will be independent of $P_{i+1},\ldots,P_r$ if $|X^j(\boldsymbol{x})-X^{j-1}(\boldsymbol{x})|<1/2$, for $j\geq i$~(Lemma~\ref{lmm:character}).
Finally, the key property (c) is that without identifying the elements in $P_i$ and $P_{i+1}$, the value $|X^i(\boldsymbol{x}) - X^{i+1}(\boldsymbol{x})|<1/2$ with a probability $1-d^{-\omega(1)}$.
With such a high probability, even with polynomial times of ``shots in the dark'', the algorithm cannot process information of the future part of the partition.
Thus, any parallel algorithm can only learn one part of the random partition with high probability.
To prove property (c), we utilize the concentration bound of conditional Bernoulli random variables~(Theorem~\ref{theo:concentration2}).



\paragraph{Comparison with construction for stochastic settings.}
The only existing lower bound for non-convex optimization with a large batch of queries pertains to the stochastic case.
%
\citep{arjevani2023lower} utilized the same chaining-like function and perturbed the noiseless gradient oracle to a stochastic ``zero-chaining'' gradient oracle such that any algorithm can recover information about the future orthogonal vector $\boldsymbol{u}^j$ from the noisy oracle at iteration $t<j$ with an exponentially small probability.
Furthermore, they effectively controlled the variance associated with the stochastic gradient oracle.
However, the noise in the stochastic oracle is crucial for obscuring information, making it inapplicable to our noiseless case.

\subsubsection{Trap the flow within a constant number of iterations in constant-dimensional space}
\label{sec:tech2}
Previous algorithms for finding stationary points in constant-dimensional spaces~\citep{bubeck2019complexity,hollender2023computational} employed a similar flow-trapping framework.  
At a high level, in each iteration, these algorithms identify a search domain containing an $\varepsilon$-stationary point and reduce either the volume or the diameter of the domain by a constant factor.  
By the Lipschitz property of the gradient, if the diameter of the domain is $O(\varepsilon)$, an $\varepsilon$-stationary point can be located within it.  
Moreover, the constant reduction in diameter ensures that $\Theta(\log(1/\varepsilon))$ iterations are sufficient to achieve the desired accuracy.
Our algorithm~(Algorithm~\ref{alg:main}) also proceeds with a similar identification-compression framework.
However, to reduce the volume to $O(\varepsilon)$ in $\Theta(1)$ iterations, we design a new algorithm with more ``traps'' in each iteration to identify much smaller domains, which is one of our main technical contributions.
\begin{figure}[t]
    \centering
    \subfigure[]{
        \includegraphics[width=0.23\textwidth]{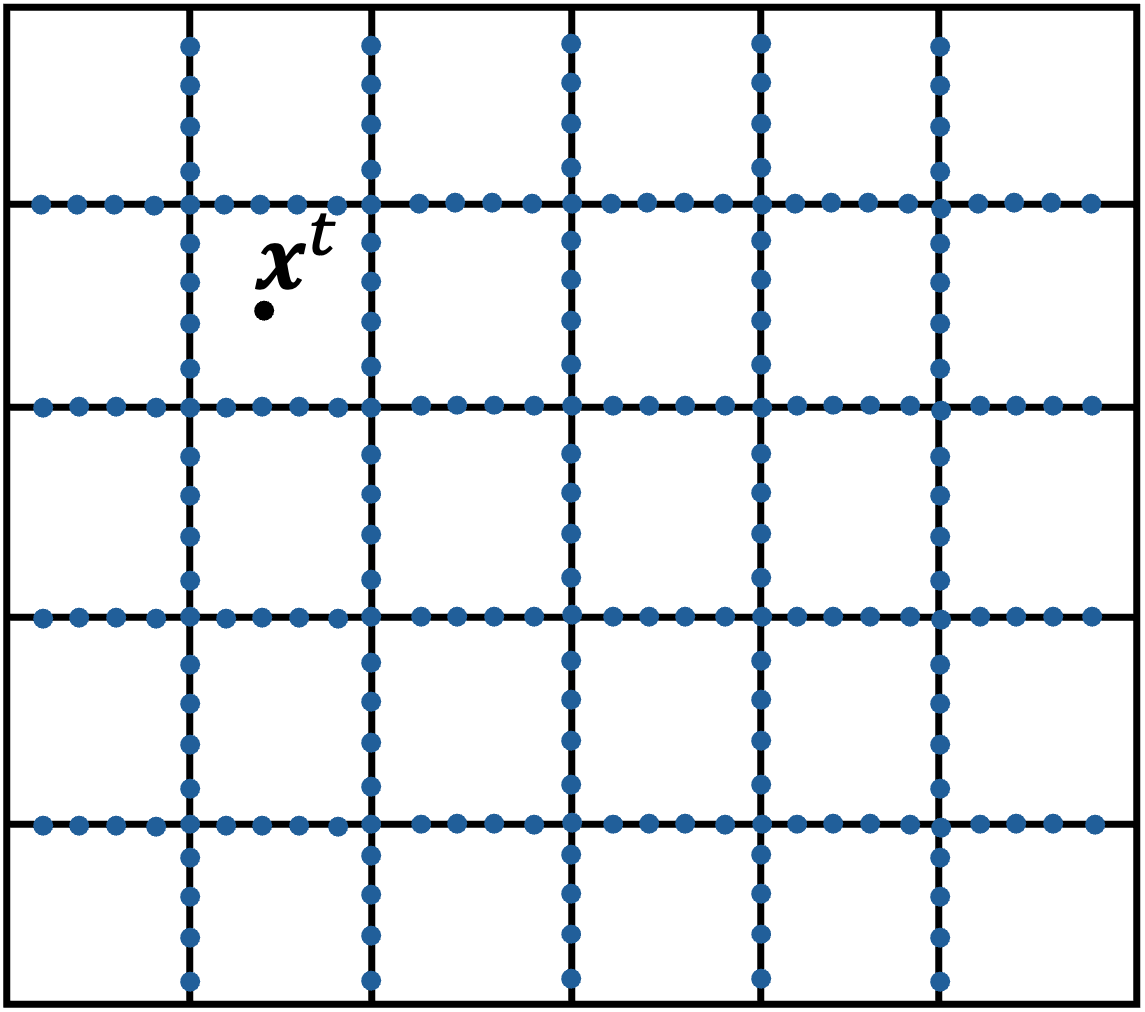}
    }
    \subfigure[]{
        \includegraphics[width=0.23\textwidth]{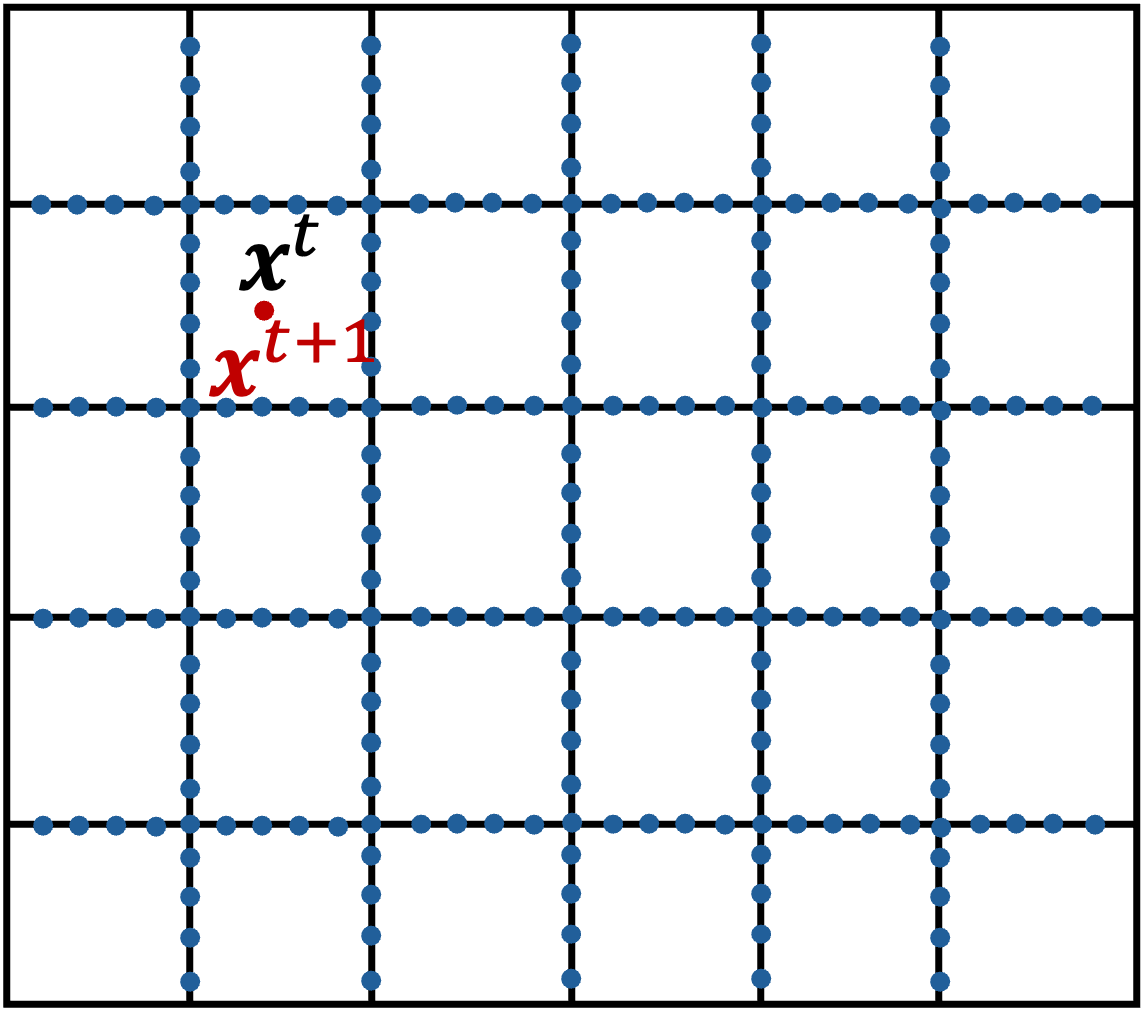}
    }
    \subfigure[]{
        \includegraphics[width=0.23\textwidth]{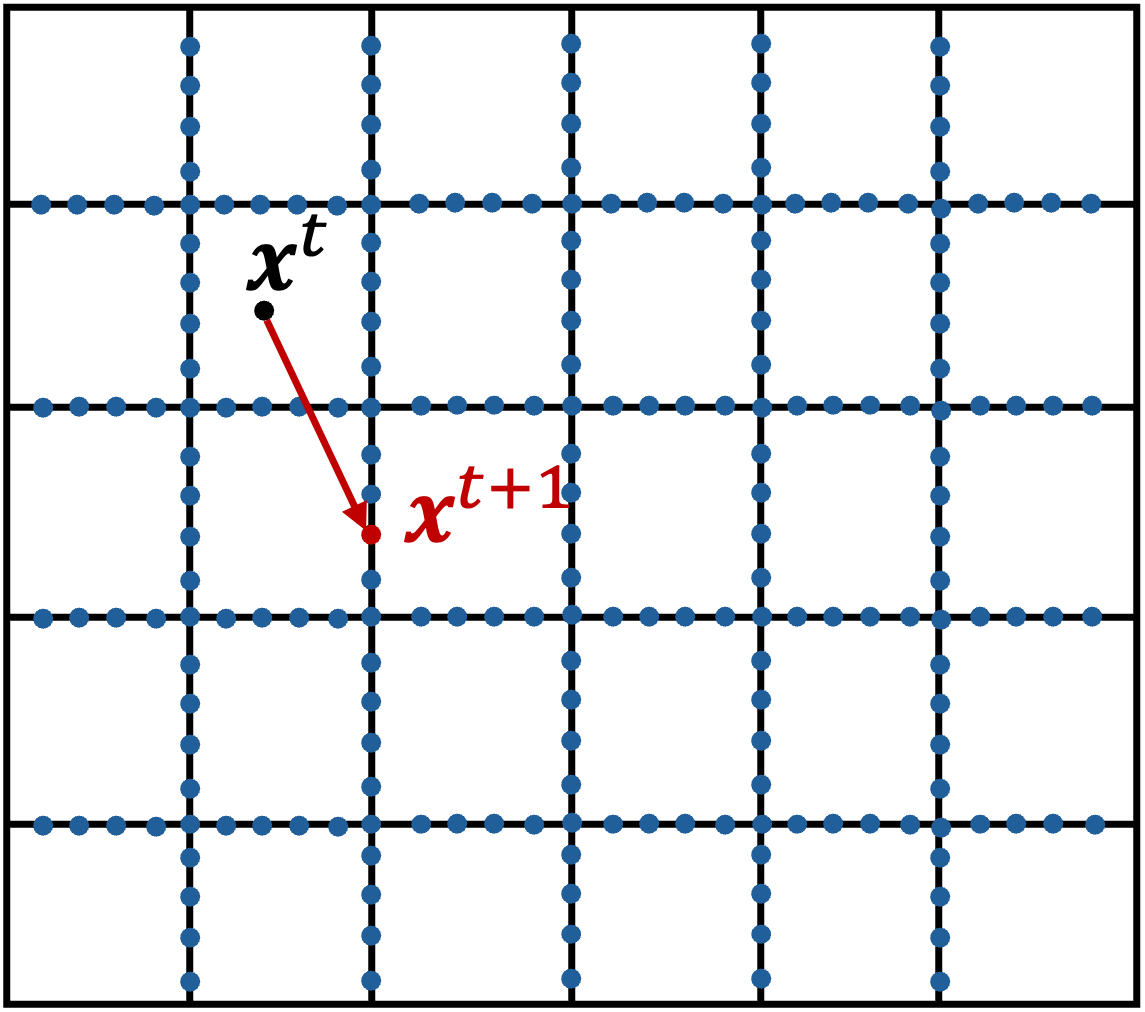}
    }
    \caption{Illustration of \textbf{iterate update} on 2-dimensional space. We plot trap barriers as black grids and queried points on the trap barrier as blue dots. (\emph{a}) Current iterate location. (\emph{b}) If all the queried points are $\varepsilon_t$-unreachable from $\boldsymbol x_t$, then $\boldsymbol x^{t+1} = \boldsymbol x^t$. (\emph{c}) If some queried points are $\varepsilon_t$-reachable, we select $\boldsymbol{x}^{t+1}$ as the point with smallest function value.}
        \label{fig:1}
    \vspace{-0.2cm}
\end{figure}
\paragraph{Boundary unreachability implies stationary point inside.}
%
We say a point $\boldsymbol{y}$ is $\varepsilon$-unreachable from a point $\boldsymbol{x}$ if 
$f(\boldsymbol{y})> f(\boldsymbol{x}) - \varepsilon\left\|\boldsymbol{x}-\boldsymbol{y}\right\|.$
By the mean value theorem, if $\boldsymbol{y}$ is $\varepsilon$-unreachable from $\boldsymbol{x}$, then the gradient flow starting from $\boldsymbol{x}$ and following the steepest descent direction cannot reach $\boldsymbol{y}$, unless it encounters an $\varepsilon$-stationary point along the path.
Conversely, if all boundary points of a domain is $\varepsilon$-unreachable, this domain must contain an $\varepsilon$-stationary point~(Lemma~\ref{lmm:trap}).
With such  a property, if we can compress the region such that the diameter is $O(\varepsilon)$, we can find an $\varepsilon$-stationary point.

\paragraph{Our method: Gradient Flow Grid Trapping~(GFGT).}
The key technical challenge  is to ensure the boundary  remain $\varepsilon$-unreachable while compressing the search domain within a constant number of iterations. 
Inspired by the gradient flow trapping algorithm \citep{bubeck2020trap,hollender2023computational}, we try to trap the flow but with more barriers in each iteration, as shown in Algorithm~\ref{alg:main}.
At a high-level, at iteration $t$, the algorithm first sets $\ell_t$ equally-spaced barriers on every $d$ dimensions and queries a $\delta_t$-net on every barrier. If no queried point is $\varepsilon_t$-reachable from current iterate $\boldsymbol{x}^t$, we do not update $\boldsymbol{x}^t$ and set $\boldsymbol{x}^{t+1} = \boldsymbol{x}^t$. Otherwise, we set $\boldsymbol{x}^{t+1}$ to the reachable point with smallest objective value among queried points.
We illustrate such an identification process in Figure~\ref{fig:1}.
Subsequently, we compress the search domain centered around $\boldsymbol{x}^{t+1}$, while ensuring the boundary remains unreachable from the  $\boldsymbol{x}^{t+1}$. This compression process is visualized in Figure~\ref{fig:2}. Such a construction allows the diameter of the hyperrectangle is reduced by at least $3/\ell_t$. 
Thus, it is sufficient to set $\ell_0\times\cdots\times\ell_k = O(1/\varepsilon)$ to ensure that the diameter is bounded by $O(\varepsilon)$ with $k = \Theta(1)$ number of iterations.

\begin{figure}[t]
    \centering
    \subfigure[]{
        \includegraphics[width=0.228\textwidth]{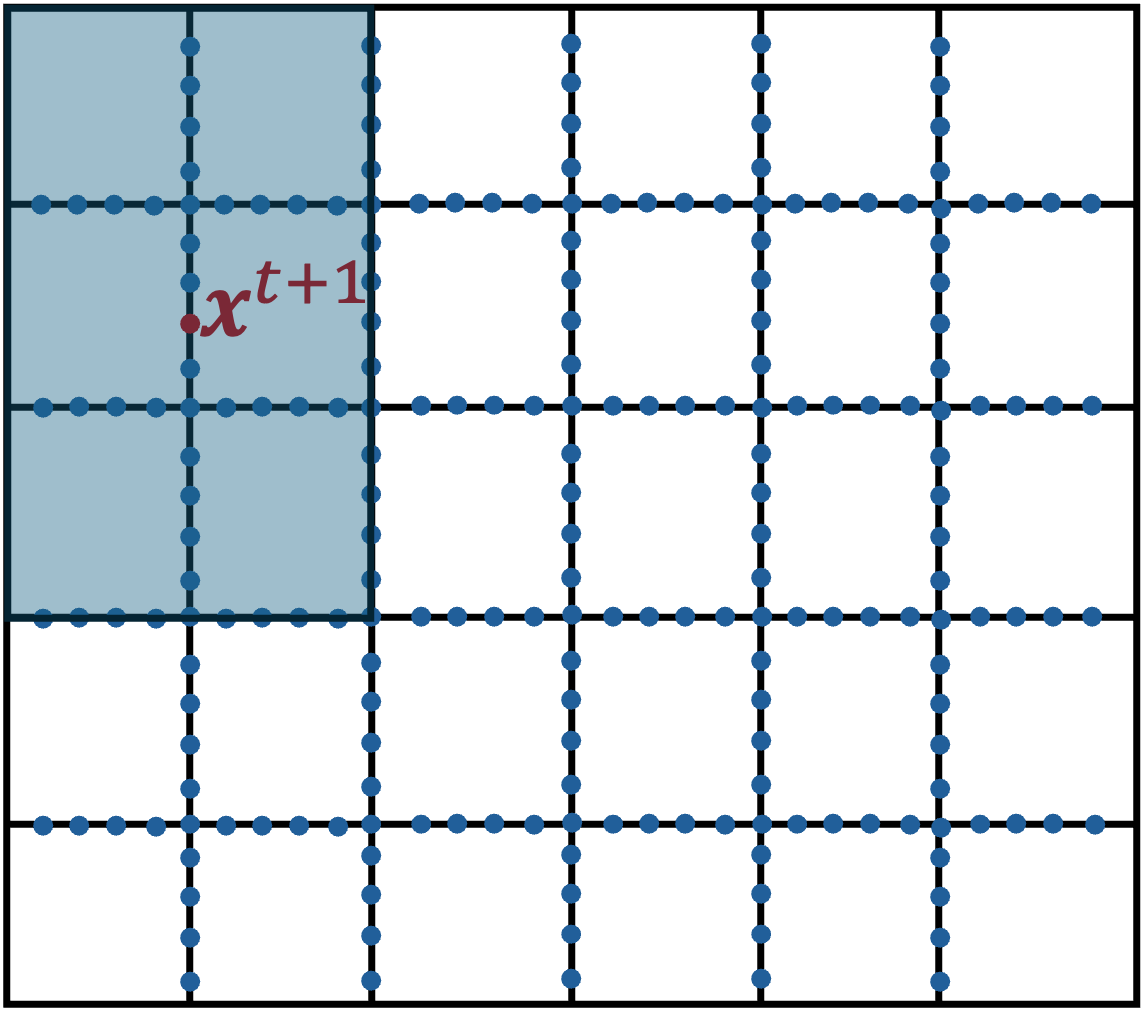}
    }
    \subfigure[]{
        \includegraphics[width=0.228\textwidth]{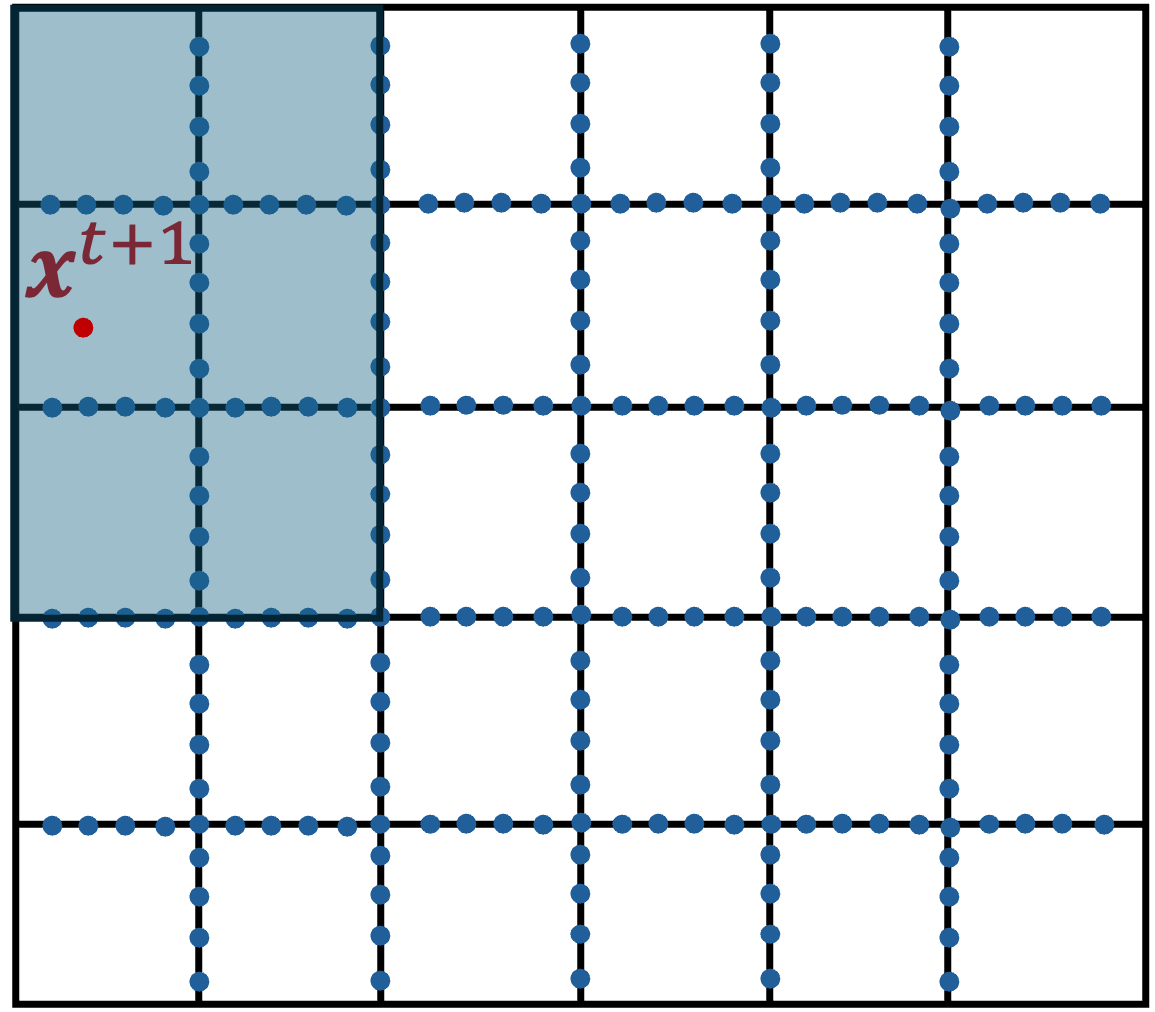}
    }
    \subfigure[]{
        \includegraphics[width=0.228\textwidth]{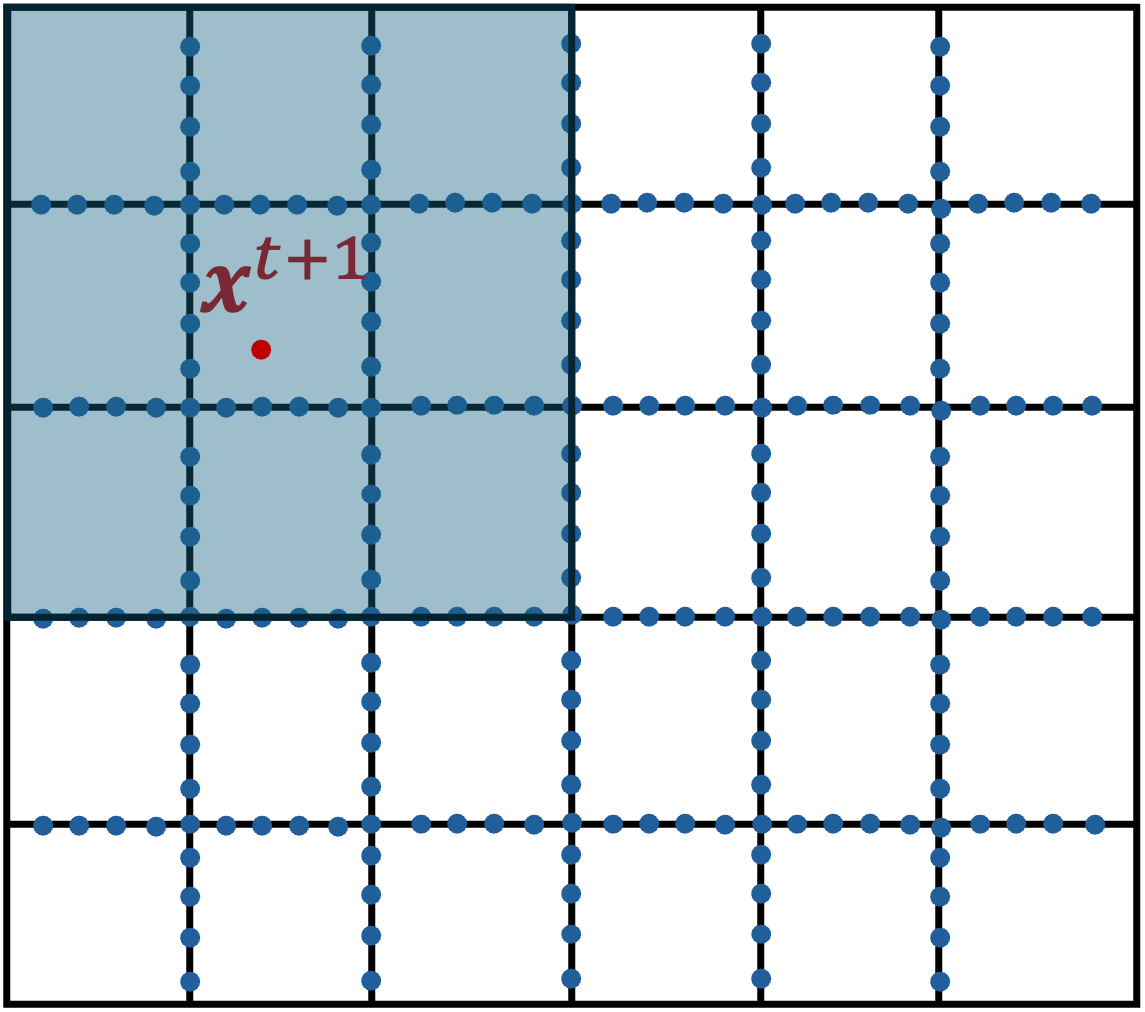}
    }
    \subfigure[]{
        \includegraphics[width=0.228\textwidth]{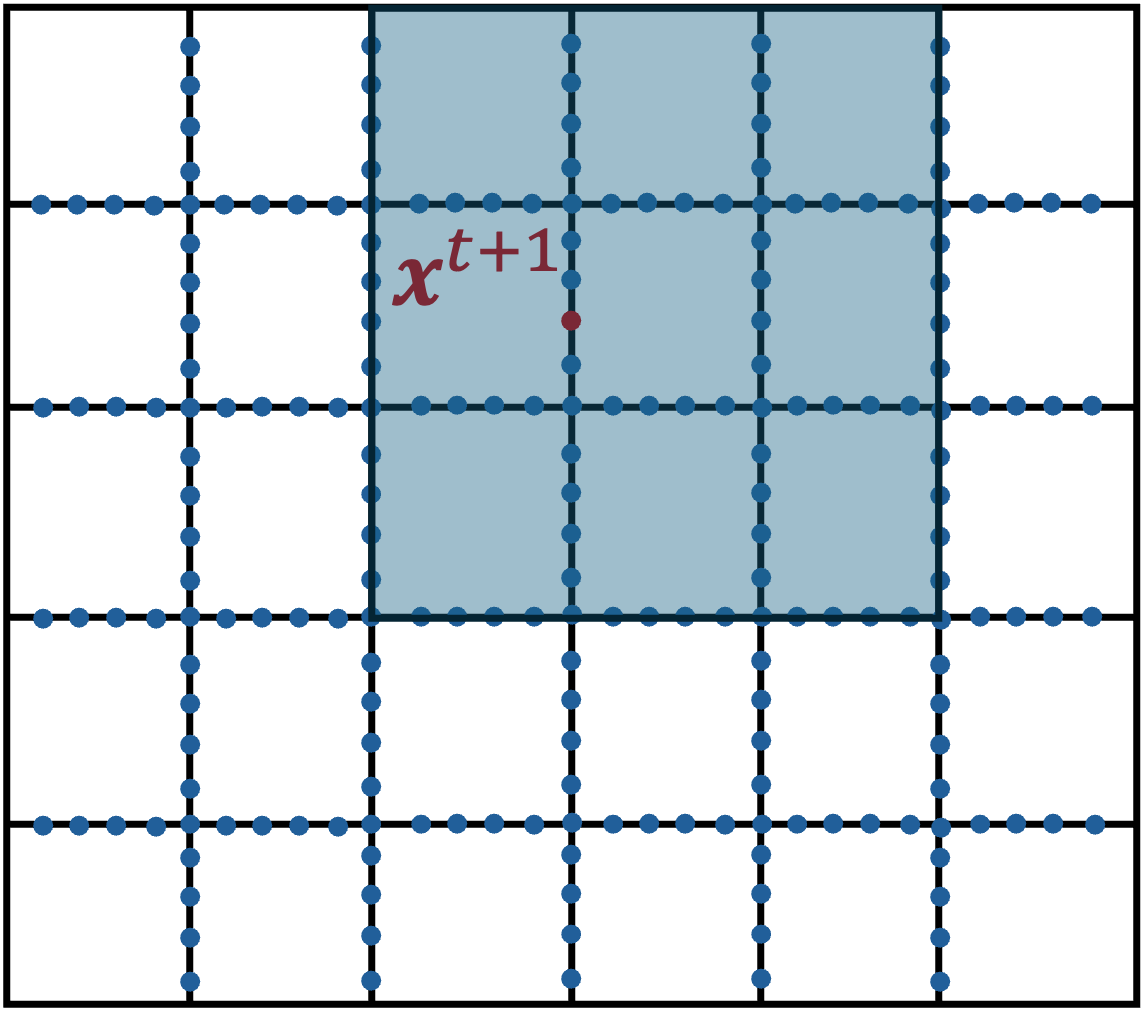}
    }
    \caption{Illustration of \textbf{domain compression} on 2-dimensional space depending on the location of $\boldsymbol{x}^{t+1}$. The shaded areas represent the compressed domain for the next iteration. (\emph{a,b}) If $\boldsymbol{x}^{t+1}$ is close to a boundary (in this case, the left boundary), the domain is extended only to the other directions (top, bottom, right). (\emph{c,d}) If $\boldsymbol{x}^{t+1}$ is not close to any boundary, the domain is extended for to directions, depending on whether $\boldsymbol{x}^{t+1}$ is on the trap barrier.}
        \label{fig:2}
    \vspace{-0.2cm}
\end{figure}

\paragraph{Boundary unreachability during our algorithm.} 
Let $R_{t+1}$ denote the new hyperrectangle at the start of $(t+1)$-th iteration.
We make the following two key observations regarding the iterate $\boldsymbol{x}^{t+1}$ and the hyperrectangle $R_{t+1}$. {(a)} any point on $\partial R_{t+1}$ either belongs to a barrier controlled by a $\delta_t$-net where every point has been queried or lies on the previous boundary $\partial R_t$ and {(b)} the solution $\boldsymbol{x}^{t+1}$ is always sufficiently far  from the boundary $\partial R_{t+1}$.
To leverage the finite $\delta_t$-net in controlling an infinite number of points on $\partial R_{t+1}$, we slightly increase the accuracy $\varepsilon_{t+1} = \varepsilon_t + O(\varepsilon)$. We analyze the unreachability of boundary points under the following scenarios.
\begin{itemize}[leftmargin=0.2in]
    \vspace{-0.05in}
    \item If the point lies in the intersection $\partial R_t\cap \partial R_{t+1}$, we establish $\varepsilon_{t+1}$-unreachability by the triangle inequality, since any point on $\partial R_t$ is already $\varepsilon_t$-unreachable. 
    \vspace{-0.1in}
    \item If the point lies on the new boundary, it can be controlled by a $\delta_t$-net where every point has been queried.
    \vspace{-0.1in}
    \begin{enumerate}[leftmargin=0.2in]
        \item If the iterate is not updated, we show $\varepsilon_{t+1}$-unreachability by Lemma~\ref{lem:increase-eps} and Observation (b). 
    \vspace{-0.08in}
    \item If the iterate is updated to the minimal-value one among reachable all queried points, then the other queried point must be $\varepsilon_t$-unreachable from the minimal-value solution. Then we apply the same argument to show the $\varepsilon_{t+1}$-unreachability.
    \end{enumerate}
\end{itemize}

\paragraph{Comparison with constant-iteration algorithms for local minimum search on grid graphs.} 
Since the problem of local minimum search on grid graphs can be reduced to finding a stationary point~\citep{bubeck2020trap, vavasis1993black}, constant-iteration algorithms for local minimum search on grid graphs share similar structural features.
In particular, these algorithms query \emph{all} points on the boundary of a subregion to progressively trap the local minimum within a significantly shrinking region. They then proceed to query {all} points within the final subregion.
However, to trap a stationary point, one must ensure that all boundary points are {unreachable}.
Since directly querying all points on the boundary of a sub-region would require infinitely many queries, we instead adopt the approach of \citep{hollender2023computational}, which ensures boundary unreachable by placing the next center point sufficiently far from the boundary. To implement this, we query a $\delta$-net with spacing smaller than $\varepsilon$, thereby ensuring the stationary point remains unreachable via any finite number of queries.

\paragraph{Constant-dimensional lower bound.}

To analyze the constant-dimensional case, we reduce the problem to local search on grid graphs and leverage the hardness of random staircase construction. Specifically, we show that the problem of finding stationary points can be reduced to locating the local minimum of a monotone path function on a grid graph, following \citep{bubeck2020trap,vavasis1993black}. A smooth function is constructed to preserve the path's monotonicity, ensuring the stationary point lies near the local minima. Notably, such a reduction is round-preserving. The adaptive complexity is analyzed using random staircase construction \citep{branzei2022query}, where the staircase grows randomly at each step, restricting deterministic algorithms from accessing points beyond the current round with constant probability. 
We demonstrate that these hard instances hold for any algorithm running within $\Theta(\log(1/\varepsilon))$ rounds.









\section{Preliminaries}
\label{sec:pre}

Given a smooth objective function $f:\mathbb{R}^d\to \mathbb{R}$,  the goal of non-convex optimization is to find a (possibly random) stationary point $\boldsymbol{x}\in \mathbb{R}^d$ such that $\mathbb{E}\left[\left\|\nabla f (\boldsymbol{x})\right\|\right]\leq \varepsilon$. We further assume the initial point $\boldsymbol{x}^0$ satisfies $f(\boldsymbol{x}^0) - \inf_{\boldsymbol{x}} f(\boldsymbol{x})\leq \Delta$ for some $\Delta\geq 0$. In the constant dimensional setting, we also consider the cube-constrained minimization where $f$ is restricted to $[0,1]^d$. In this case, we focus on finding an $\varepsilon$-KKT point, 
\emph{i.e.}, the norm of the projected gradient instead of the gradient being bounded by $\varepsilon$ \citep{hollender2023computational}.

\paragraph{Function class.} A function $f:\R^d \to \R$ has $L_p$-Lipschitz $p$-th order derivatives if it is $p$ times continuously differentiable, and 
for every $\boldsymbol{x} \in \R^d$ and $\boldsymbol{v} \in \R^d$, $\left\|{\boldsymbol{v}}\right\| = 1$, the directional projection
$t \mapsto f_{\boldsymbol{x},\boldsymbol{v}}(t) := f(\boldsymbol{x} + t \cdot \boldsymbol{v})$ of $f$
satisfies
$
  \left| f_{\boldsymbol{x},\boldsymbol{v}}^{(p)}(t) - f_{\boldsymbol{x},\boldsymbol{v}}^{(p)}(t') \right|
  \le L_p \left|t - t'\right|
  ~~ \mbox{for~} t, t' \in \R,
$
where $f_{\boldsymbol{x},\boldsymbol{v}}^{(p)}(\cdot)$ is the $p$th
derivative of $t \mapsto f_{\boldsymbol{x},\boldsymbol{v}}(t)$. We occasionally refer to a function with Lipschitz continuous  $p$-th order derivatives as $p$-th order smooth and denote the class of such functions by $\mathcal{F}_p(L_p)$.

\paragraph{Oracle.}
In this work, we investigate the model where the algorithm queries points to the oracle $\mathcal{O}$.
Given the potential function $f$, and a query $\boldsymbol{x} \in \mathbb{R}^d$, the $0$-th order oracle answers the function value $f(\boldsymbol{x})$ and the $p$-th order oracle answers both $f(\boldsymbol{x})$ and its $i$-th order derivative $\nabla^i f(\boldsymbol{x})$, for any $i\in [p]$.
%
%
In the following, we focus solely on the zeroth-order oracle, as higher-order oracles can be constructed using $\mathsf{poly}(d)$ queries to the zeroth-order oracle.


\paragraph{Adaptive algorithm class.} 
The class of \emph{adaptive} algorithms is formally defined as follows~\citep{diakonikolas2019lower,zhou2024adaptive}.
For any dimension $d$, an adaptive algorithm $\mathsf{A}$ takes $f:\mathbb{R}^d\to \mathbb{R}$ and a (possibly random) initial point $\boldsymbol{x}^0$ and iteration number $r$ as input
and returns an output $\boldsymbol{x}^{r+1}$, which is denoted as $\mathsf{A}[f,\boldsymbol{x}^0,r] = \boldsymbol{x}^{r+1}$.
At iteration $i\in [r]:=\{1,\ldots,r\}$, $\mathsf{A}$ performs a batch of queries
\[Q^i=\{\boldsymbol{x}^{i,1},\ldots,\boldsymbol{x}^{i,k_i}\}, ~~\mbox{ with } \boldsymbol{x}^{i,j}\in \mathbb{R}^d,~~ j\in [k_i], ~~k_i = \mathsf{poly}\big(d,1/\varepsilon\big) ,\]
%
%
Given queries set {$Q^i$}, the oracle returns a batch of answers: {$\mathcal{O}(Q^i)  = \{\mathcal{O}(\boldsymbol{x}^{i,1}),\ldots,\mathcal{O}(\boldsymbol{x}^{i,{k_i}})\}.$}

An adaptive algorithm $\mathsf{A}$ is \emph{deterministic} if in every iteration $i\in \{0,\ldots,r\}$, $\mathsf{A}$ operates with the form $Q^{i+1} =\mathsf{A}^i(Q^0,\mathcal{O}(Q^0),\ldots,Q^{i},\mathcal{O}(Q^{i})),$
where $\mathsf{A}^i$ is mapping into $\mathbb{R}^{dk_{i+1}}$ with $Q^{r+1} = \boldsymbol{x}^{r+1}$ as output and $Q^0 =\boldsymbol{x}^0$ as an initial point.
We denote the class of adaptive deterministic algorithms by $\mathcal{A}_{\text{det}}$.
An adaptive \emph{randomized} algorithm has the form $Q^{i+1} =\mathsf{A}^i(\xi_i,Q^0,\mathcal{O}(Q^0),\ldots,Q^{i},\mathcal{O}(Q^{i})), $
{with} access to a {uniform random variable} on $[0, 1]$ (\emph{i.e.}, infinitely many random bits), where $\mathsf{A}^i$ is mapping into $\mathbb{R}^{dk_{i+1}}$.
We denote the class of adaptive randomized algorithms by $\mathcal{A}_{\text{rand}}$.


\paragraph{Notion of complexity}
Given $\varepsilon > 0$, $f\in \mathcal{F}$, and some algorithm $\mathsf{A}$, 
define the running iteration 
$\mathsf{T}(\mathsf{A}, f,\boldsymbol{x}^0,\varepsilon)$ as the minimum number of rounds such that algorithm $\mathsf{A}$ outputs a solution $\boldsymbol{x}$ that satisfies $\mathbb{E}\left[\left\|\nabla f (\boldsymbol{x})\right\|\right]\leq \varepsilon$, \emph{i.e.}, $\mathsf{T}(\mathsf{A}, f,\boldsymbol{x}^0,\varepsilon) = \inf\left\{t: \mathbb{E}\left[\left\|\nabla f \left(\mathsf{A}[f,\boldsymbol{x}^0,t]\right)\right\|\right]\leq \varepsilon \right\}$.
We define the \emph{worst case} complexity as
\[\mathsf{Comp}_{\mathsf{WC}}(\mathcal{F},\varepsilon,\boldsymbol{x}^0) :=  \inf\nolimits_{\mathsf{A}\in \mathcal{A}_{\text{det}}} \sup\nolimits_{f\in \mathcal{F}}\mathsf{T}(\mathsf{A}, f,\boldsymbol{x}^0,\varepsilon).\]
For some randomized algorithm $\mathsf{A}\in \mathcal{A}_{\mathrm{rand}}$, we define the \emph{randomized} complexity as
\[\mathsf{Comp}_{\mathsf{R}}(\mathcal{F},\varepsilon,\boldsymbol{x}^0) :=  \inf\nolimits_{\mathsf{A}\in \mathcal{A}_{\text{rand}}} \sup\nolimits_{f\in \mathcal{F}}\mathsf{T}(\mathsf{A}, f,\boldsymbol{x}^0,\varepsilon).\]
By definition, we have $\mathsf{Comp}_{\mathsf{WC}}(\mathcal{F},\varepsilon,\boldsymbol{x}^0) \geq  \mathsf{Comp}_{\mathsf{R}}(\mathcal{F},\varepsilon,\boldsymbol{x}^0).$
 In the rest of this paper, we only consider the randomized complexity and we lower-bound it by considering the \emph{distributional} complexity:
\[\mathsf{Comp}_{\mathsf{D}}(\mathcal{F},\varepsilon,\boldsymbol{x}^0) :=  \sup\nolimits_{F\in \Delta(\mathcal{F})}\inf\nolimits_{\mathsf{A}\in \mathcal{A}_{\text{rand}}} \mathop{\mathbb{E}}\nolimits_{f\sim F}\mathsf{T}(\mathsf{A}, f,\boldsymbol{x}^0,\varepsilon),\] 
where $\Delta(\mathcal{F})$ is the set of probability distributions over the class of functions $\mathcal{F}$.


\section{High dimensional lower bounds for $p$-th order methods}
\label{sec:high}

In this section, we prove the adaptive lower bound for $d = \widetilde{\Omega}\big(\varepsilon^{-\frac{2+2p}{p}}\big)$~(Theorem \ref{the:main}), where the number of queries per iteration is dominated by $\mathsf{poly}(d)$.
%
We first present our hard functions family and analyze their properties in Section \ref{sec:hard1}. 
We characterize the output for any algorithm and give proof of Theorem \ref{the:main} in Section~\ref{sec:char} .
%

\begin{theorem}
\label{the:main}
There exist numerical constants $0 < c_0, c_1,c_2 < \infty$
  such that  for all $p \ge 1, p \in \mathbb{N}$,
  and let $\Delta$, $L_p$, $\varepsilon>0$ and $d$ with $\frac{d}{\log^2 d}\geq c_2\big(\frac{L_p}{l_p}\big)^{2/p} \varepsilon^{-\frac{2+2p}{p}}$, we have
  \begin{equation*}
    \mathsf{Comp}_{\mathsf{R}}(\mathcal{F}_p(L_p),\varepsilon,\Delta)\geq c_0\cdot \Delta \bigg(\frac{L_p}{l_p}\bigg)^{1/p} \varepsilon^{-\frac{1+p}{p}},
  \end{equation*}
  where $l_p \le e^{c_1 p \log p + c_1}$.
\end{theorem}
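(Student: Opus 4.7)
The plan is to instantiate the random-partition chaining strategy outlined in Section~\ref{sec:tech1} and convert it into a distributional lower bound via Yao's minimax principle. Concretely, I would set $r = c_0 \Delta (L_p/l_p)^{1/p}\varepsilon^{-(1+p)/p}$, draw a random ordered partition $\mathcal{P}=(P_1,\ldots,P_{r+1})$ of $[d]$ into approximately equal-sized parts of size $d/(r+1)$, and define the hard potential $f_{\mathcal{P}}(\boldsymbol{x})$ as a properly scaled $p$-th order chaining function in the variables $X^i(\boldsymbol{x})=\sum_{j\in P_i}\boldsymbol{x}_j$ (the analogue in our setting of the inner products $\langle \boldsymbol{u}^i,\boldsymbol{x}\rangle$ used by \citep{carmon2020lower}). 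The scaling will be chosen so that $f_{\mathcal{P}}\in\mathcal{F}_p(L_p)$ and $f_{\mathcal{P}}(\boldsymbol{x}^0)-\inf f_{\mathcal{P}}\le \Delta$, yielding $r$ chaining rounds of appropriate length.

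I would then verify the three structural properties that this random-partition chain inherits from its Nesterov-style skeleton: property (a), that $\|\nabla f_{\mathcal{P}}(\boldsymbol{x})\|$ is bounded away from zero unless $|X^i(\boldsymbol{x})-X^{i+1}(\boldsymbol{x})|>1$ for every $i\in[r]$ (via Lemma~\ref{lmm:large_grad}); and property (b), the locality/independence condition, that if $|X^j(\boldsymbol{x})-X^{j-1}(\boldsymbol{x})|<1/2$ for all $j\ge i$ then $\nabla^k f_{\mathcal{P}}(\boldsymbol{x})$ for $k\le p$ depends only on $P_1,\ldots,P_{i-1}$ (via Lemma~\ref{lmm:character}). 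Properties (a) and (b) are essentially algebraic consequences of the chaining construction and follow the same pattern as in the sequential lower bound; only the substitution of $\langle \boldsymbol{u}^i,\cdot\rangle$ by $X^i(\cdot)$ needs care.

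The main obstacle, and the heart of the proof, is property (c): showing that even against $\mathsf{poly}(d)$ adaptively chosen queries per round, the future-part sums $X^j(\boldsymbol{x})$ remain tightly concentrated. Conditioning on the transcript up through round $t$ and on the already-revealed parts $P_1,\ldots,P_t$, the remaining partition is uniform over ordered partitions of the residual coordinates. For any fixed query $\boldsymbol{x}$ from round $t+1$, the entries assigned to $P_j$ (with $j>t+1$) form a sample-without-replacement sum, which I would analyze with the conditional Bernoulli concentration inequality (Theorem~\ref{theo:concentration2}). The deviation bound scales with $\|\boldsymbol{x}\|_\infty\sqrt{|P_j|\log d}$, so choosing $d/\log^2 d \gtrsim (L_p/l_p)^{2/p}\varepsilon^{-(2+2p)/p}$ makes each $|X^j(\boldsymbol{x})-X^{j-1}(\boldsymbol{x})|<1/2$ with probability at least $1-d^{-\omega(1)}$, comfortably surviving a union bound over the $\mathsf{poly}(d)$ queries per round and over the at most $r$ future indices. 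The nontrivial part here is to control $\|\boldsymbol{x}\|_\infty$: queries falling outside a natural bounded region can be truncated using property (a), since any stationary-point candidate must in fact lie in a bounded set, and the remaining queries admit an a priori norm bound that makes the concentration uniform.

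Combining these three ingredients gives the inductive statement: with probability $1-o(1)$ over $\mathcal{P}$ and the algorithm's internal randomness, after $t$ rounds the queried transcript is statistically indistinguishable from one produced against the partition $(P_1,\ldots,P_t,P_{t+1}\cup\cdots\cup P_{r+1})$, so the algorithm has effectively advanced the chain by at most one step. By property (a), no $\varepsilon$-stationary point is reached before all $r$ links of the chain are ``discovered'', forcing at least $r$ rounds in expectation. Yao's principle then transfers the bound from this distribution $F$ over $\mathcal{F}_p(L_p)$ to the randomized complexity $\mathsf{Comp}_{\mathsf{R}}$, completing the proof. The cleanest way to package this is to bound $\mathbb{E}_{f\sim F}\mathsf{T}(\mathsf{A},f,\boldsymbol{x}^0,\varepsilon)\ge r\cdot(1-o(1))$ for every deterministic $\mathsf{A}\in\mathcal{A}_{\text{det}}$, which directly lower-bounds $\mathsf{Comp}_{\mathsf{D}}$ and hence $\mathsf{Comp}_{\mathsf{R}}$.
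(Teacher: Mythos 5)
Your proposal follows essentially the same route as the paper: a uniformly random fixed-size partition of $[d]$, a chaining function in the part-sums $X^i$, the concentration inequality for linear functions of conditioned Bernoullis (Theorem~\ref{theo:concentration2}) combined with a union bound over the $\mathsf{poly}(d)$ queries to show each round reveals only $O(1)$ new parts, and the standard rescaling plus the distributional-complexity (Yao) reduction to set $r \asymp \Delta(L_p/l_p)^{1/p}\varepsilon^{-(1+p)/p}$. The one point where the paper is more concrete than your sketch is the query-norm issue you flag: rather than truncating large queries via property (a), the hard function is composed with the soft radial retraction $\rho(\boldsymbol{x})=\boldsymbol{x}/\sqrt{1+\|\boldsymbol{x}\|^2/R^2}$ (plus a quadratic term to keep gradients large far from the origin), so concentration is applied to $X^i(\rho(\boldsymbol{x}))$ with $\|\rho(\boldsymbol{x})\|\le R$ holding automatically for every query.
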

Compared to the query complexity presented in Theorem 2 of \citep{carmon2020lower}, our adaptive complexity achieves the same query complexity but with only a logarithmic dependence on the dimension.
This logarithmic scaling arises from our use of random partitions to hide information, where the logarithmic size of each part of partition is essential for ensuring concentration in high-dimensional space.

\subsection{Hard functions and its properties}
\label{sec:hard1}
Let $d_0\in \mathbb{N}_{+}$ with $ d_0 \geq \log^2 d$, and $r = d/d_0-2$.
Consider partition with fixed-size components as $P_{1}\cup P_{2} \cup \cdots\cup P_{r+2}= [d]$, where $ |P_{1}| =d_0$, $|P_{1}| = |P_{i}|$ for all $i\in [r] $.
The partition is uniformly random among all partitions with such fixed-size parts, and we denote it as $\mathcal{P}$.
For any $\boldsymbol{x}\in \mathbb{R}^d$ let  {$X^{i} (\boldsymbol{x})= (\sum_{s\in P_{i}}\boldsymbol{x}_s)/\sqrt{d_0}$}, for all $i\in [r+1]$, and we denote {$X(\boldsymbol{x}) = (X^1(\boldsymbol{x}),\ldots,X^{r+1}(\boldsymbol{x}))$}.
We define the (unscaled) hard function
$f_\mathcal{P}:\mathbb{R}^d \to \mathbb{R}$ as,
\[f_\mathcal{P} (\boldsymbol{x}) = g_\mathcal{P}(\rho(\boldsymbol{x})) + \frac{1}{5}\left\|\boldsymbol{x}\right\|^2\,,\]
where $\rho(\boldsymbol{x}) = \frac{\boldsymbol{x}}{\sqrt{1+\norm{\boldsymbol{x}}^2/R^2}} 
  ~\mbox{and}~R=230\sqrt{r+1}\,$ and 
  \begin{align*}
g_\mathcal{P}(\boldsymbol{x})= -\Psi(1)\Phi(X^1)- \sum\limits_{i=1}^{r}(-1)^i\left[\Psi(X^{i-1} - X^i)\Phi(  X^{i+1} - X^{i}) - \Psi(X^i-X^{i-1})\Phi( X^{i}-X^{i+1}) \right]\,,      
  \end{align*}
where $X^0 \equiv 0$ and the component functions are defined as \citep{carmon2020lower},
\begin{equation*}
  \Psi(x)
  := \begin{cases}
  0 & x \le 1/2\\
   \exp\left(1-\frac{1}{\left(2x-1\right)^{2}}\right)  & x>1/2
  \end{cases}
  ~~\mbox{and}~~
  \Phi(x)
  = \sqrt{e} \int_{-\infty}^{x} e^{-\frac{1}{2} t^{2}}\mathrm{d}t\,.
\end{equation*}
We enumerate all the relevant properties of the component functions $\Psi$ and $\Phi$ in the following.
\begin{lemma}[\textbf{\cite[Lemma 1]{carmon2020lower}}]
	\label{lem:fullder-props}
  The functions $\Psi$ and $\Phi$ satisfy the following.
  \begin{enumerate}
  \item \label{item:fullder-psiphi-props-zero}
    For all $x \le 1/2$ and
    all $k \in [N]$, $\Psi^{(k)}(x) = 0$.
  \item \label{item:fullder-psiphi-props-product}
    For all $x \ge 1$ and $|y| < 1$,
    $\Psi(x)\Phi'(y) > 1$.
  \item\label{item:fullder-psiphi-props-infinite}
    Both $\Psi$ and $\Phi$
    are infinitely differentiable,
    and for all $k \in [N]$ we have
    \begin{equation*}
      \sup\nolimits_x |\Psi^{(k)}(x)|
      \le \exp\bigg(\frac{5 k}{2}\log(4 k)\bigg)
      ~~\mbox{and}~~
      \sup\nolimits_x |\Phi^{(k)}(x)|
      \le \exp\bigg(\frac{3k}{2} \log \frac{3k}{2} \bigg).
    \end{equation*}
  \item \label{item:fullder-psiphi-props-bounded} The functions and derivatives
    $\Psi, \Psi', \Phi$ and $\Phi'$ are
    non-negative and bounded, with
    \begin{equation*}
      0 \le \Psi < e,
      ~~ 0 \le \Psi' \le \sqrt{54/e},
      ~~ 0 < \Phi < \sqrt{2\pi e},
      ~~ \mbox{and} ~~
      0 < \Phi' \le \sqrt{e}.
    \end{equation*}
\end{enumerate}
\end{lemma}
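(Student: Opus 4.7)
The plan is to verify each of the four items by direct computation from the closed-form definitions of $\Psi$ and $\Phi$. For item 1, $\Psi^{(k)} \equiv 0$ on $(-\infty, 1/2)$ by definition. To handle $x = 1/2$, I would show by induction on $k$ that on $(1/2, \infty)$ each derivative has the form $\Psi^{(k)}(x) = R_k(1/(2x-1))\exp(1 - 1/(2x-1)^2)$ for some rational function $R_k$, and then use the fact that $\exp(-1/(2x-1)^2)$ dominates every such $R_k$ as $x \to (1/2)^+$, yielding continuity and vanishing of all derivatives at $1/2$. Item 2 is a direct calculation: for $x \ge 1$ one has $(2x-1)^2 \ge 1$, hence $\Psi(x) = \exp(1 - 1/(2x-1)^2) \ge 1$; for $|y| < 1$ one has $\Phi'(y) = \sqrt{e}\exp(-y^2/2) > \sqrt{e}\exp(-1/2) = 1$, so the product exceeds $1$.

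For item 3, the smoothness of $\Phi$ is immediate, and the smoothness of $\Psi$ at $1/2$ follows from item 1. For the derivative bound on $\Phi$, I would express $\Phi^{(k)}(x) = \sqrt{e}\,(-1)^{k-1} H_{k-1}(x) \exp(-x^2/2)$ via the probabilists' Hermite polynomial $H_m$, invoke the classical Cram\'er-type bound $\sup_x |H_m(x) e^{-x^2/2}| \le C\sqrt{m!}$, and apply Stirling to obtain the stated $\exp(\tfrac{3k}{2}\log(3k/2))$. For $\Psi$, substituting $u = 2x - 1$ gives $\Psi = \exp(1 - 1/u^2)$ and, by the same inductive representation as in item 1, $\Psi^{(k)}(x) = 2^k R_k(1/u)\exp(1 - 1/u^2)$ with $R_k$ a polynomial whose degree and coefficient growth I would track explicitly by the recursion derived from differentiating the product. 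The supremum then reduces to a one-dimensional optimization in $u > 0$ that, after elementary estimates, produces the stated $\exp(\tfrac{5k}{2}\log(4k))$ bound.

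Item 4 is a collection of elementary estimates. Non-negativity and positivity are immediate from the definitions. The bound $\Psi < e$ follows from $1 - 1/(2x-1)^2 < 1$; $\Phi < \sqrt{2\pi e}$ from the Gaussian integral $\int_{-\infty}^\infty e^{-t^2/2}\,\mathrm{d}t = \sqrt{2\pi}$; $\Phi' \le \sqrt{e}$ from $\exp(-y^2/2) \le 1$. For the tightest piece, $\Psi' \le \sqrt{54/e}$, I would compute $\Psi'(x) = (4/u^3)\exp(1 - 1/u^2)$ with $u = 2x - 1 > 0$, differentiate $\log \Psi'$, solve $2/u^3 - 3/u = 0$ to get $u^2 = 2/3$, and substitute to obtain $\Psi'_{\max} = 4(3/2)^{3/2} e^{-1/2} = \sqrt{54/e}$.

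The main obstacle is the $k$-uniform derivative bound in item 3 for $\Psi$: Fa\`a di Bruno's formula yields a Bell-polynomial sum that is unwieldy to bound term-by-term, so the cleanest route is to keep the representation $\Psi^{(k)}(x) = 2^k R_k(1/u)\Psi(x)$ and derive a recursion for $R_k$, tracking only the maximum degree and an upper bound on the largest coefficient inductively. Converting the resulting supremum of a polynomial times $\exp(1 - 1/u^2)$ into exactly the form $\exp(\tfrac{5k}{2}\log(4k))$ requires a careful choice of bounding constants that absorbs all lower-order Stirling remainders without loosening the exponent; since the statement is quoted verbatim from \citep[Lemma 1]{carmon2020lower}, I would ultimately defer the final bookkeeping to that reference.
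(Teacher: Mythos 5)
Your verification is essentially correct, but note that the paper itself offers no proof of this lemma at all: it is imported verbatim as Lemma 1 of \citep{carmon2020lower}, so the paper's ``proof'' is the citation. Your direct computations go further than the paper does, and the elementary parts check out: the vanishing of all derivatives of $\Psi$ at and below $1/2$ via the $R_k(1/(2x-1))\exp(1-1/(2x-1)^2)$ representation is the standard bump-function argument; item 2 follows exactly as you say since $\Psi(x)\ge 1$ for $x\ge 1$ and $\Phi'(y)=\sqrt{e}\,e^{-y^2/2}>1$ strictly for $|y|<1$; the bounds $\Psi<e$, $\Phi<\sqrt{2\pi e}$, $\Phi'\le\sqrt{e}$ are immediate; and your maximization of $\Psi'(x)=4u^{-3}\exp(1-u^{-2})$ at $u^2=2/3$ gives exactly $4(3/2)^{3/2}e^{-1/2}=\sqrt{54/e}$. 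The Hermite-polynomial route for $\sup_x|\Phi^{(k)}|$ also works, since Cram\'er's bound $|He_{k-1}(x)|e^{-x^2/4}\le c\sqrt{(k-1)!}$ together with Stirling is comfortably absorbed by $\exp(\frac{3k}{2}\log\frac{3k}{2})$. The one place where your argument is not self-contained is the place where all the technical content of the lemma lives: the $k$-uniform bound $\sup_x|\Psi^{(k)}(x)|\le\exp(\frac{5k}{2}\log(4k))$, for which you sketch the right recursion (tracking degree and coefficient growth of $R_k$) but explicitly defer the bookkeeping to \citep{carmon2020lower}. Since the paper defers the entire lemma to that same reference, this is consistent with the paper's treatment; just be aware that as a standalone proof your write-up establishes items 1, 2, 4 and the $\Phi$ half of item 3 completely, while the $\Psi^{(k)}$ bound remains cited rather than proved.
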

By careful calculation and Lemma~\ref{lem:fullder-props}, we can obtain the smoothness and boundedness of $f_\mathcal{P}$ and $g_\mathcal{P}$. 
We defer the proof in Appendix~\ref{app:g} and Appendix~\ref{app:f}.
\begin{lemma}[\textbf{Smoothness and boundness of $g_\mathcal{P}$}]
\label{lmm:g}
The function $g_\mathcal{P}$ satisfies the following.
\begin{enumerate}
\item We have $g_\mathcal{P}(\boldsymbol{0})-\inf_{\boldsymbol{x}}g_\mathcal{P}(\boldsymbol{x})< 12 r $;
\item For all $\boldsymbol{x}\in \mathbb{R}^d$, $\left\|\nabla g_\mathcal{P}(\boldsymbol{x}) \right\|\leq 46\sqrt{r+1}$.
\end{enumerate}
\end{lemma}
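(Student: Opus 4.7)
The plan is to exploit the fact that the aggregators $X^1,\ldots,X^{r+1}$ depend on disjoint equal-size coordinate blocks $P_1,\ldots,P_{r+1}$: setting $\boldsymbol{e}_i := \nabla X^i = \tfrac{1}{\sqrt{d_0}}\mathbf{1}_{P_i}$, one has $\|\boldsymbol{e}_i\| = 1$ and $\langle \boldsymbol{e}_i,\boldsymbol{e}_j\rangle = 0$ for $i\ne j$. Both claims then reduce to elementary calculations using the explicit bounds on $\Psi,\Psi',\Phi,\Phi'$ supplied by Lemma~\ref{lem:fullder-props}.

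For Part 1, I would first evaluate $g_\mathcal{P}(\boldsymbol{0})$: since every $X^i(\boldsymbol{0})=0$ and $\Psi(0)=0$ by Lemma~\ref{lem:fullder-props}(1), the entire sum over $i$ vanishes, leaving the bounded constant $g_\mathcal{P}(\boldsymbol{0}) = -\Psi(1)\Phi(0)$. Next, I would lower-bound $\inf_{\boldsymbol{x}} g_\mathcal{P}(\boldsymbol{x})$ using the absolute bounds $\Psi < e$ and $\Phi < \sqrt{2\pi e}$ from Lemma~\ref{lem:fullder-props}(4). The key simplification is that $\Psi$ vanishes on $(-\infty,1/2]$, so for any fixed $\boldsymbol{x}$ at most one of $\Psi(X^{i-1}-X^i)$ and $\Psi(X^i-X^{i-1})$ is nonzero; hence each bracketed expression in the outer sum reduces to a single product of absolute value bounded by $e\sqrt{2\pi e} < 12$. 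Summing over the $r$ terms and adding the contribution from the leading piece yields $g_\mathcal{P}(\boldsymbol{0})-\inf g_\mathcal{P} < 12r$.

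For Part 2, by the chain rule
\[
\nabla g_\mathcal{P}(\boldsymbol{x}) = \sum_{i=1}^{r+1}\frac{\partial \tilde g_\mathcal{P}}{\partial X^i}\bigl(X(\boldsymbol{x})\bigr)\,\boldsymbol{e}_i,
\]
where $\tilde g_\mathcal{P}$ denotes $g_\mathcal{P}$ viewed as a function of $(X^1,\ldots,X^{r+1})$. Orthonormality of the $\boldsymbol{e}_i$'s then gives $\|\nabla g_\mathcal{P}(\boldsymbol{x})\|^2 = \sum_{i=1}^{r+1} \bigl(\partial \tilde g_\mathcal{P}/\partial X^i\bigr)^2$. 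Each partial derivative $\partial \tilde g_\mathcal{P}/\partial X^i$ is a sum of a bounded number of terms of the form $\Psi^{(a)}(\cdot)\Phi^{(b)}(\cdot)$ with $a,b\in\{0,1\}$, arising from the at most three consecutive indices of the outer sum whose summand contains $X^i$. Bounding each factor via Lemma~\ref{lem:fullder-props}(4) yields a uniform constant $C$ with $|\partial \tilde g_\mathcal{P}/\partial X^i|\leq C$, and hence $\|\nabla g_\mathcal{P}\|\leq C\sqrt{r+1}$.

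The main obstacle will be the bookkeeping needed to reach the specific numerical constant $46$: one must carefully enumerate the (at most three) neighboring outer-sum terms that involve each $X^i$, invoke the same $\Psi(y)\Psi(-y)=0$ observation used in Part 1 to prevent double counting, and combine the worst-case products using $\Psi<e$, $|\Psi'|\leq\sqrt{54/e}$, $\Phi<\sqrt{2\pi e}$, $|\Phi'|\leq\sqrt{e}$. The edge case $i=1$ (where the leading $-\Psi(1)\Phi(X^1)$ contributes an extra $\Psi(1)\sqrt{e}$ to $\partial_{X^1}$) must also be checked separately. A loose bound that counts each $\Psi(y)$--$\Psi(-y)$ pair twice would fail to match the stated $46$.
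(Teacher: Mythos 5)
Your proposal is correct and follows essentially the same route as the paper: Part 1 uses $g_\mathcal{P}(\boldsymbol{0})=-\Psi(1)\Phi(0)<0$ together with the observation that $\Psi$ vanishes on $(-\infty,1/2]$ so each bracket is a single product bounded by $e\sqrt{2\pi e}<12$, and Part 2 reduces to bounding each $|\partial \tilde g_\mathcal{P}/\partial X^i|$ by $46$ via the $\Psi,\Psi',\Phi,\Phi'$ bounds and the same no-double-counting trick (the paper phrases this as a coordinate-wise bound $\sqrt{d_0}\,|\partial_l g_\mathcal{P}|\le 46$ for $l\in P_j$ and sums squares over the $d_0(r+1)$ relevant coordinates, which is equivalent to your orthonormal-directions formulation). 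The bookkeeping you defer is exactly what the paper carries out, including the separate treatment of the edge blocks $j=1$ and $j=r+1$.
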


\begin{lemma}[\textbf{Smoothness and boundness of $f_\mathcal{P}$}]
\label{lmm:f_p}
The function $f_\mathcal{P}$ satisfies the following,
\begin{enumerate}
\item We have $f_\mathcal{P}(\boldsymbol{0}) - \inf_{\boldsymbol{x}}f_\mathcal{P}(\boldsymbol{x})<12r$.
\item For every $p \geq 1$, the $p$-th order derivatives of $f_\mathcal{P}$ are $l_p$-Lipschitz continuous, where $l_p \leq
\exp(cp \log p + c)$ for a numerical constant $c < \infty$.
\end{enumerate}
\end{lemma}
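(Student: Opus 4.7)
The plan is to reduce part~1 directly to Lemma~\ref{lmm:g}(1) using $\rho(\boldsymbol{0}) = \boldsymbol{0}$, and then isolate the main technical difficulty of part~2 in controlling the composition with the soft-truncation map $\rho$ via Fa\`a~di~Bruno's formula.

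For part~1, since $\rho(\boldsymbol{0}) = \boldsymbol{0}$ we have $f_\mathcal{P}(\boldsymbol{0}) = g_\mathcal{P}(\boldsymbol{0})$; since the quadratic term is non-negative, $\inf_{\boldsymbol{x}} f_\mathcal{P}(\boldsymbol{x}) \ge \inf_{\boldsymbol{x}} g_\mathcal{P}(\rho(\boldsymbol{x})) \ge \inf_{\boldsymbol{y}\in\mathbb{R}^d} g_\mathcal{P}(\boldsymbol{y})$, and the claim follows immediately from Lemma~\ref{lmm:g}(1).

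For part~2, write $f_\mathcal{P} = (g_\mathcal{P}\circ\rho) + q$ with $q(\boldsymbol{x})=\tfrac{1}{5}\|\boldsymbol{x}\|^2$. Derivatives of $q$ of order $\ge 3$ vanish and its second derivative is a constant tensor, so $q$ adds at most an $O(1)$ term to the Lipschitz bound for $p\le 2$. Next, observe that $g_\mathcal{P}(\boldsymbol{x})$ depends on $\boldsymbol{x}$ only through the inner products $X^i(\boldsymbol{x})=\langle\boldsymbol{v}^i,\boldsymbol{x}\rangle$ with $\boldsymbol{v}^i := d_0^{-1/2}\mathbf{1}_{P_i}$. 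Because the $P_i$ are disjoint and of equal size $d_0$, the vectors $\{\boldsymbol{v}^i\}_{i=1}^{r+1}$ form an orthonormal system, so $g_\mathcal{P} = \tilde{g}\circ V^{\top}$ for some matrix $V\in\mathbb{R}^{d\times(r+1)}$ with orthonormal columns and some $\tilde{g}:\mathbb{R}^{r+1}\to\mathbb{R}$. Orthonormality of $V$ preserves operator norms of all derivatives, so it suffices to bound the Lipschitz constants of $\nabla^p(\tilde{g}\circ\rho)$.

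The core estimate applies Fa\`a~di~Bruno's formula to $\tilde{g}\circ\rho$. Each chain-term in $\tilde{g}$ is a product of two $\Psi,\Phi$-factors evaluated at linear combinations of at most two coordinates, so by Leibniz its $p$-th derivative is a sum of at most $2^{p}$ products of $\Psi^{(\cdot)},\Phi^{(\cdot)}$ of total order $\le p$; Lemma~\ref{lem:fullder-props}(3) yields $\|\nabla^{p}\tilde{g}\|_{\mathrm{op}}\le \exp(Cp\log p)$ uniformly. For $\rho(\boldsymbol{x})=\boldsymbol{x}\,h(\|\boldsymbol{x}\|^{2})$ with $h(t)=(1+t/R^{2})^{-1/2}$, an induction on $k$ shows $\|\nabla^{k}\rho\|_{\mathrm{op}}\le \exp(C'k\log k)$: every derivative $h^{(j)}$ carries a power $R^{-2j}$ that is compensated by the uniform bound $\|\boldsymbol{x}\|\,h(\|\boldsymbol{x}\|^{2})\le R$, so the resulting estimates are independent of $\boldsymbol{x}$ and $d$. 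Combining via Fa\`a~di~Bruno and bounding the number of partitions of $[p+1]$ crudely by $(p+1)!$ delivers the claimed $l_p\le\exp(cp\log p+c)$. The main obstacle is the combinatorial explosion in Fa\`a~di~Bruno, tamed only because the $\exp(O(k\log k))$ bounds of Lemma~\ref{lem:fullder-props} absorb the $k!$-type factors arising from repeated chain-rule applications into a single exponential.
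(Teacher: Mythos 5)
Part 1 is correct and matches the paper's argument. For part 2, your overall architecture is the same as the paper's (split off the quadratic, reduce $g_\mathcal{P}$ to a function $\tilde g$ of the $r+1$ aggregated coordinates via a well-conditioned linear map, bound $\nabla^{p+1}\tilde g$ using the tridiagonal index structure, then compose with $\rho$ via Fa\`a di Bruno); your observation that the vectors $d_0^{-1/2}\mathbf{1}_{P_i}$ are orthonormal is in fact a small improvement over the paper's factorization $g_\mathcal{P}=\tilde g\circ(PM)$ with $\|PM\|_{\mathrm{op}}\le 2$, which costs an extra $2^{p+1}$.

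However, there is a genuine gap in how you control the composition with $\rho$. Your claim that $\|\nabla^{p}\tilde g\|_{\mathrm{op}}\le\exp(Cp\log p)$ \emph{uniformly in $p$} is false at $p=1$: the tridiagonal-plus-Cauchy--Schwarz argument gives a dimension-free bound only for derivatives of order $\ge 2$, whereas the gradient satisfies only $\|\nabla\tilde g\|\le 46\sqrt{r+1}$ (Lemma~\ref{lmm:g}), and this is essentially tight when the whole chain is active. The single-block partition $(S_1)=([p+1])$ in Fa\`a di Bruno therefore contributes a term $\langle\nabla^1\tilde g\circ\rho,\,\widetilde{\boldsymbol{v}}^{p+1}\rangle$ of size $\Theta(\sqrt{r})\cdot\|\nabla^{p+1}\rho\|_{\mathrm{op}}$. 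The paper kills this term using the sharp homogeneity $\|\nabla^{k}\rho\|_{\mathrm{op}}\le\exp(ck\log k)\,R^{1-k}$ (which follows from $\rho(\boldsymbol{x})=R\,\rho_1(\boldsymbol{x}/R)$), so that the choice $R=230\sqrt{r+1}$ yields $\sqrt{r+1}\cdot R^{-p}\le 1$ for $p\ge 1$. Your induction deliberately trades this $R^{1-k}$ decay for a bound ``independent of $\boldsymbol{x}$ and $d$,'' i.e.\ merely $O(1)$; with that weaker bound the single-block term is only $O(\sqrt{r}\exp(cp\log p))$, so your $l_p$ would grow like $\sqrt{r}=\widetilde\Omega(\varepsilon^{-(1+p)/(2p)})$ rather than being a numerical constant. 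This would break the scaling step in the proof of Theorem~\ref{the:main}, where $\sigma=(l_p\varepsilon/(0.08L_p))^{1/p}$ must involve only a constant $l_p$. To repair the argument you must keep the $R^{1-|S_l|}$ factors in the Fa\`a di Bruno sum and pair the lone $R^{-p}$ surplus against the $\sqrt{r+1}$ growth of $\|\nabla\tilde g\|$.
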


\subsection{Characterization of output}
\label{sec:char}
We now turn to analyze the output when interacting with $f_\mathcal{P}$ in the following lemma. The proof can be found at Appendix~\ref{app:char}.
\begin{lemma}[\textbf{Characterization of output}]
\label{lmm:character}
If there exists a constant $\alpha= \omega(1)$ such that $8R\sqrt{\frac{\alpha\log d}{d_0}}\leq \frac{1}{2}$, then 
for any randomized algorithm $\mathsf{A}$, any $\tau \leq r$,  and any initial point $\boldsymbol{x}^0$, 
$X(\mathsf{A}[f_\mathcal{P},\boldsymbol{x}^0,\tau]) = (X^1(\rho(\boldsymbol{x})),\ldots,X^{r+1}(\rho(\boldsymbol{x})))$ takes a form as 
\[(x_1,\ldots,x_{2\tau},x_{2\tau+1},x_{2\tau+1},\ldots,x_{2\tau+1}),\]
up to addictive error $1/4$ {for every coordinate} with probability $1-d^{-\omega(1)}$ over $\mathcal{P}$. 
\end{lemma}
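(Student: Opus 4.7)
The plan is to fix the algorithm's internal randomness via Fubini, so that $\mathsf{A}$ may be treated as deterministic against the random partition $\mathcal{P}$, and then induct on the round index $t\in\{0,1,\ldots,\tau\}$ to show the strengthened statement that, with probability at least $1-t\cdot d^{-\omega(1)}$ over $\mathcal{P}$, every point $\boldsymbol{x}$ queried by $\mathsf{A}$ in rounds $1,\ldots,t$ satisfies $X(\rho(\boldsymbol{x}))=(x_1,\ldots,x_{2t},x_{2t+1},x_{2t+1},\ldots,x_{2t+1})$ up to additive error $1/4$ per coordinate. Applied at $t=\tau$ to the designated output query, this immediately yields the lemma.

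The inductive step rests on an algebraic ``cascade'' identity. Since $g_\mathcal{P}$ is a sum over $i\in\{1,\ldots,r\}$ of terms of the form $\Psi(\pm(X^{i-1}-X^i))\Phi(\pm(X^{i+1}-X^i))$, and since both $\Psi$ and all of its derivatives vanish on $(-\infty,1/2]$ (item~\ref{item:fullder-psiphi-props-zero} of Lemma~\ref{lem:fullder-props}), the inductive chain structure at round $t$ forces $|X^{i-1}-X^i|\le 1/2$ for all $i\ge 2t+2$ at every past query. After chain-ruling through the smooth normalization $\rho$, this makes every summand of index $i\ge 2t+1$ (and every mixed partial derivative of it up to order $p$) contribute zero. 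Hence the transcript of $\mathsf{A}$ through round $t$ is a deterministic function of the first $2t$ partition parts $(P_1,\ldots,P_{2t})$ alone, and conditioned on that transcript, the remaining parts $(P_{2t+1},\ldots,P_{r+2})$ retain the uniform distribution among partitions of the undiscovered coordinates into blocks of size $d_0$.

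With this conditional uniformity in hand, I apply concentration to each query of round $t+1$. Fix a query $\boldsymbol{x}^{t+1,k}$ and an undiscovered index $j\ge 2t+3$; write $X^j(\rho(\boldsymbol{x}^{t+1,k}))=d_0^{-1/2}\sum_{s\in P_j}\rho(\boldsymbol{x}^{t+1,k})_s$, in which $P_j$ is, conditionally, a uniformly random size-$d_0$ subset of the undiscovered indices. Because $\|\rho(\boldsymbol{x}^{t+1,k})\|\le R$, the conditional Bernoulli concentration bound (Theorem~\ref{theo:concentration2}) gives $|X^j(\rho(\boldsymbol{x}^{t+1,k}))-\mu|\le 8R\sqrt{\alpha\log d/d_0}$ with probability at least $1-d^{-\alpha}$, where $\mu$ is the common conditional mean shared by every undiscovered part. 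The hypothesis $8R\sqrt{\alpha\log d/d_0}\le 1/2$, together with a union bound over the $\mathsf{poly}(d,1/\varepsilon)$ queries in the batch and the $O(d)$ undiscovered indices, delivers the inductive statement with total failure probability $d^{-\omega(1)}$, since $\alpha=\omega(1)$.

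The principal obstacle I anticipate is the algebraic cascade step: one has to argue that \emph{all} mixed partial derivatives of $f_\mathcal{P}=g_\mathcal{P}\circ\rho+\tfrac{1}{5}\|\cdot\|^2$ of orders up to $p$---not merely the function value or the first derivative---inherit the $\Psi$-vanishing property and therefore depend only on $P_1,\ldots,P_{2t}$. This requires a careful inspection of the chain-rule expansion through $\rho$: every surviving monomial must contain at least one factor of the form $\Psi^{(k)}(\pm(X^{i-1}-X^i))$ with $i\le 2t$, while any $\Phi^{(k)}$-factor involving an undiscovered $X^j$ must be paired in its product with an identically-zero $\Psi^{(k)}$-factor so that the entire monomial vanishes. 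The quadratic term $\tfrac{1}{5}\|\boldsymbol{x}\|^2$ is partition-independent and hence unproblematic, and once the cascade is verified the concentration step becomes a standard application of Theorem~\ref{theo:concentration2} with a polynomial union bound.
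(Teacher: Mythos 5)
Your proposal is correct and follows essentially the same route as the paper's proof: fix the algorithm's randomness, induct on rounds with the event that the truncated chain function $g_\mathcal{P}^l$ agrees with $g_\mathcal{P}$ on all round-$l$ queries (so the transcript is determined by $P_1,\ldots,P_{2l}$ and the undiscovered parts remain conditionally uniform), apply the conditioned-Bernoulli concentration of Theorem~\ref{theo:concentration2} with the hypothesis $8R\sqrt{\alpha\log d/d_0}\le 1/2$, and union-bound over the $\mathsf{poly}(d)$ queries per round. The only cosmetic difference is your worry about propagating the $\Psi$-vanishing through all mixed partials up to order $p$, which the paper sidesteps by reducing to the zeroth-order oracle and which is in any case resolved by the same fact that $\Psi^{(k)}(x)=0$ for all $x\le 1/2$ and all $k$.
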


\begin{proof}(Proof sketch of Lemma~\ref{lmm:character}).
 We fix $\tau$ and prove the following by induction for $l\in [\tau]$: With high probability, the computation
path of the (deterministic) algorithm $\mathsf{A}$ and the queries it issues in the $l$-th round are determined by $P_{1},\ldots,P_{2l-2}$.   
We first consider deterministic algorithm and uniformly random partition.

To prove the inductive claim, we let $\mathcal{E}_l$ as event that any answer of query issued in iteration $l$, the answer only depends on $P_1,\ldots,P_{2l}$, \emph{i.e.}, $\forall \boldsymbol{x} \in Q^l$, $f_\mathcal{P}(\boldsymbol{x}) = g_\mathcal{P}^l(\rho(\boldsymbol{x})) + \frac{1}{5} \| \boldsymbol{x}\|^2$. Combining the assumption that the queries in round $l$ depend only on $P_{1},\ldots,P_{2l-2}$, if $\mathcal{E}_l$ occurs, the computation path is determined by $P_{1},\ldots,P_{2l}$. Thus if all of $\mathcal{E}_1, \ldots , \mathcal{E}_{l}$ occur, the computation path in round $l$ is determined by $P_{1},\ldots,P_{2l}$.

It is sufficient to show the conditional probability $\mathbb{P}\left[\mathcal{E}_l\mid \mathcal{E}_1,\ldots,\mathcal{E}_{l-1}\right]$ is at least $1-d^{\omega(1)}$ to hide information from polynomial queries.
Further, by the chaining structure of hardness function and $\Psi(x) = 0$ whenever $x<1/2$, it is sufficient to show for any fixed $i\geq 2l$, with probability at least $1-d^{\omega(1)}$,
\[\left|X^{i}(\rho(\boldsymbol{x}))-X^{i+1}(\rho(\boldsymbol{x}))\right|\leq \frac{1}{2}.\]
This can be guaranteed by uniformly random partition and the {concentration of linear functions over the Boolean slice} (Theorem \ref{theo:concentration2}). Finally, we note that by allowing the algorithm to use random bits, the results are
a convex combination of the bounds above, so the same high-probability bounds are satisfied.
\end{proof}

%



Given such characterization, the remaining goal is to show that without final part being discovered, the gradient norm of output will always be large.
We summarize this in the following lemma and the proof can be found in Appendix~\ref{app:large_grad}. 
\begin{lemma}[\textbf{Small weighted partition implies large gradient norm}]
\label{lmm:large_grad}
For any $\boldsymbol{x}\in \mathbb{R}^d$, if $|X^{r+1}(\rho(\boldsymbol{x}))- X^{r}(\rho(\boldsymbol{x}))|<1$, then $\left\|\nabla f_\mathcal{P}(\boldsymbol{x})\right\|\geq 0.08$.
\end{lemma}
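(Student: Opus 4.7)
The plan is to adapt the gradient lower-bound argument of \cite{carmon2020lower} to the partition-indexed construction. Writing $\boldsymbol{y}=\rho(\boldsymbol{x})$ and $Y^i=X^i(\boldsymbol{y})$, the chain rule yields
\begin{equation*}
\nabla f_\mathcal{P}(\boldsymbol{x}) = J_\rho(\boldsymbol{x})^\top \nabla g_\mathcal{P}(\boldsymbol{y}) + \tfrac{2}{5}\boldsymbol{x},
\qquad
J_\rho(\boldsymbol{x})^\top = (1+\|\boldsymbol{x}\|^2/R^2)^{-1/2}\bigl(I - \rho(\boldsymbol{x})\rho(\boldsymbol{x})^\top/R^2\bigr),
\end{equation*}
and I split on the magnitude of $\|\boldsymbol{x}\|$. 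In the large regime $\|\boldsymbol{x}\| \geq C R\sqrt{r+1}$ for a suitable absolute constant $C$, the term $\tfrac{2}{5}\boldsymbol{x}$ dominates since $\|J_\rho^\top\|_{\mathrm{op}} \leq (1+\|\boldsymbol{x}\|^2/R^2)^{-1/2}$ and $\|\nabla g_\mathcal{P}\| \leq 46\sqrt{r+1}$ by Lemma~\ref{lmm:g}; a reverse triangle inequality then yields the claimed bound. In the moderate regime, $J_\rho^\top$ is well-conditioned (its least singular value is bounded below by a positive absolute constant), so it suffices to lower-bound $\|\nabla g_\mathcal{P}(\boldsymbol{y})\|$.

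Because each coordinate of $\boldsymbol{y}$ lies in a unique part $P_{i(j)}$ with $|P_i|=d_0$, a direct application of the chain rule gives $\partial_{y_j}g_\mathcal{P} = d_0^{-1/2}\partial_{Y^{i(j)}}g_\mathcal{P}$, and hence
\begin{equation*}
\|\nabla g_\mathcal{P}(\boldsymbol{y})\|^2 = \sum_{i=1}^{r+1}\bigl(\partial_{Y^i}g_\mathcal{P}\bigr)^2.
\end{equation*}
Thus it suffices to exhibit a single index $i$ with $|\partial_{Y^i}g_\mathcal{P}|$ bounded below by a universal constant.

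To locate this index I track the position of the last large jump in the chain $0=Y^0,Y^1,\dots,Y^{r+1}$. Set $i^\ast = \max\{i \in [r+1] : |Y^i - Y^{i-1}| \geq 1\}$, with the convention $i^\ast=0$ if the set is empty; the hypothesis $|Y^{r+1}-Y^r|<1$ forces $i^\ast \leq r$. If $i^\ast \geq 1$, the contribution of the $i=i^\ast$ summand of $g_\mathcal{P}$ to $\partial_{Y^{i^\ast+1}}g_\mathcal{P}$ is
\begin{equation*}
(-1)^{i^\ast+1}\bigl[\Psi(Y^{i^\ast-1}-Y^{i^\ast})\Phi'(Y^{i^\ast+1}-Y^{i^\ast}) + \Psi(Y^{i^\ast}-Y^{i^\ast-1})\Phi'(Y^{i^\ast}-Y^{i^\ast+1})\bigr],
\end{equation*}
and by the product identity in Lemma~\ref{lem:fullder-props} the sign-appropriate summand strictly exceeds $1$ since $|Y^{i^\ast}-Y^{i^\ast-1}|\geq 1$ and $|Y^{i^\ast+1}-Y^{i^\ast}|<1$. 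If instead $i^\ast=0$, all increments have magnitude strictly less than $1$; in particular $|Y^1|<1$, and the leading term $-\Psi(1)\Phi(Y^1)$ of $g_\mathcal{P}$ contributes $-\Psi(1)\Phi'(Y^1)$ to $\partial_{Y^1}g_\mathcal{P}$ with magnitude at least $\Psi(1)=1$.

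The principal obstacle is controlling possible cancellations at the chosen index from the neighbouring summands $i\in\{i^\ast-1,i^\ast+1,i^\ast+2\}$ (or $i\in\{1,2\}$ in the second branch). These contain $\Psi$ and $\Psi'$ factors whose arguments have magnitude strictly less than $1$ by the maximality of $i^\ast$, so the narrow support of $\Psi$ on $(1/2,\infty)$ together with the uniform bounds on $\Psi, \Psi', \Phi, \Phi'$ from Lemma~\ref{lem:fullder-props} keep the total correction below the main term. Propagating the resulting lower bound on $\|\nabla g_\mathcal{P}\|$ through $J_\rho^\top$ and combining with the contribution of $\tfrac{2}{5}\boldsymbol{x}$ then produces the numerical bound $\|\nabla f_\mathcal{P}(\boldsymbol{x})\| \geq 0.08$.
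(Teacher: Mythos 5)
Your skeleton matches the paper's (lower-bound a partial derivative of $g_\mathcal{P}$ at a well-chosen index, then handle $\rho$ and the quadratic term by splitting on $\|\boldsymbol{x}\|$), but the two steps you yourself flag as delicate are exactly where the argument breaks, and the fixes you sketch would not close. First, the cancellation control: you propose to dominate the neighbouring summands using the support of $\Psi$ on $(1/2,\infty)$ together with the uniform bounds of Lemma~\ref{lem:fullder-props}. But increments of magnitude in $(1/2,1)$ are not excluded by the maximality of $i^\ast$, and a single correction term such as $\Psi'(Y^{i^\ast+1}-Y^{i^\ast})\Phi(Y^{i^\ast+1}-Y^{i^\ast+2})$ can be as large as $\sqrt{54/e}\cdot\sqrt{2\pi e}\approx 18$, dwarfing your main term of size $1$. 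The paper needs no such domination: writing out $\partial_l g_\mathcal{P}$ for $l\in P_j$, every one of the eight terms carries the common sign $(-1)^j$ (because $\Psi,\Psi',\Phi,\Phi'\ge 0$), so there is no cancellation at all and the single term $\Psi(\cdot)\Phi'(\cdot)>1$ survives intact. Your proof needs this sign-coherence observation; the "narrow support plus uniform bounds" route does not give the claimed constant.

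Second, the composition step. In the moderate regime, a lower bound of order $1$ on $\|\nabla g_\mathcal{P}(\boldsymbol{y})\|$, even pushed through a well-conditioned $J_\rho^\top$, cannot simply be ``combined with'' the term $\tfrac{2}{5}\boldsymbol{x}$: that term has norm up to $R/5=46\sqrt{r+1}$ and could cancel $J_\rho^\top\nabla g_\mathcal{P}$ entirely. The paper instead tests $\nabla f_\mathcal{P}(\boldsymbol{x})$ against a unit direction $\boldsymbol{u}$ satisfying $|\langle\boldsymbol{u},\rho(\boldsymbol{x})\rangle|<1$, which caps the quadratic term's contribution at an $O(1)$ quantity that is numerically dominated; the direction is the normalization of $\mathbf{1}_{P_j}-\mathbf{1}_{P_{j-1}}$, so that $\langle\boldsymbol{u},\boldsymbol{y}\rangle\propto X^j-X^{j-1}$, i.e., the small increment. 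Your single-coordinate direction $Y^{i^\ast+1}$ does not have this property, since $Y^{i^\ast+1}$ itself can be of order $r$, so even recasting your argument directionally requires the paired-parts construction. A further slip: with your threshold $\|\boldsymbol{x}\|\ge CR\sqrt{r+1}$, the remaining ``moderate'' regime contains points where $(1+\|\boldsymbol{x}\|^2/R^2)^{-1/2}$ decays like $1/\sqrt{r+1}$, so $J_\rho^\top$ is not well-conditioned uniformly in $r$; the split must occur at $\|\boldsymbol{x}\|\asymp R$ (the paper uses $R/2$).
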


Now we are ready to prove Theorem \ref{the:main} by leveraging Lemma~\ref{lmm:character} and Lemma~\ref{lmm:large_grad}.

\begin{proof}(Proof of Theorem \ref{the:main}).
We consider the scaled hard function $f^0_\mathcal{P}:\mathbb{R}^d \to \mathbb{R}$ as 
\[f^0_\mathcal{P}(\boldsymbol{x}) = \frac{L_p\sigma^{p+1}}{l_p}f_\mathcal{P}(\boldsymbol{x}/\sigma), \]
where scale parameter $\sigma > 0$ are to be
determined, 
and the
quantity $l_{p} \le \exp(c_1 p \log p + c_1)$ for a numerical constant $c_1$
is defined in
Lemma~\ref{lmm:f_p}.

Combining Lemma~\ref{lmm:character} and Lemma~\ref{lmm:large_grad}, we can claim if there exists a constant $\alpha= \omega(1)$ such that $8R\sqrt{\frac{\alpha\log d}{d_0}}\leq \frac{1}{2}$,
then 
\[\left\|\nabla f_\mathcal{P}(\boldsymbol{x}/\sigma)\right\|\geq 0.08,\]
with probability $1-d^{-\omega(1)}$ over $\mathcal{P}$. Taking $\sigma = \left(\frac{l_p\varepsilon}{0.08L_p}\right)^{1/p}$, we have 
\[\mathsf{Comp}_{\mathsf{R}}(\mathcal{F},\varepsilon,\Delta)\geq(1-d^{-\omega(1)})\cdot r/2.  \]
Then, we have 
\[f^0_\mathcal{P}(\boldsymbol{0}) - \inf_{\boldsymbol{x}}f^0_\mathcal{P}(\boldsymbol{x})\leq \frac{L_p\sigma^{p+1}}{l_p} 12 r\leq 1857\frac{l_p^{1/p}}{L_p^{1/p}}\varepsilon^{(p+1)/p}r. \]
Then we take $r = \lfloor{\frac{\Delta}{1857} ({L_p}/{l_p})^{1/p} \varepsilon^{-\frac{1+p}{p}}}\rfloor$.

Finally, we check the requirement of dimension $d$ to satisfy that there exists a constant $\alpha= \omega(1)$ such that $8R\sqrt{\frac{\alpha\log d}{d_0}}\leq \frac{1}{2}$. It is sufficient to let 
$16^2\cdot 230^2 (r+1)^2{{\log^2 d}}\leq d.$
\end{proof}

\section{Constant dimensional cases}
\label{sec:constant}
In this section, we first establish the lower bound on the number of queries required per iteration for algorithms operating within  $k < O(\log(1/\varepsilon))$  iterations, as detailed in Section \ref{sec:lower_con}.
Subsequently, we derive the upper bound on the number of queries required per iteration for algorithms running within a constant number of iterations, as outlined in Section \ref{sec:upper_con}.

\subsection{Upper bound for constant iteration case}
\label{sec:upper_con}
\begin{theorem}
\label{the:main3}
For $\varepsilon>0$, $d,k = \Theta(1)$ with $d\geq 2$, there is a deterministic algorithm finding $\varepsilon$-stationary points, running within $k$-round, with
\begin{itemize}
    \item $C_1(d,k,L,\Delta)\cdot \varepsilon^{-\frac{{d-1}}{\left(\frac{2d}{d+1}\right)^k-1}-{d-1}}$ queries per iteration when $f$ is unconstrained,
    \item $C_2(d,k,L)\cdot  \varepsilon^{-\frac{{d-1}}{2\left(\left(\frac{2d}{d+1}\right)^k-1\right)}-\frac{d-1}{2}}$ queries per iteration when $f$ is constrained on $[0,1]^d$, 
\end{itemize}
where $C_1(d,k,L,\Delta)$ and $C_2(d,k,L)$ are uniformly constants depending on $d,k,L,\Delta$ and $d,k,L$ respectively.
\end{theorem}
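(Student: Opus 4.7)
The plan is to analyze the Gradient Flow Grid Trapping (GFGT) algorithm sketched in Section~\ref{sec:tech2}, which maintains a nested sequence of hyperrectangles $R_0 \supseteq R_1 \supseteq \cdots \supseteq R_k$, iterates $\boldsymbol{x}^t \in R_t$, and slack parameters $\varepsilon_0 < \varepsilon_1 < \cdots < \varepsilon_k \le \varepsilon$, with the inductive invariant that every point of $\partial R_t$ is $\varepsilon_t$-unreachable from $\boldsymbol{x}^t$. By Lemma~\ref{lmm:trap}, this invariant implies that $R_t$ contains an $\varepsilon_t$-stationary point; combined with a terminal $\mathrm{diam}(R_k) = O(\varepsilon/L)$ and the $L$-Lipschitz gradient, any point of $R_k$ becomes $\varepsilon$-stationary.

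First I would verify inductively that one iteration of GFGT preserves the invariant. At iteration $t$ the algorithm places $\ell_t$ equally spaced barriers per coordinate direction, queries a $\delta_t$-net on their union, applies the reachability update of Figure~\ref{fig:1}, and compresses $R_t$ to $R_{t+1}$ as in Figure~\ref{fig:2}. I would then split $\partial R_{t+1}$ into the inherited portion $\partial R_{t+1} \cap \partial R_t$ and the newly exposed portion lying on queried barriers. Unreachability on the inherited piece from $\boldsymbol{x}^{t+1}$ follows from unreachability from $\boldsymbol{x}^t$ via the triangle inequality together with the small increment $\varepsilon_{t+1} - \varepsilon_t = O(\varepsilon/k)$. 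For the newly exposed piece, the $\delta_t$-net is combined with the smoothness estimate $|f(\boldsymbol{y}') - f(\boldsymbol{y}_q)| \le \|\nabla f(\boldsymbol{y}_q)\|\delta_t + \tfrac{L}{2}\delta_t^2$, which propagates $\varepsilon_t$-unreachability from a queried $\boldsymbol{y}_q$ to any nearby $\boldsymbol{y}'$ provided the Lipschitz correction is absorbed by the slack $(\varepsilon_{t+1}-\varepsilon_t)\|\boldsymbol{x}^{t+1}-\boldsymbol{y}_q\|$. The two sub-cases from Section~\ref{sec:tech2} (iterate unchanged versus iterate moved to the reachable point of smallest value) are handled uniformly using Lemma~\ref{lem:increase-eps} and the observation that $\boldsymbol{x}^{t+1}$ stays at distance $\Omega(D_{t+1})$ from every face of $\partial R_{t+1}$.

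Next I would carry out the balancing. The compression rule gives $D_{t+1} = \Theta(D_t/\ell_t)$, so the schedule must satisfy $\prod_{t=0}^{k-1}\ell_t = \Theta(1/\varepsilon)$ to reach the target diameter. The per-iteration query count is $Q_t = O\bigl(d\,\ell_t(D_t/\delta_t)^{d-1}\bigr)$, and the slack-versus-Lipschitz inequality above forces $\delta_t = O\bigl(\sqrt{\varepsilon D_{t+1}/L}\bigr)$ in the constrained regime (where $\|\nabla f\|$ near $\partial R_{t+1}$ is $O(\varepsilon)$, so the quadratic term dominates) and $\delta_t = O(\varepsilon D_{t+1}/L)$ in the unconstrained regime (where $\|\nabla f\|$ can be $\Omega(L \cdot \mathrm{diam}(R_0))$ over a $\Delta$-dependent bounding box, so the linear term dominates). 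Substituting $\delta_t = \Theta(\sqrt{\varepsilon D_{t+1}/L})$ and minimizing $\max_t Q_t$ under $\prod \ell_t = 1/\varepsilon$ leads to the recurrence $\ell_t = A\,\bigl(\prod_{s<t}\ell_s\bigr)^{(d-1)/(d+1)}$; taking logarithms gives $e_t = \log A + \tfrac{d-1}{d+1}\sum_{s<t}e_s$, hence $e_t = \alpha^t \log A$ with $\alpha = 1 + \tfrac{d-1}{d+1} = \tfrac{2d}{d+1}$. The constraint $\prod \ell_t = \varepsilon^{-1}$ then forces $A = \Theta\bigl(\varepsilon^{-(d-1)/(2(\alpha^k-1))}\bigr)$, and one checks directly that $Q_t = \Theta\bigl(A \cdot L^{(d-1)/2}\varepsilon^{-(d-1)/2}\bigr)$ is constant across $t$, yielding the constrained bound. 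The unconstrained bound follows analogously with $\delta_t = \Theta(\varepsilon D_{t+1}/L)$ (no square root), which doubles the $\varepsilon$-exponent and absorbs the initial bounding-box diameter into the constant $C_1(d,k,L,\Delta)$.

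The main obstacle will be the careful propagation of unreachability through finite $\delta_t$-nets: since unreachability is a continuous property while only finitely many points are queried, $\delta_t$ must be chosen so that the smoothness-induced Lipschitz perturbation is absorbed by the slack $(\varepsilon_{t+1}-\varepsilon_t)$, and the geometric fact that $\boldsymbol{x}^{t+1}$ lies at distance $\Omega(D_{t+1})$ from every face of $\partial R_{t+1}$ (which is what makes the slack large enough to absorb the error) has to be extracted from the compression rule of Figure~\ref{fig:2}. The second nontrivial step is the joint optimization over the schedule $(\ell_0,\dots,\ell_{k-1})$: since both $D_t$ and $\delta_t$ depend on the whole prefix, equating the $Q_t$ produces the coupled geometric recurrence whose base $\alpha = 2d/(d+1)$ is precisely the source of the unusual exponent in the theorem.
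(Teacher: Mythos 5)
Your overall plan is the paper's own: the same invariant (every point of $\partial R_t$ is $\varepsilon_t$-unreachable from $\boldsymbol{x}^t$), the same split of $\partial R_{t+1}$ into an inherited piece handled by the triangle inequality and a newly exposed piece handled via Lemma~\ref{lem:increase-eps} with $\mathrm{dist}(\boldsymbol{x}^{t+1},E)=\Omega(D_{t+1})$, the same choice $\delta_t=\Theta\bigl(\sqrt{\varepsilon D_{t+1}/L}\bigr)$, and the same balancing recurrence whose solution $\ell_t = A^{\alpha^t}$ with $\alpha=\tfrac{2d}{d+1}$ produces the exponent $\tfrac{d-1}{2(\alpha^k-1)}$. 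For the constrained case your derivation matches the paper's and is correct.

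The gap is in your account of the unconstrained case. You claim the extra factor $\varepsilon^{-\frac{d-1}{2(\alpha^k-1)}-\frac{d-1}{2}}$ arises because large gradients force the net spacing down to $\delta_t=O(\varepsilon D_{t+1}/L)$ (linear rather than square-root). This is wrong on two counts. First, the absorption condition coming from Lemma~\ref{lem:increase-eps} is $\frac{\delta_t^2}{2\,\mathrm{dist}(\boldsymbol{x},E)}\bigl(L+\frac{2\varepsilon_t}{\mathrm{dist}(\boldsymbol{x},E)}\bigr)\le\varepsilon_{t+1}-\varepsilon_t$; it involves only $L$ and the unreachability threshold $\varepsilon_t$, not $\|\nabla f\|$, so the square-root scaling of $\delta_t$ survives unchanged in the unconstrained setting (and indeed the paper uses the identical $\delta_t$ formula in both cases). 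Second, your prescription is inconsistent with the bound you are trying to prove: with $\delta_t\propto D_{t+1}$ the ratio $D_t/\delta_t\propto\ell_t L/\varepsilon$ no longer depends on $D_t$, the coupled recurrence degenerates to $Q_t\asymp\ell_t^{\,d}\varepsilon^{-(d-1)}$, and equalizing under $\prod\ell_t=\Theta(1/\varepsilon)$ yields an exponent of the form $d/k$, not $\tfrac{d-1}{\alpha^k-1}$. The correct source of the doubling is the initialization: for unconstrained $f$ one must start from a sublevel-type box $R_0=\{\boldsymbol{x}:\|\boldsymbol{x}-\boldsymbol{x}^0\|_\infty\le 2f(\boldsymbol{x}^0)/\varepsilon_0\}$ of diameter $\Theta(\Delta/\varepsilon)$ (so that $\partial R_0$ is $\varepsilon_0$-unreachable for free). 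Consequently $\prod_t\ell_t$ must be $\Theta(\Delta L/\varepsilon^2)$ rather than $\Theta(1/\varepsilon)$, which doubles the exponent in the $A$-term, and the face sizes $D_t$ carry an extra factor $(\Delta/\varepsilon)^{(d-1)/2}$ through $(D_t/\delta_t)^{d-1}\propto (D_t)^{(d-1)/2}$, which supplies the remaining $\varepsilon^{-(d-1)/2}$. With that correction the rest of your argument goes through.
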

The unconstrained case has square number of queries due to the initialization region scale with $\frac{\Delta}{\varepsilon}$ instead of $[0,1]^d$. 
For constrained case,  our upper bound is asymptotically tight compared to the lower bound $\Omega(\varepsilon^{-\frac{{d-1}}{2(d^k-1)}- \frac{d-1}{2}})$ in Theorem~\ref{the:main2}. We note that numerically, $({d-1})/\big({\big(\frac{2d}{d+1}\big)^k-1}\big)<0.05$ when $d=2, k= 9$.
Furthermore, when $d=2$ and $k=1$, the complexity is  $\varepsilon^{-2}$  which matches the complexity of a na\"ive grid search.
These upper bounds are first non-trivial results for $d,k=\Theta(1)$.
The algorithms and their proofs are deferred to Appendix~\ref{app:constant_alg}.

\subsection{Lower bounds for $O(\log(1/\varepsilon))$ iteration case}
\label{sec:lower_con}

\begin{theorem}
\label{the:main2}
For $\varepsilon>0$, $d = \Theta(1)$ with $d\geq 2$ and $k\in \mathbb{N}$ with $k=O(\log (1/\varepsilon))$, any (possible randomized) algorithm running within $k$-round, and issuing $C(d)\cdot \varepsilon^{-\frac{d^k}{d^k-1}\cdot \frac{d-1}{2}}\cdot \frac{1}{k}$ queries per round fails to find $\varepsilon$-stationary points of a smooth function over $[0,1]^d$ with probability at least $7/40$.
\end{theorem}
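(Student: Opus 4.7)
The plan is to reduce the continuous problem to discrete local-minimum search on the $d$-dimensional grid $[n]^d$ with $n = \Theta(1/\varepsilon)$, and then invoke the random-staircase lower bound of Br\^anzei et al.\ that is already used in the constant-dimensional technical overview. Concretely, I would partition $[0,1]^d$ into $n^d$ axis-aligned subcubes of side length $1/n$, index them by lattice points in $[n]^d$, and construct a family of hard instances $f_\pi:[0,1]^d\to\mathbb{R}$ parameterized by a random monotone lattice path $\pi$ starting at a designated cell. Inside the cell indexed by a lattice point $v$, the function is a smooth template that is strictly decreasing along the direction of $\pi$ when $v\in\pi$, with a fixed downward offset proportional to the distance of $v$ along the path; outside the path the function is a cell-independent affine background. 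The templates are glued with a fixed $C^\infty$ bump supported strictly inside each cell so that $f_\pi$ is globally $L$-smooth, the only $\varepsilon$-KKT point sits in the cell at the \emph{tip} of $\pi$, and every boundary or off-path point has projected gradient of norm $\gg\varepsilon$.

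Given this construction, returning an $\varepsilon$-stationary point identifies the tip cell of $\pi$, so any $k$-round algorithm for stationary points yields a $k$-round algorithm for locating the tip of a random staircase on $[n]^d$. I would set up an adversarial oracle that, for each query $\boldsymbol{x}\in[0,1]^d$, answers using only the lattice cell containing $\boldsymbol{x}$ together with the portion of the path $\pi$ revealed by queries in earlier rounds; this makes the reduction both round-preserving and query-preserving with overhead only $O(1)$ per query. Applying the random-staircase lower bound of \citep{branzei2022query} in the range $k = O(\log(1/\varepsilon))$ with grid side $n$ gives a per-round query lower bound of $\widetilde\Omega(n^{(d-1)/2\cdot d^k/(d^k-1)})$ to hit the tip with probability exceeding $1-7/40$, and substituting $n=\Theta(1/\varepsilon)$ yields the stated bound. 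The passage from the distributional hardness to any \emph{randomized} algorithm is by Yao's minimax principle: condition on the random tape and apply the deterministic lower bound.

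The main obstacle is engineering the smooth embedding so that it is simultaneously (i) $L$-smooth with the stated constant independent of $\varepsilon$, (ii) strictly path-monotone with $\|\nabla f\|\geq \Omega(1)\gg\varepsilon$ at every point lying outside the tip cell (including points on cell boundaries, which are the trickiest), and (iii) \emph{cell-local} in the sense that $f$ on a subcube depends only on the staircase values at that cell and at its immediate lattice neighbors. I would handle (ii) by choosing a fixed template whose gradient is $\Theta(1)$ on an annular region inside each cell and using the same affine background on a neighborhood of the boundary, so that queries near cell walls are informationally useless; (iii) then follows by construction, which is what lets the adversarial oracle defer the random coin flips of the staircase to the moment the corresponding cell is first queried. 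A secondary check is that the path fits inside $[n]^d$, which uses precisely $k=O(\log(1/\varepsilon))$ together with the staircase's $d^k/(d^k-1)$ growth exponent; and that the constant-dimensional reduction of \citep{bubeck2020trap,vavasis1993black} between stationary points and grid local minima can be adapted to this parallel setting without losing rounds.
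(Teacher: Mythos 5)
Your proposal follows essentially the same route as the paper: reduce to local-minimum search for a monotone path (staircase) function on the grid $G_{n,d}$ via the Vavasis-style smooth embedding of \citep{vavasis1993black,bubeck2020trap}, observe that the reduction is round-preserving with $O(1)$ query overhead, and invoke the random-staircase adaptive lower bound of \citep{branzei2022query} extended to $k=O(\log(1/\varepsilon))$ rounds. One bookkeeping point to fix: the smooth embedding does \emph{not} tolerate grid side $n=\Theta(1/\varepsilon)$. The construction requires the per-cell decrease of the path function to dominate the tilt needed to kill spurious stationary points, which (with path length up to $dn$) forces $n\leq 1/\sqrt{d\varepsilon}$; with cells of side $\varepsilon$ your requirements (i) and (ii) are incompatible. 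Correspondingly, the staircase bound you quote is off by a square root --- it gives $\Omega\bigl(n^{(d-1)\cdot d^k/(d^k-1)}/k\bigr)$ queries per round, not $n^{(d-1)/2\cdot d^k/(d^k-1)}$. Your two deviations cancel, so the final exponent in $\varepsilon$ is correct, but the intermediate quantities as written would not survive an attempt to actually build the hard instance.
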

When $k = \Theta(\log (1/\varepsilon))$, the lower bound is 
\[\Omega\bigg( \varepsilon^{-\left(1+\frac{1}{d^k-1}\right)\cdot \frac{d-1}{2}}\cdot \frac{1}{k}\bigg) = \Omega\left( \varepsilon^{-\frac{d-1}{2}}\cdot \varepsilon^{-\frac{d-1}{2(d^k-1)}}\cdot \frac{1}{k}\right) =  \widetilde{\Omega}\left( \varepsilon^{-\frac{d-1}{2}}\right),\]
the last equality is implied from $\varepsilon^{-1}\leq (\log(1/\varepsilon))^{\frac{2(d^k-1)}{d-1}}$ and $\varepsilon^{-\frac{d-1}{2(d^k-1)}}\cdot \frac{1}{k}\geq \frac{1}{k}$.
Thus our lower bound matches the upper bound  established in \citep{hollender2023computational}, up to a logarithmic factor.
The proof is based on the reduction to the problem of finding local minima in grid graph~\citep{vavasis1993black,bubeck2020trap} and the adaptive complexity of finding local minima in grid graph~\citep{branzei2022query}. The details can be found in Appendix~\ref{app:reduction}.

\section{Conclusions and future works}
In this paper, we make significant progress towards understanding parallelization for non-convex optimization, in terms of both upper and lower bounds.
In particular, when $d = \widetilde{\Omega}\left(\varepsilon^{-(2+2p)/{p}}\right)$, we discover that parallelization cannot accelerate non-convex optimization, leading to a complexity of $\Omega(\varepsilon^{-(1+p)/p})$ for finding a stationary point, which matches the lower bound for one-query-per-round algorithms.
When $d = \Theta(1)$, our algorithm finds approximate stationary point within constant iteration and the asymptotically optimal query requirement which matches our lower bound.
Furthermore we  answer the open problem in~\citep{bubeck2020trap} by showing $\widetilde{\Omega}(\varepsilon^{-(d-1)/2})$ is necessary for $\log(1/\varepsilon)$ iteration algorithms.
%


There are several interesting directions for future exploration. Exploring the role of parallelization for intermediate dimension regime ($ d=  \omega(1) \leq d \leq O(\varepsilon^{-4})$) is an intriguing challenge that continues to be of significant academic interest. Na\"ive adaptation of  lower bounds from high-dimensional cases is non-trivial, as the construction used to hide information from polynomial queries relies on high dimensionality to ensure concentration inequalities. Further, the reduction to finding local minima on a grid graph
depends on constant dimensions, which breaks down when dimension increses.
In addition, the query complexity of high-order methods in the constant-dimensional case remains unknown.

\newpage
\bibliography{references}

\begin{thebibliography}{57}
\providecommand{\natexlab}[1]{#1}
\providecommand{\url}[1]{\texttt{#1}}
\expandafter\ifx\csname urlstyle\endcsname\relax
  \providecommand{\doi}[1]{doi: #1}\else
  \providecommand{\doi}{doi: \begingroup \urlstyle{rm}\Url}\fi

\bibitem[Agarwal et~al.(2017)Agarwal, Agarwal, Assadi, and Khanna]{agarwal2017learning}
Arpit Agarwal, Shivani Agarwal, Sepehr Assadi, and Sanjeev Khanna.
\newblock Learning with limited rounds of adaptivity: Coin tossing, multi-armed bandits, and ranking from pairwise comparisons.
\newblock In \emph{Conference on Learning Theory}, pages 39--75. PMLR, 2017.

\bibitem[Allen-Zhu et~al.(2019)Allen-Zhu, Li, and Song]{allen2019convergence}
Zeyuan Allen-Zhu, Yuanzhi Li, and Zhao Song.
\newblock A convergence theory for deep learning via over-parameterization.
\newblock In \emph{International Conference on Machine Learning}, pages 242--252. PMLR, 2019.

\bibitem[Anari et~al.(2024)Anari, Chewi, and Vuong]{anari24a}
Nima Anari, Sinho Chewi, and Thuy-Duong Vuong.
\newblock Fast parallel sampling under isoperimetry.
\newblock In \emph{Proceedings of Thirty Seventh Conference on Learning Theory}, volume 247 of \emph{Proceedings of Machine Learning Research}, pages 161--185. PMLR, 30 Jun--03 Jul 2024.

\bibitem[Arjevani et~al.(2023)Arjevani, Carmon, Duchi, Foster, Srebro, and Woodworth]{arjevani2023lower}
Yossi Arjevani, Yair Carmon, John~C Duchi, Dylan~J Foster, Nathan Srebro, and Blake Woodworth.
\newblock Lower bounds for non-convex stochastic optimization.
\newblock \emph{Mathematical Programming}, 199\penalty0 (1):\penalty0 165--214, 2023.

\bibitem[Balkanski and Singer(2018{\natexlab{a}})]{balkanski2018adaptive}
Eric Balkanski and Yaron Singer.
\newblock The adaptive complexity of maximizing a submodular function.
\newblock In \emph{Proceedings of the 50th annual ACM SIGACT symposium on theory of computing}, pages 1138--1151, 2018{\natexlab{a}}.

\bibitem[Balkanski and Singer(2018{\natexlab{b}})]{balkanski2018parallelization}
Eric Balkanski and Yaron Singer.
\newblock Parallelization does not accelerate convex optimization: Adaptivity lower bounds for non-smooth convex minimization.
\newblock \emph{arXiv preprint arXiv:1808.03880}, 2018{\natexlab{b}}.

\bibitem[Ball et~al.(1997)]{ball1997elementary}
Keith Ball et~al.
\newblock An elementary introduction to modern convex geometry.
\newblock \emph{Flavors of geometry}, 31\penalty0 (1-58):\penalty0 26, 1997.

\bibitem[Birgin et~al.(2017)Birgin, Gardenghi, Mart{\'\i}nez, Santos, and Toint]{birgin2017worst}
Ernesto~G Birgin, JL~Gardenghi, Jos{\'e}~Mario Mart{\'\i}nez, Sandra~Augusta Santos, and Ph~L Toint.
\newblock Worst-case evaluation complexity for unconstrained nonlinear optimization using high-order regularized models.
\newblock \emph{Mathematical Programming}, 163:\penalty0 359--368, 2017.

\bibitem[Bottou et~al.(2018)Bottou, Curtis, and Nocedal]{bottou2018optimization}
L{\'e}on Bottou, Frank~E Curtis, and Jorge Nocedal.
\newblock Optimization methods for large-scale machine learning.
\newblock \emph{SIAM review}, 60\penalty0 (2):\penalty0 223--311, 2018.

\bibitem[Br{\^a}nzei and Li(2022)]{branzei2022query}
Simina Br{\^a}nzei and Jiawei Li.
\newblock The query complexity of local search and brouwer in rounds.
\newblock In \emph{Conference on Learning Theory}, pages 5128--5145. PMLR, 2022.

\bibitem[Braverman et~al.(2016)Braverman, Mao, and Weinberg]{braverman2016parallel}
Mark Braverman, Jieming Mao, and S~Matthew Weinberg.
\newblock Parallel algorithms for select and partition with noisy comparisons.
\newblock In \emph{Proceedings of the forty-eighth annual ACM symposium on Theory of Computing}, pages 851--862, 2016.

\bibitem[Bubeck and Mikulincer(2020)]{bubeck2020trap}
S{\'e}bastien Bubeck and Dan Mikulincer.
\newblock How to trap a gradient flow.
\newblock In \emph{Conference on Learning Theory}, pages 940--960. PMLR, 2020.

\bibitem[Bubeck et~al.(2019)Bubeck, Jiang, Lee, Li, and Sidford]{bubeck2019complexity}
S{\'e}bastien Bubeck, Qijia Jiang, Yin-Tat Lee, Yuanzhi Li, and Aaron Sidford.
\newblock Complexity of highly parallel non-smooth convex optimization.
\newblock \emph{Advances in neural information processing systems}, 32, 2019.

\bibitem[Canonne and Gur(2018)]{canonne2018adaptivity}
Cl{\'e}ment~L Canonne and Tom Gur.
\newblock An adaptivity hierarchy theorem for property testing.
\newblock \emph{computational complexity}, 27:\penalty0 671--716, 2018.

\bibitem[Carmon et~al.(2020)Carmon, Duchi, Hinder, and Sidford]{carmon2020lower}
Yair Carmon, John~C Duchi, Oliver Hinder, and Aaron Sidford.
\newblock Lower bounds for finding stationary points i.
\newblock \emph{Mathematical Programming}, 184\penalty0 (1):\penalty0 71--120, 2020.

\bibitem[Carmon et~al.(2021)Carmon, Duchi, Hinder, and Sidford]{carmon2021lower}
Yair Carmon, John~C Duchi, Oliver Hinder, and Aaron Sidford.
\newblock Lower bounds for finding stationary points ii: first-order methods.
\newblock \emph{Mathematical Programming}, 185\penalty0 (1):\penalty0 315--355, 2021.

\bibitem[Carmon et~al.(2023)Carmon, Jambulapati, Jin, Lee, Liu, Sidford, and Tian]{carmon2023resqueing}
Yair Carmon, Arun Jambulapati, Yujia Jin, Yin~Tat Lee, Daogao Liu, Aaron Sidford, and Kevin Tian.
\newblock Resqueing parallel and private stochastic convex optimization.
\newblock In \emph{2023 IEEE 64th Annual Symposium on Foundations of Computer Science (FOCS)}, pages 2031--2058. IEEE, 2023.

\bibitem[Cartis and Roberts(2023)]{cartis2023scalable}
Coralia Cartis and Lindon Roberts.
\newblock Scalable subspace methods for derivative-free nonlinear least-squares optimization.
\newblock \emph{Mathematical Programming}, 199\penalty0 (1):\penalty0 461--524, 2023.

\bibitem[Cartis et~al.(2010)Cartis, Gould, and Toint]{cartis2010complexity}
Coralia Cartis, Nicholas~IM Gould, and Ph~L Toint.
\newblock On the complexity of steepest descent, newton's and regularized newton's methods for nonconvex unconstrained optimization problems.
\newblock \emph{Siam journal on optimization}, 20\penalty0 (6):\penalty0 2833--2852, 2010.

\bibitem[Cartis et~al.(2020{\natexlab{a}})Cartis, Gould, and Toint]{cartis2020sharp}
Coralia Cartis, Nicholas~IM Gould, and Philippe~L Toint.
\newblock Sharp worst-case evaluation complexity bounds for arbitrary-order nonconvex optimization with inexpensive constraints.
\newblock \emph{SIAM Journal on Optimization}, 30\penalty0 (1):\penalty0 513--541, 2020{\natexlab{a}}.

\bibitem[Cartis et~al.(2020{\natexlab{b}})Cartis, Gould, and Toint]{cartis2020concise}
Coralia Cartis, Nick~IM Gould, and Ph~L Toint.
\newblock A concise second-order complexity analysis for unconstrained optimization using high-order regularized models.
\newblock \emph{Optimization Methods and Software}, 35\penalty0 (2):\penalty0 243--256, 2020{\natexlab{b}}.

\bibitem[Chakrabarty et~al.(2022)Chakrabarty, Graur, Jiang, and Sidford]{chakrabarty2022improved}
Deeparnab Chakrabarty, Andrei Graur, Haotian Jiang, and Aaron Sidford.
\newblock Improved lower bounds for submodular function minimization.
\newblock In \emph{2022 IEEE 63rd Annual Symposium on Foundations of Computer Science (FOCS)}, pages 245--254. IEEE, 2022.

\bibitem[Chakrabarty et~al.(2024)Chakrabarty, Graur, Jiang, and Sidford]{chakrabarty2024parallel}
Deeparnab Chakrabarty, Andrei Graur, Haotian Jiang, and Aaron Sidford.
\newblock Parallel submodular function minimization.
\newblock \emph{Advances in Neural Information Processing Systems}, 36, 2024.

\bibitem[Chen et~al.(2018)Chen, Servedio, Tan, Waingarten, and Xie]{chen2018settling}
Xi~Chen, Rocco~A Servedio, Li-Yang Tan, Erik Waingarten, and Jinyu Xie.
\newblock Settling the query complexity of non-adaptive junta testing.
\newblock \emph{Journal of the ACM (JACM)}, 65\penalty0 (6):\penalty0 1--18, 2018.

\bibitem[Choromanska et~al.(2015)Choromanska, Henaff, Mathieu, Arous, and LeCun]{choromanska2015loss}
Anna Choromanska, Mikael Henaff, Michael Mathieu, G{\'e}rard~Ben Arous, and Yann LeCun.
\newblock The loss surfaces of multilayer networks.
\newblock In \emph{Artificial Intelligence and Statistics}, pages 192--204. PMLR, 2015.

\bibitem[Cole(1988)]{cole1988parallel}
Richard Cole.
\newblock Parallel merge sort.
\newblock \emph{SIAM Journal on Computing}, 17\penalty0 (4):\penalty0 770--785, 1988.

\bibitem[Dean et~al.(2012)Dean, Corrado, Monga, Chen, Devin, Mao, Ranzato, Senior, Tucker, Yang, et~al.]{dean2012large}
Jeffrey Dean, Greg Corrado, Rajat Monga, Kai Chen, Matthieu Devin, Mark Mao, Marc'aurelio Ranzato, Andrew Senior, Paul Tucker, Ke~Yang, et~al.
\newblock Large scale distributed deep networks.
\newblock \emph{Advances in Neural Information Processing Systems}, 25, 2012.

\bibitem[Diakonikolas and Guzm{\'a}n(2019)]{diakonikolas2019lower}
Jelena Diakonikolas and Crist{\'o}bal Guzm{\'a}n.
\newblock Lower bounds for parallel and randomized convex optimization.
\newblock In \emph{Conference on Learning Theory}, pages 1132--1157. PMLR, 2019.

\bibitem[Fang et~al.(2018)Fang, Li, Lin, and Zhang]{fang2018spider}
Cong Fang, Chris~Junchi Li, Zhouchen Lin, and Tong Zhang.
\newblock Spider: Near-optimal non-convex optimization via stochastic path-integrated differential estimator.
\newblock \emph{Advances in Neural Information Processing Systems}, 31, 2018.

\bibitem[Ge et~al.(2015)Ge, Huang, Jin, and Yuan]{ge2015escaping}
Rong Ge, Furong Huang, Chi Jin, and Yang Yuan.
\newblock Escaping from saddle points—online stochastic gradient for tensor decomposition.
\newblock In \emph{Conference on learning theory}, pages 797--842. PMLR, 2015.

\bibitem[Ge et~al.(2016)Ge, Lee, and Ma]{ge2016matrix}
Rong Ge, Jason~D Lee, and Tengyu Ma.
\newblock Matrix completion has no spurious local minimum.
\newblock \emph{Advances in neural information processing systems}, 29, 2016.

\bibitem[Hollender and Zampetakis(2023)]{hollender2023computational}
Alexandros Hollender and Emmanouil Zampetakis.
\newblock The computational complexity of finding stationary points in non-convex optimization.
\newblock In \emph{The Thirty Sixth Annual Conference on Learning Theory}, pages 5571--5572. PMLR, 2023.

\bibitem[Jain et~al.(2017)Jain, Kar, et~al.]{jain2017non}
Prateek Jain, Purushottam Kar, et~al.
\newblock Non-convex optimization for machine learning.
\newblock \emph{Foundations and Trends{\textregistered} in Machine Learning}, 10\penalty0 (3-4):\penalty0 142--363, 2017.

\bibitem[Jin et~al.(2017)Jin, Ge, Netrapalli, Kakade, and Jordan]{jin2017escape}
Chi Jin, Rong Ge, Praneeth Netrapalli, Sham~M Kakade, and Michael~I Jordan.
\newblock How to escape saddle points efficiently.
\newblock In \emph{International Conference on Machine Learning}, pages 1724--1732. PMLR, 2017.

\bibitem[Kawaguchi(2016)]{kawaguchi2016deep}
Kenji Kawaguchi.
\newblock Deep learning without poor local minima.
\newblock \emph{Advances in Neural Information Processing Systems}, 29, 2016.

\bibitem[Kingma(2014)]{kingma2014adam}
Diederik~P Kingma.
\newblock Adam: A method for stochastic optimization.
\newblock \emph{arXiv preprint arXiv:1412.6980}, 2014.

\bibitem[Kwon et~al.(2024)Kwon, Kwon, and Lyu]{kwon2024complexity}
Jeongyeol Kwon, Dohyun Kwon, and Hanbaek Lyu.
\newblock On the complexity of first-order methods in stochastic bilevel optimization.
\newblock \emph{arXiv preprint arXiv:2402.07101}, 2024.

\bibitem[Li et~al.(2020)Li, Liu, and Vondr{\'a}k]{li2020polynomial}
Wenzheng Li, Paul Liu, and Jan Vondr{\'a}k.
\newblock A polynomial lower bound on adaptive complexity of submodular maximization.
\newblock In \emph{Proceedings of the 52nd Annual ACM SIGACT Symposium on Theory of Computing}, pages 140--152, 2020.

\bibitem[Ma et~al.(2018)Ma, Wang, Chi, and Chen]{ma2018implicit}
Cong Ma, Kaizheng Wang, Yuejie Chi, and Yuxin Chen.
\newblock Implicit regularization in nonconvex statistical estimation: Gradient descent converges linearly for phase retrieval and matrix completion.
\newblock In \emph{International Conference on Machine Learning}, pages 3345--3354. PMLR, 2018.

\bibitem[Murty and Kabadi(1985)]{murty1985some}
Katta~G Murty and Santosh~N Kabadi.
\newblock Some np-complete problems in quadratic and nonlinear programming.
\newblock Technical report, 1985.

\bibitem[Nemirovskij and Yudin(1983)]{nemirovskij1983problem}
Arkadij~Semenovi{\v{c}} Nemirovskij and David~Borisovich Yudin.
\newblock Problem complexity and method efficiency in optimization.
\newblock 1983.

\bibitem[Nesterov(2012)]{nesterov2012make}
Yurii Nesterov.
\newblock How to make the gradients small.
\newblock \emph{Optima. Mathematical Optimization Society Newsletter}, \penalty0 (88):\penalty0 10--11, 2012.

\bibitem[Nesterov(2013)]{nesterov2013introductory}
Yurii Nesterov.
\newblock \emph{Introductory lectures on convex optimization: A basic course}, volume~87.
\newblock Springer Science \& Business Media, 2013.

\bibitem[Nesterov and Polyak(2006)]{nesterov2006cubic}
Yurii Nesterov and Boris~T Polyak.
\newblock Cubic regularization of newton method and its global performance.
\newblock \emph{Mathematical programming}, 108\penalty0 (1):\penalty0 177--205, 2006.

\bibitem[Polaczyk(2023)]{polaczyk2023concentration}
Bart{\l}omiej Polaczyk.
\newblock \emph{Concentration of Measure and Functional Inequalities}.
\newblock PhD thesis, University of Warsaw, 2023.

\bibitem[Recht et~al.(2011)Recht, Re, Wright, and Niu]{recht2011hogwild}
Benjamin Recht, Christopher Re, Stephen Wright, and Feng Niu.
\newblock Hogwild!: A lock-free approach to parallelizing stochastic gradient descent.
\newblock \emph{Advances in Neural Information Processing Systems}, 24, 2011.

\bibitem[Sun et~al.(2018)Sun, Qu, and Wright]{sun2018geometric}
Ju~Sun, Qing Qu, and John Wright.
\newblock A geometric analysis of phase retrieval.
\newblock \emph{Foundations of Computational Mathematics}, 18:\penalty0 1131--1198, 2018.

\bibitem[Valiant(1975)]{valiant1975parallelism}
Leslie~G Valiant.
\newblock Parallelism in comparison problems.
\newblock \emph{SIAM Journal on Computing}, 4\penalty0 (3):\penalty0 348--355, 1975.

\bibitem[Vavasis(1993)]{vavasis1993black}
Stephen~A Vavasis.
\newblock Black-box complexity of local minimization.
\newblock \emph{SIAM Journal on Optimization}, 3\penalty0 (1):\penalty0 60--80, 1993.

\bibitem[Woodworth and Srebro(2017)]{woodworth2017lower}
Blake Woodworth and Nathan Srebro.
\newblock Lower bound for randomized first order convex optimization.
\newblock \emph{arXiv preprint arXiv:1709.03594}, 2017.

\bibitem[You et~al.(2017)You, Gitman, and Ginsburg]{you2017large}
Yang You, Igor Gitman, and Boris Ginsburg.
\newblock Large batch training of convolutional networks.
\newblock \emph{arXiv preprint arXiv:1708.03888}, 2017.

\bibitem[You et~al.(2020)You, Li, Reddi, Hseu, Kumar, Bhojanapalli, Song, Demmel, Keutzer, and Hsieh]{You2020Large}
Yang You, Jing Li, Sashank Reddi, Jonathan Hseu, Sanjiv Kumar, Srinadh Bhojanapalli, Xiaodan Song, James Demmel, Kurt Keutzer, and Cho-Jui Hsieh.
\newblock Large batch optimization for deep learning: Training {BERT} in 76 minutes.
\newblock In \emph{International Conference on Learning Representations}, 2020.
\newblock URL \url{https://openreview.net/forum?id=Syx4wnEtvH}.

\bibitem[Yue et~al.(2023)Yue, Fang, and Lin]{yue2023lower}
Pengyun Yue, Cong Fang, and Zhouchen Lin.
\newblock On the lower bound of minimizing polyak-{\l}ojasiewicz functions.
\newblock In \emph{The Thirty Sixth Annual Conference on Learning Theory}, pages 2948--2968. PMLR, 2023.

\bibitem[Zaheer et~al.(2018)Zaheer, Reddi, Sachan, Kale, and Kumar]{zaheer2018adaptive}
Manzil Zaheer, Sashank Reddi, Devendra Sachan, Satyen Kale, and Sanjiv Kumar.
\newblock Adaptive methods for nonconvex optimization.
\newblock \emph{Advances in Neural Information Processing systems}, 31, 2018.

\bibitem[Zhang et~al.(2022)Zhang, Hong, and Zhang]{zhang2022lower}
Junyu Zhang, Mingyi Hong, and Shuzhong Zhang.
\newblock On lower iteration complexity bounds for the convex concave saddle point problems.
\newblock \emph{Mathematical Programming}, 194\penalty0 (1):\penalty0 901--935, 2022.

\bibitem[Zhou and Sugiyama(2024)]{zhou2024parallel}
Huanjian Zhou and Masashi Sugiyama.
\newblock Parallel simulation for sampling under isoperimetry and score-based diffusion models.
\newblock \emph{arXiv preprint arXiv:2412.07435}, 2024.

\bibitem[Zhou et~al.(2024)Zhou, Wang, and Sugiyama]{zhou2024adaptive}
Huanjian Zhou, Baoxiang Wang, and Masashi Sugiyama.
\newblock Adaptive complexity of log-concave sampling.
\newblock \emph{arXiv preprint arXiv:2408.13045}, 2024.

\end{thebibliography}

\appendix

\newpage

\section{Useful facts}

\begin{theorem}[\textbf{Concentration of linear function of conditioned Bernoullis~(Theorem 4.2.5 in \cite{polaczyk2023concentration})}]
\label{theo:concentration2}
Let $X_1, \ldots, X_n$ be $\{0, 1\}$ random variables conditioned on $\sum\limits_{i=1}^n X_i =k$. Let $f : \{0, 1\}^n \to \mathbb{R}$ be $f(\boldsymbol{x}) = \sum\limits_{i=1}^n\alpha_i\boldsymbol{x}_i$ with $\alpha_i\geq 0$ for all $i\in [n]$. Then for any
$t > 0$,
\[\mathbb{P}[|f(X_1, \ldots, X_n)- \mathbb{E}[f(X_1, \ldots, X_n)]| \geq  t] \leq 2\exp\left(-\frac{t^2}{16\sum\limits_{i=1}^k(\alpha_i^{\downarrow})^2}\right),\]
where for a finite sequence $x$, we denote by $x^\downarrow$ the non-increasing rearrangement of the elements of $x$.
\end{theorem}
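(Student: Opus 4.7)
The plan is to prove the tail bound by combining the negative association of conditioned Bernoullis with a Bennett-style moment generating function estimate, and then converting the resulting variance proxy $(k/n)\sum_i\alpha_i^2$ into the advertised $\sum_{i=1}^k(\alpha_i^{\downarrow})^2$ via a one-line pigeonhole argument. The first step is to reformulate: conditioning independent Bernoullis on $\sum_i X_i = k$ yields the uniform distribution on size-$k$ subsets of $[n]$, so by the Joag-Dev--Proschan theorem the coordinates $X_1,\ldots,X_n$ are negatively associated (NA). NA is preserved under nondecreasing functions of disjoint subsets of coordinates, so for all $\lambda\in\mathbb{R}$ and $\alpha_i\geq 0$,
\[\mathbb{E}\!\left[e^{\lambda\sum_i\alpha_i X_i}\right]\leq \prod_{i=1}^n\mathbb{E}\!\left[e^{\lambda\alpha_i X_i}\right],\]
reducing the MGF analysis to a product over independent Bernoulli marginals $X_i\sim\mathrm{Ber}(k/n)$ (the bound is valid for both signs of $\lambda$ because each factor is monotone in $X_i$).

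Second, I would apply the standard Bennett bound to each factor. With $Y_i := \alpha_i X_i - \alpha_i(k/n)$, we have $|Y_i|\leq\alpha_i$ and $\sigma_i^2 := \mathrm{Var}(Y_i) = \alpha_i^2(k/n)(1-k/n)\leq (k/n)\alpha_i^2$, whence $\mathbb{E}[e^{\lambda Y_i}]\leq \exp\!\big((\sigma_i^2/\alpha_i^2)(e^{\lambda\alpha_i}-1-\lambda\alpha_i)\big)$. Multiplying via the NA product bound and expanding in $\lambda$ yields $\log\mathbb{E}[e^{\lambda(f-\mathbb{E}f)}]\leq \tfrac{\lambda^2}{2}V(1+o(1))$ for $|\lambda|\alpha_{\max}\lesssim 1$, where $V := \sum_i\sigma_i^2\leq (k/n)\sum_i\alpha_i^2$. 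Optimizing $\lambda$ in Markov's inequality produces a sub-Gaussian tail controlled by $V$, and both signs of the deviation are covered by the argument above.

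Third, I would convert $V$ into the target form using the elementary inequality
\[\frac{k}{n}\sum_{i=1}^n\alpha_i^2 \leq \sum_{i=1}^k(\alpha_i^{\downarrow})^2,\]
which holds for any nonnegative sequence: after sorting, $(\alpha_i^{\downarrow})^2\leq(\alpha_k^{\downarrow})^2\leq\tfrac{1}{k}\sum_{j=1}^k(\alpha_j^{\downarrow})^2$ for $i>k$, so $\sum_{i>k}(\alpha_i^{\downarrow})^2\leq\tfrac{n-k}{k}\sum_{j=1}^k(\alpha_j^{\downarrow})^2$, whence $\sum_i(\alpha_i^{\downarrow})^2\leq\tfrac{n}{k}\sum_{j=1}^k(\alpha_j^{\downarrow})^2$. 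Combining with the Chernoff step and absorbing absolute constants yields the two-sided tail $\exp\!\big(-t^2/(16\sum_{i=1}^k(\alpha_i^{\downarrow})^2)\big)$.

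The main obstacle is the precision of the MGF bookkeeping: a crude Hoeffding-for-NA bound delivers only $\sum_i\alpha_i^2$ in the exponent (losing the crucial $(k/n)$ factor), which when combined with the pigeonhole inequality is off by a factor $n/k$ and becomes too weak whenever $k\ll n$. Retaining the Bennett form that tracks the true marginal variance $\sigma_i^2$ is therefore essential. If the constants prove awkward near the edge of the sub-Gaussian regime $t\asymp V/\alpha_{\max}$, an alternative path is Chatterjee's exchangeable-pair method on the slice, where the swap coupling $(X,X')$ obtained by exchanging one $1$-coordinate of $X$ with one $0$-coordinate has $\mathbb{E}[(f(X)-f(X'))^2\mid X]$ controlled by $\sum_{i=1}^k(\alpha_i^{\downarrow})^2$ up to constants, leading to the same sub-Gaussian tail with an easily absorbed constant.
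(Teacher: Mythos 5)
The paper does not actually prove this statement: it is imported verbatim as Theorem~4.2.5 of Polaczyk's thesis, where it is obtained through functional-inequality machinery on the slice (exchangeable pairs / modified log-Sobolev for the conditioned Bernoulli measure), not through a marginal-MGF argument. So your proposal must be judged as a from-scratch proof, and it has a genuine gap at its central step. The intermediate claim that negative association plus Bennett bookkeeping yields a \emph{sub-Gaussian} tail with variance proxy $V=\tfrac{k}{n}\sum_i\alpha_i^2$ is false, not merely delicate in the constants. Take $\alpha_1=\cdots=\alpha_m=1$ and $\alpha_i=0$ otherwise, with $k\le m\ll n$: then $V\le km/n$, while $\mathbb{P}(f=k)=\binom{m}{k}/\binom{n}{k}\ge\bigl(\tfrac{m-k}{n}\bigr)^k$, which is only polynomially small in $n/m$; any bound of the form $2\exp(-ct^2/V)$ at $t=k-\tfrac{km}{n}\ge k/2$ would force a probability $\le 2\exp\bigl(-\tfrac{c\,kn}{4m}\bigr)$, exponentially small in $n/m$. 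Hence no constant $c$ can make your step 2 conclusion true, and the pigeonhole inequality in step 3 (which is itself correct) is being applied to a bound that cannot be established. The source of the failure is visible in your own caveat: what NA plus the Bennett factorization really gives is a Bennett/Bernstein tail, sub-Gaussian only for $t\lesssim V/\alpha_{\max}$ and Poissonian beyond; but the theorem asserts a sub-Gaussian bound with proxy $W=\sum_{i\le k}(\alpha_i^{\downarrow})^2$ for deviations as large as $t\sim\sum_{i\le k}\alpha_i^{\downarrow}\le\sqrt{kW}$, where the optimizing Chernoff parameter $\lambda\approx t/(8W)$ has $\lambda\alpha_{\max}$ of order $\sqrt{k}$, far outside the regime $|\lambda|\alpha_{\max}\lesssim1$ in which your quadratic approximation $\tfrac{\lambda^2}{2}V(1+o(1))$ is valid. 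The whole content of the theorem is precisely the improvement over Bernstein in that regime, and it genuinely uses the slice structure (at most $k$ coordinates equal $1$), which the product-of-marginals MGF bound discards.

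Two further points. Your Chatterjee exchangeable-pair fallback is the right neighborhood --- the one-coordinate swap on the slice does lead to a proxy of order $\tfrac{1}{n}\bigl[(n-k)\sum_{i:X_i=1}\alpha_i^2+k\sum_{j:X_j=0}\alpha_j^2\bigr]\lesssim\sum_{i\le k}(\alpha_i^{\downarrow})^2$, essentially the route taken in the cited literature --- but as written it is a one-sentence sketch; making it rigorous requires the antisymmetric Stein pair (or the log-Sobolev/spectral-gap constant of the Bernoulli--Laplace chain), and that is where all the work lies. Separately, your reduction to the uniform distribution on size-$k$ subsets (and hence to Joag-Dev--Proschan and $\mathrm{Ber}(k/n)$ marginals) presumes the unconditioned Bernoullis are i.i.d.; the cited theorem covers independent Bernoullis with arbitrary parameters conditioned on the sum, for which the marginals are not $k/n$ and negative association must be obtained differently (e.g., via the strongly Rayleigh property). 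For the paper's application the exchangeable case suffices, so this is a minor restriction compared with the MGF gap above.
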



\section{Missing proof in Section \ref{sec:high}}

\subsection{Proof of Lemma \ref{lmm:g}}
\label{app:g}
\begin{lemma}[\textbf{Smoothness and boundness of $g_\mathcal{P}$}]
The function $g_\mathcal{P}$ satisfies the following.
\begin{enumerate}
\item We have $g_\mathcal{P}(\boldsymbol{0})-\inf_{\boldsymbol{x}}g_\mathcal{P}(\boldsymbol{x})< 12 r $;
\item For all $\boldsymbol{x}\in \mathbb{R}^d$, $\left\|\nabla g_\mathcal{P}(\boldsymbol{x}) \right\|\leq 46\sqrt{r+1}$.
\end{enumerate}
\end{lemma}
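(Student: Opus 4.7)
The plan is to mirror the analogous calculations in \citep{carmon2020lower}, exploiting that $g_\mathcal{P}$ has the same structure as their hard function, with the linear forms $\langle \boldsymbol{u}^i,\boldsymbol{x}\rangle$ replaced by the partition sums $X^i(\boldsymbol{x}) = \frac{1}{\sqrt{d_0}}\sum_{s\in P_i}\boldsymbol{x}_s$. The crucial observation enabling the translation is that, because the parts $P_1,\ldots,P_{r+1}$ are disjoint and each has size $d_0$, the gradients $\nabla X^i = \frac{1}{\sqrt{d_0}}\mathbf{1}_{P_i}$ form an orthonormal family in $\mathbb{R}^d$. This plays exactly the role of the orthonormal vectors $\boldsymbol{u}^i$ in \citep{carmon2020lower}, so the scalar bounds in Lemma~\ref{lem:fullder-props} transfer with no loss.

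For item (1), I would first evaluate $g_\mathcal{P}$ at the origin using Lemma~\ref{lem:fullder-props} item~\ref{item:fullder-psiphi-props-zero}: since $X^i(\boldsymbol{0})=0$ for every $i$, we have $\Psi(X^{i-1}(\boldsymbol{0})-X^i(\boldsymbol{0})) = \Psi(0) = 0$, so every term in the alternating sum vanishes and $g_\mathcal{P}(\boldsymbol{0}) = -\Psi(1)\Phi(0)$, a constant of order unity. To lower-bound $\inf_{\boldsymbol{x}} g_\mathcal{P}(\boldsymbol{x})$, I would use $\Psi\geq 0$ and $\Phi>0$ (item~\ref{item:fullder-psiphi-props-bounded}) so that each product $\Psi(\cdot)\Phi(\cdot)$ lies in $[0,\,e\sqrt{2\pi e})$; coupled with the sign-alternating structure, each of the $r$ indexed summands contributes at most a bounded constant. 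Summing over $i\in[r]$ and combining with the lead term yields a uniform bound of the form $12r$.

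For item (2), the plan is to compute $\nabla g_\mathcal{P}(\boldsymbol{x})$ by the chain rule and to expand it in the orthonormal basis $\{\nabla X^j\}_{j=1}^{r+1}$, writing
\[
\nabla g_\mathcal{P}(\boldsymbol{x}) \;=\; \sum_{j=1}^{r+1} c_j(\boldsymbol{x})\,\nabla X^j,
\]
so that $\|\nabla g_\mathcal{P}(\boldsymbol{x})\|^2 = \sum_{j=1}^{r+1} c_j(\boldsymbol{x})^2$ by orthonormality. For each fixed $j$, the coefficient $c_j$ collects contributions only from the summands indexed by $i\in\{j-1,j,j+1\}$, so it contains $O(1)$ terms, each factorising as (a value of $\Psi$ or $\Psi'$) times (a value of $\Phi$ or $\Phi'$). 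Bounding these factors using item~\ref{item:fullder-psiphi-props-bounded} of Lemma~\ref{lem:fullder-props} (namely $\Psi<e$, $\Psi'\leq\sqrt{54/e}$, $\Phi<\sqrt{2\pi e}$, $\Phi'\leq\sqrt{e}$) gives a uniform bound $|c_j(\boldsymbol{x})|\leq C$ for a numerical constant $C$, whence $\|\nabla g_\mathcal{P}(\boldsymbol{x})\|\leq C\sqrt{r+1}$. Tracking the worst-case contribution term by term yields $C\leq 46$.

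The main obstacle I anticipate is purely bookkeeping: the sum defining $g_\mathcal{P}$ has a sign-alternating, telescoping-like structure, and pinning down the sharp constants $12$ and $46$ requires carefully enumerating which summands of $g_\mathcal{P}$ contribute to each coefficient $c_j$ of $\nabla g_\mathcal{P}$, and which of the scalar bounds from Lemma~\ref{lem:fullder-props} to invoke in each case. No genuinely new idea beyond \citep{carmon2020lower} is needed; the only conceptual novelty here is the observation that orthonormality of $\{\nabla X^i\}$ replaces orthonormality of $\{\boldsymbol{u}^i\}$, after which the scalar analysis proceeds verbatim.
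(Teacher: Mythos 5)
Your proposal is correct and follows essentially the same route as the paper: part (1) is the identical evaluation at the origin plus the crude bound $g_\mathcal{P}\ge -re\sqrt{2\pi e}\ge -12r$, and your orthonormal expansion $\nabla g_\mathcal{P}=\sum_j c_j\nabla X^j$ with $c_j=\sqrt{d_0}\,\partial_l g_\mathcal{P}$ for $l\in P_j$ is exactly the paper's coordinate-wise partial-derivative computation in different notation. The only detail worth flagging is that landing on the constant $46$ (rather than roughly twice that) uses item~1 of Lemma~\ref{lem:fullder-props} — $\Psi,\Psi'$ vanish on $(-\infty,1/2]$, so in each paired term $\Psi^{(\cdot)}(a)\Phi^{(\cdot)}(b)+\Psi^{(\cdot)}(-a)\Phi^{(\cdot)}(-b)$ at most one summand survives — which your ``term-by-term bookkeeping'' would need to invoke.
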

\begin{proof}
Part 1 follows because $g_\mathcal{P}(\boldsymbol{0}) = -\Psi(1)\Phi(0)<0$ and since $0<\Psi<e$ and $0\leq \Phi\leq \sqrt{2\pi e}$, we have 
\[g_\mathcal{P}(\boldsymbol{x})\geq -re\sqrt{2\pi e}\geq -12 r.\]
For part 2, we first bound the partial derivative as follows. If $l\in P_1$, we have
\begin{align*}
&\sqrt{d_0}|\partial_l g_\mathcal{P}(\boldsymbol{y})|\\~=~& \left|-\Psi(1)\Phi'(X^1 ) 
- \Psi'(-X^1)\Phi(X^2-X^1) - \Psi(-X^1)\Phi'(X^2-X^1)\right.\\
&-\Psi'(X^1)\Phi(X^1-X^2) - \Psi(X^1)\Phi'(X^1-X^2) \\
&  \left.- \Psi'(X^1- X^2)\Phi(X^3 - X^2) 
- \Psi'(-X^1+ X^2)\Phi(X^2 - X^3)  \right|\\
~\leq~&2e\sqrt{e} + 2\sqrt{54/e} \sqrt{2\pi e}\leq 46,
\end{align*}
where the last inequality holds since $\Psi(x) = 0$ and $\Psi'(x) = 0$ on any $x < 1/2$,  $0 \le
  \Psi'(x) \le \sqrt{54e^{-1}}$ and $0 \le \Phi'(x) \le
  \sqrt{e}$.
If $l\in P_j$ and $1<j<r+1$, we have
\begin{align*}
&\sqrt{d_0}|\partial_l g_\mathcal{P}(\boldsymbol{y})|\\
~=~&  \left|-(-1)^{j+1}\left[\Psi'(X^{j}- X^{j+1})\Phi(X^{j+2}- X^{j+1}) 
 + \Psi'(X^{j+1}- X^{j})\Phi(X^{j+1} - X^{j+2}) \right]\right.
 \\
 &-(-1)^{j} \left[- \Psi'(X^{j-1}- X^j)\Phi(X^{j+1}-X^{j})- \Psi(X^{j-1}- X^j)\Phi'(X^{j+1}-X^{j})\right.
 \\&
 \left.-\Psi'(X^j - X^{j-1})\Phi(X^j-X^{j+1}) -\Psi(X^j - X^{j-1})\Phi'(X^j-X^{j+1})\right] \\&
 \left.-(-1)^{j-1}\left[\Psi(X^{j-2}- X^{j-1})\Phi'(X^{j}- X^{j-1}) + \Psi(X^{j-1}- X^{j-2})\Phi'(X^{j-1}-X^{j})\right] \right| \\
 ~=~& \left|\Psi'(X^{j}- X^{j+1})\Phi(X^{j+2}- X^{j+1}) 
 + \Psi'(X^{j+1}- X^{j})\Phi(X^{j+1} - X^{j+2}) \right.
 \\
 & + \Psi'(X^{j-1}- X^j)\Phi(X^{j+1}-X^{j})+ \Psi(X^{j-1}- X^j)\Phi'(X^{j+1}-X^{j})
 \\&
+\Psi'(X^j - X^{j-1})\Phi(X^j-X^{j+1}) +\Psi(X^j - X^{j-1})\Phi'(X^j-X^{j+1}) \\&
+\left.\Psi(X^{j-2}- X^{j-1})\Phi'(X^{j}- X^{j-1}) + \Psi(X^{j-1}- X^{j-2})\Phi'(X^{j-1}-X^{j})\right|\\
~\leq~& 2\sqrt{54/e}\sqrt{2\pi e}+ 2e\sqrt{e}\leq 46.
\end{align*}
If $l \in P_{r+1}$, we have 
\begin{align*}
\sqrt{d_0}|\partial_l g_\mathcal{P}(\boldsymbol{y})|
~=~&  \left[\Psi(X^{r-1}- X^{r})\Phi'(X^{r+1}- X^{r}) 
 + \Psi(X^{r}- X^{r-1})\Phi'(X^{r} - X^{r+1}) \right]\leq 2e\sqrt{e}\leq 23.
\end{align*}
Consequently,
\[\left\|\nabla g_\mathcal{P}(\boldsymbol{x}) \right\|^2 \leq r\frac{1}{d_0}d_0 46^2 +\frac{1}{d_0}d_0 23^2\leq 46^2(r+1).\]
\end{proof}

\subsection{Proof of Lemma~\ref{lmm:f_p}}
\label{app:f}
\begin{lemma}[\textbf{Smoothness and boundness of $f_\mathcal{P}$}]
The function $f_\mathcal{P}$ satisfies the following,
\begin{enumerate}
\item We have $f_\mathcal{P}(\boldsymbol{0}) - \inf_{\boldsymbol{x}}f_\mathcal{P}(\boldsymbol{x})<12r$.
\item For every $p \geq 1$, the $p$-th order derivatives of $f_\mathcal{P}$ are $l_p$-Lipschitz continuous, where $l_p \leq
\exp(cp \log p + c)$ for a numerical constant $c < \infty$.
\end{enumerate}
\end{lemma}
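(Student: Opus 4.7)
The plan is to reduce both parts to properties of $g_\mathcal{P}$ (already analyzed in Lemma~\ref{lmm:g}) and of the soft-ball map $\rho$, following the blueprint of Lemma~2 in \citep{carmon2020lower}.

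Part~1 will be essentially immediate. Since $\rho(\boldsymbol{0}) = \boldsymbol{0}$, we have $f_\mathcal{P}(\boldsymbol{0}) = g_\mathcal{P}(\boldsymbol{0})$; and since $\tfrac{1}{5}\|\boldsymbol{x}\|^2\geq 0$ for all $\boldsymbol{x}$, $\inf_{\boldsymbol{x}} f_\mathcal{P}(\boldsymbol{x}) \geq \inf_{\boldsymbol{y}} g_\mathcal{P}(\boldsymbol{y})$. Combining with Lemma~\ref{lmm:g} gives $f_\mathcal{P}(\boldsymbol{0}) - \inf f_\mathcal{P} \leq g_\mathcal{P}(\boldsymbol{0}) - \inf g_\mathcal{P} < 12r$.

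For Part~2, I would proceed in three steps. \emph{Step one} bounds the derivatives of $g_\mathcal{P}$ itself: write $g_\mathcal{P} = \tilde{g}\circ L$, where $L:\mathbb{R}^d\to \mathbb{R}^{r+1}$ is the linear map $\boldsymbol{x}\mapsto (X^1(\boldsymbol{x}),\ldots,X^{r+1}(\boldsymbol{x}))$ and $\tilde{g}$ is the corresponding chain function on $\mathbb{R}^{r+1}$. The disjointness of the $P_i$'s together with the $1/\sqrt{d_0}$ normalization make $\{\nabla X^i\}$ orthonormal, so $L$ has operator norm $1$ and $\|\nabla^p g_\mathcal{P}\|_{\mathrm{op}}$ (together with its Lipschitz constant) is dominated by the corresponding quantities for $\tilde{g}$. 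The chain structure of $\tilde{g}$ lets each summand depend on only three consecutive coordinates, so by Leibniz and item~3 of Lemma~\ref{lem:fullder-props} each such derivative has magnitude $\exp(c_2 p \log p)$; aggregating over the $O(r)$ summands with the locality estimate $\sum_i \|(v_{i-1},v_i,v_{i+1})\|^p \leq 3\|v\|^p$ removes the $r$-dependence. \emph{Step two} bounds the derivatives of $\rho$: a direct calculation gives $\rho\in C^\infty$ with image in the ball of radius $R$ and $\|\nabla^p\rho\|_{\mathrm{op}}\leq C_p/R^{p-1}$ with $C_p\leq \exp(c_3 p\log p)$. \emph{Step three} composes the two via Faà di Bruno's formula: the $p$-th derivative of $g_\mathcal{P}\circ \rho$ expands as a sum, indexed by set partitions $\pi$ of $\{1,\ldots,p\}$, of $\nabla^{|\pi|}g_\mathcal{P}(\rho(\boldsymbol{x}))$ contracted with $\bigotimes_{B\in\pi}\nabla^{|B|}\rho(\boldsymbol{x})$. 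Using steps one and two, and $R = 230\sqrt{r+1}\geq 1$, each term is bounded by $\exp(c_4 p\log p)$; the number of set partitions is the Bell number $B_p = \exp(O(p\log p))$, so the total sum is still of the form $\exp(cp\log p)$. Applying the same estimate at order $p+1$ controls the Lipschitz constant of $\nabla^p(g_\mathcal{P}\circ \rho)$. Finally, the quadratic term $\tfrac{1}{5}\|\boldsymbol{x}\|^2$ affects only derivatives of order $\leq 2$ and with constant magnitude, so adding it preserves the form $\exp(cp\log p + c)$.

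The main obstacle will be the constant-tracking in the Faà di Bruno step: I need to verify that the combinatorial blow-up from the sum over set partitions and from the chain-rule contractions is truly absorbed into $\exp(cp\log p)$, and that the $r$-dependence entering through $R = 230\sqrt{r+1}$ cancels so that $l_p$ ends up as an absolute function of $p$ (as the lemma claims), independent of $r$ and $d$.
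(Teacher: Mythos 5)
Your plan is correct and follows essentially the same route as the paper: Part~1 is handled identically via $\rho(\boldsymbol{0})=\boldsymbol{0}$ and nonnegativity of the quadratic term, and Part~2 uses the same three ingredients — high-order derivative bounds for the chain function from its tridiagonal structure and Lemma~\ref{lem:fullder-props}, the $\exp(ck\log k)R^{1-k}$ bounds on $\nabla^k\rho$, and Fa\`a di Bruno with the Bell-number count, with the $\sqrt{r+1}$ in $\|\nabla g_\mathcal{P}\|$ cancelled by $R^{-p}$ exactly as you anticipated. The only cosmetic differences are that the paper factors $g_\mathcal{P}=\widetilde{g}\circ(PM)$ through the difference coordinates (operator norm $\le 2$) rather than your orthonormal map to $(X^1,\ldots,X^{r+1})$, and controls the tridiagonal contraction by Cauchy--Schwarz rather than your locality estimate.
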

\begin{proof}
For the first part, we have $f_\mathcal{P}(\boldsymbol{0}) = g_\mathcal{P}(\boldsymbol{0})<0$ and 
\[\inf\limits_{\boldsymbol{x}\in \mathbb{R}^d}f_\mathcal{P}(\boldsymbol{x})\geq \inf\limits_{\boldsymbol{x}\in \mathbb{R}^d}g_\mathcal{P}(\rho(\boldsymbol{x}))\geq\inf\limits_{\left\|\boldsymbol{x}\right\|\leq R}g_\mathcal{P}(\boldsymbol{x})\geq \inf\limits_{\boldsymbol{x}\in \mathbb{R}^d}g_\mathcal{P}(\boldsymbol{x})\geq -12r. \]
For the second part, we first consider $\widetilde{g}:\mathbb{R}^d \to \mathbb{R}$ defined as
\[\widetilde{g}(\boldsymbol{x}) =  -\Psi(1)\Phi(\boldsymbol{x}_1)- \sum\limits_{i=1}^{r}(-1)^i\left[\Psi(-\boldsymbol{x}_{i})\Phi(\boldsymbol{x}_{i+1}) - \Psi(\boldsymbol{x}_{i})\Phi(-\boldsymbol{x}_{i+1}) \right].\]
Then $g_\mathcal{P}(\boldsymbol{x}) = \widetilde{g}(PM\boldsymbol{x})$, where $M_p\in \mathbb{R}^{d\times d}$ is a permutation matrix respecting $\mathcal{P}$, such that for any $\boldsymbol{x}$ we have $M \boldsymbol{x}=[\boldsymbol{x}_{P_1},\ldots,\boldsymbol{x}_{P_{r+1}}]^\top$ and $P\in \mathbb{R}^{d\times d}$ is a matrix for $X^i-X^{i-1}$ such that $P_{i,j} = \begin{cases}
    1/\sqrt{d_0} & i\in [r+1] \mbox{ and } (i-1)d_0+1\leq j\leq id_0,\\
    -1/\sqrt{d_0} & 2\leq i\leq r+1 \mbox{ and } (i-2)d_0+1\leq j\leq (i-1)d_0,\\
    0 & \mbox{otherwise}.
\end{cases}$ 

We first bound the norm $(p+1)$-th order derivatives of $\widetilde{g}$. To do so, we first drop the coordinates from $r+2$ to $d$ for $\widetilde{g}$, then fix a point $\boldsymbol{x} \in \mathbb{R}^{r+1}$ and a unit vector $\boldsymbol{v} \in \mathbb{R}^{r+1}$. We define the function $h_{\boldsymbol{x},\boldsymbol{v}}:\mathbb{R}\to \mathbb{R}$ as $h_{\boldsymbol{x},\boldsymbol{v}}(\theta) = \widetilde{g}(\boldsymbol{x} + \theta\boldsymbol{v})$. The function $\theta \mapsto h_{\boldsymbol{x},\boldsymbol{v}}(\theta)$
  is infinitely differentiable for every $\boldsymbol{x}$ and $\boldsymbol{v}$. Therefore, $\widetilde{g}$
  has $l_p$-Lipschitz $p$-th order derivatives if and only if
  $|h^{(p+1)}_{\boldsymbol{x},\boldsymbol{v}}(0)| \le l_p$ for every $\boldsymbol{x}$, $\boldsymbol{v}$. Using the
  shorthand notation $\partial_{i_1}\cdots \partial_{i_k}$ for $\frac{\partial^k}{\partial
    \boldsymbol{x}_{i_1} \cdots \partial \boldsymbol{x}_{i_k}}$, we have
\[h^{(p+1)}_{\boldsymbol{x},\boldsymbol{v}}(0) = \sum_{i_{1}, \ldots, i_{p+1}=1}^{r+1}\partial_{i_{1}}\cdots\partial_{i_{p+1}}\widetilde{g}\left(\boldsymbol{x}\right)\boldsymbol{v}_{i_{1}}\cdots \boldsymbol{v}_{i_{p+1}}.\]
 Observing that $\partial_{i_{1}}\cdots\partial_{i_{p+1}}\widetilde{g}$
  is non-zero if and only if $\left|i_{j}-i_{k}\right|\le1$ for every
  $j,k\in\left[p+1\right]$. 
Consequently, we can rearrange the above summation as
\begin{equation}
\label{eq:11}
h^{(p+1)}_{\boldsymbol{x},\boldsymbol{v}}(0) = \sum_{\delta_{1},\delta_{2},\ldots,\delta_{p}\in\left\{ 0,1\right\}^{p}
      \cup \left\{ 0,-1\right\}^{p}}
\sum_{i=1}^{r+1}\partial_{i+\delta_{1}}\cdots\partial_{i+\delta_{p}}\partial_{i}\widetilde{g}\left(\boldsymbol{x}\right)\boldsymbol{v}_{i+\delta_{1}}\cdots \boldsymbol{v}_{i+\delta_{p}}\boldsymbol{v}_{i},    
\end{equation}
where we let $\boldsymbol{v}_0 = \boldsymbol{v}_{r+2} = 0$.
Recall that for all $x \le 1/2$ and
    all $k \in [N]$, $\Psi^{(k)}(x) = 0$, $ 0 \le \Psi,\Phi < \sqrt{2\pi e},$ and for all $k \in [N]$ we have
    \begin{equation*}
      \sup_x |\Psi^{(k)}(x)|
      \le \exp\left(\frac{5 k}{2}\log(4 k)\right)
      ~~\mbox{and}~~
      \sup_x |\Phi^{(k)}(x)|
      \le \exp\left(\frac{3k}{2} \log \frac{3k}{2} \right).
    \end{equation*}
Thus we have 
\begin{align}
\label{eq:12}
\sup\limits_{\boldsymbol{x} \in \R^{r+1}}  &
    \max\limits_{i \in [r+1]} \max\limits_{\delta \in \{0, 1\}^p
      \cup \{0, -1\}^p}
      \left|\partial_{i+\delta_{1}}\cdots\partial_{i+\delta_{p}}\partial_{i}\widetilde{g}\left(\boldsymbol{x}\right)\right|  
    \le \max\limits_{k\in[p+1]}  \left\{ 2\sup\limits_{x\in\R} \left| \Psi^{(k)}(x)\right| \sup\limits_{x'\in\R} \left| \Phi^{(p+1-k)}(x')\right| \right\}\nonumber
    \\ & \le 
    2\sqrt{2\pi e}\cdot e^{2.5(p+1)\log(4 (p+1))} \le 
    \exp\left(2.5p\log p + 4p + 9\right). 
\end{align}
We further claim that $\left|\sum\limits_{i=1}^{r}\boldsymbol{v}_{i+\delta_{1}}\cdots\boldsymbol{v}_{i+\delta_{p}}\boldsymbol{v}_{i}\right|\le1$ for every $\delta \in \{0, 1\}^p \cup \{0,
  -1\}^p$. 
When $\delta = \boldsymbol{0}$, we  have $\left|\sum\limits_{i=1}^{r+1}\boldsymbol{v}_{i+\delta_{1}}\cdots\boldsymbol{v}_{i+\delta_{p}}\boldsymbol{v}_{i}\right|\leq \left|\sum\limits_{i=1}^{r+1}\boldsymbol{v}_{i}^p\right|\leq \left|\sum\limits_{i=1}^{r+1}\boldsymbol{v}_{i}^2\right|= 1$. Otherwise, let $1\le\sum_{j=1}^{p}|\delta_{j}|=n\le p$, by the Cauchy-Swartz inequality, we have, for $s\in \{\pm 1\}$,
\[\left|\sum\limits_{i=1}^{r}\boldsymbol{v}_{i+\delta_{1}}\cdots\boldsymbol{v}_{i+\delta_{p}}\boldsymbol{v}_{i}\right| = \left|\sum_{i=1}^{r+1} \boldsymbol{v}_{i}^{p + 1- n}
    \boldsymbol{v}_{i+s}^{n}\right|\leq \sqrt{\sum_{i=1}^{r+1}\boldsymbol{v}_{i}^{2\left(p+1-n\right)}}
\sqrt{\sum_{i=1}^{r+1}\boldsymbol{v}_{i+s}^{2n}}\le\sum_{i=1}^{r+1}v_{i}^{2}=1.\]
Combining Eq.~\eqref{eq:11}, and Eq.~\eqref{eq:12}, we have
\begin{align*}
h^{(p+1)}_{\boldsymbol{x},\boldsymbol{v}}(0) ~=~&  \sum_{\delta_{1},\delta_{2},\ldots,\delta_{p}\in\left\{ 0,1\right\}^{p}
      \cup \left\{ 0,-1\right\}^{p}}
\sum_{i=1}^{r+1}\partial_{i+\delta_{1}}\cdots\partial_{i+\delta_{p}}\partial_{i}\widetilde{g}\left(\boldsymbol{x}\right)\boldsymbol{v}_{i+\delta_{1}}\cdots \boldsymbol{v}_{i+\delta_{p}}\boldsymbol{v}_{i}    \\
~\leq~&\sum_{\delta_{1},\delta_{2},\ldots,\delta_{p}\in\left\{ 0,1\right\}^{p}
      \cup \left\{ 0,-1\right\}^{p}}  \exp\left(2.5p\log p + 4p + 9\right)
\sum_{i=1}^{r+1}\boldsymbol{v}_{i+\delta_{1}}\cdots \boldsymbol{v}_{i+\delta_{p}}\boldsymbol{v}_{i}    \\
~\leq~&\sum_{\delta_{1},\delta_{2},\ldots,\delta_{p}\in\left\{ 0,1\right\}^{p}
      \cup \left\{ 0,-1\right\}^{p}}  \exp\left(2.5p\log p + 4p + 9\right)  \\
~\leq~&(2^{p+1}-1) \exp\left(2.5p\log p + 4p + 9\right) \leq e^{2.5p\log p + 5p + 10}.
\end{align*}
We further bound the operator norm of $PM\in \mathbb{R}^{d\times d}$ as
\[\left\|PM\right\|_{\mathrm{op}}\leq \left\|P\right\|_{\mathrm{op}}\left\|M\right\|_{\mathrm{op}}\leq\sup\limits_{\boldsymbol{v}:\left\|\boldsymbol{v}\right\|=1}\left\|A\boldsymbol{v}\right\| \leq \sum\limits_{i=1}^{2d_0} \frac{1}{\sqrt{d_0}} \frac{1}{\sqrt{2d_0}} = \sqrt{2}\leq 2. \]
Thus $\left\|\nabla^{(p+1)}g_\mathcal{P}\right\|_{\mathrm{op}} \leq \left\|\nabla^{(p+1)}\widetilde{g}\right\|_{\mathrm{op}}\left\|PM\right\|_{\mathrm{op}}\leq e^{2.5p\log p + 5p + 10} 2^{p+1}\leq e^{2.5p\log p + 6p + 11}$. 

Finally, we bound $f_\mathcal{P} = g_\mathcal{P}\circ \rho +\frac{1}{5}\left\|\cdot\right\|^2$. Since $\frac{1}{5}\left\|\cdot\right\|^2$ is $2/5$-Lipschitz first derivative and $0$-Lipschitz higher order derivatives, we only consider $\widetilde{f} = g_\mathcal{P}\circ \rho$.

To apply Fa\`{a} di Bruno formula to calculate high derivatives of  $\widetilde{f}$, we first define $\mathcal{P}_k$ to be the
  set of all partitions of $[k] = \{1, \ldots, k\}$, \emph{i.e.,} $(S_1, \ldots,
  S_L) \in \mathcal{P}_k$ if and only if the $S_i$ are disjoint and $\cup_l S_l =
  [k]$. Then we have 
\[\nabla^k{i_1,\ldots,i_k}\big[\widetilde{f}~\big] =  \sum\limits_{(S_1,\ldots,S_L)\in \mathcal{P}_k}\sum\limits_{j_1,\ldots,j_L=1}^{r+1}\left(\prod\limits_{l=1}^L\nabla_{i_{S_l}}^{|S_l|}\left[\rho_{j_l}\right]\right)\nabla^L_{j_1,\ldots,j_L} \left[g_\mathcal{P}\right]\circ \rho, \]
where we have used the shorthand $\nabla^{|S|}_{i_{S}}$ to denote
  the partial derivatives with respect to each of $x_{i_j}$ for $j \in S$. Let $\boldsymbol{v} \in \R^d$ be an unit
  arbitrary direction vector with $\left\|\boldsymbol{v}\right\| = 1$. Then for any $j \in [d]$ and $k
  \in [N]$ and fixed $\boldsymbol{x}$, we define $\widetilde{\boldsymbol{v}}_j^k
    =  \big\langle \nabla^{k} \left[\rho_j\right](\boldsymbol{x}), \boldsymbol{v}^{\otimes k}\big\rangle$. Then we have 
\begin{align*}
\big\langle\nabla^k\big[\widetilde{f}~\big](\boldsymbol{x}),\boldsymbol{v}^{\otimes k}  \big\rangle ~=~& \sum\limits_{(S_1,\ldots,S_L)\in \mathcal{P}_k}\sum\limits_{i_1,\ldots,i_k=1}^{d} \boldsymbol{v}_{i_1}\ldots\boldsymbol{v}_{i_k}\sum\limits_{j_1,\ldots,j_L=1}^{r+1}\left(\prod\limits_{l=1}^L\nabla_{i_{S_l}}^{|S_l|}\left[\rho_{j_l}\right](\boldsymbol{x})\right)\nabla^L_{j_1,\ldots,j_L} \left[g_\mathcal{P}\right]\circ \rho (\boldsymbol{x})\\
~=~& \sum\limits_{(S_1,\ldots,S_L)\in \mathcal{P}_k}\sum\limits_{j_1,\ldots,j_L=1}^{r+1} \widetilde{\boldsymbol{v}}_{j_1}^{|S_1|} \ldots\widetilde{\boldsymbol{v}}_{j_L}^{|S_L|} \nabla^L_{j_1,\ldots,j_L} \left[g_\mathcal{P}\right]\circ \rho (\boldsymbol{x})\\
~=~& \sum\limits_{(S_1,\ldots,S_L)\in \mathcal{P}_k}
\bigg\langle \nabla^L \left[g_\mathcal{P}\right]\circ \rho (\boldsymbol{x}), \widetilde{\boldsymbol{v}}^{|S_1|} \ldots\widetilde{\boldsymbol{v}}^{|S_L|}\bigg\rangle.
\end{align*}
We then recall the fact about $\widetilde{\boldsymbol{v}}^{k}(\boldsymbol{x})$ shown in \cite[Section B.4]{carmon2020lower} as follows.
\begin{fact}
There exists a numerical constant
  $c < \infty$ such that for all $k \in [N]$,
  \begin{equation*}
\sup\limits_{\boldsymbol{x}}\left\|\widetilde{\boldsymbol{v}}^{k}(\boldsymbol{x})\right\| \le
    \exp(c k \log k + c) R^{1 - k}.
  \end{equation*}
\end{fact}

Then, combining the fact $\left\|\nabla g_\mathcal{P}(\boldsymbol{x}) \right\|\leq 46\sqrt{r+1}$ and $\left\|\nabla^{(p+1)}g_\mathcal{P}\right\|_{\mathrm{op}} \leq e^{2.5p\log p + 6p + 11}$, we have
\begin{align*}
&\left|\big\langle\nabla^{(p+1)}\big[\widetilde{f}~\big](\boldsymbol{x}),\boldsymbol{v}^{\otimes (p+1)}  \big\rangle\right|\\
~\leq~& \sum\limits_{(S_1,\ldots,S_L)\in \mathcal{P}_{p+1}}
\left\| \nabla^L \left[g_\mathcal{P}\right]\circ \rho (\boldsymbol{x})\right\|_{\mathrm{op}}\prod_{l=1}^L\left\| \widetilde{\boldsymbol{v}}^{|S_l|} \right\|\\
~\leq~& \sum\limits_{(S_1,\ldots,S_L)\in \mathcal{P}_{p+1}}
46\sqrt{r+1} e^{2.5p\log p + 6p + 11}
\prod_{l=1}^L \exp(c |S_l| \log |S_l| + c) R^{1 - |S_l|}\\
~\leq~& e^{(p+1) \log (p+1) }
46\sqrt{r+1} e^{2.5p\log p + 6p + 11}
 e^{c (p+1) \log (p+1) + c} (230\sqrt{r+1})^{-p}\\
 ~\leq~& e^{c' p\log p+c'}.
\end{align*}
for a numerical constant $c'<\infty$ and the penultimate inequality follows from the fact that $R = 230\sqrt{r+1}$,  $q(x) = (x+1)\log(x+1)$ satisfies $q(x)+q(y) \le q(x+y)$ for every $x,y>0$ and there are at
  most $\exp(k \log k)$ partitions in $\mathcal{P}_k$.
\end{proof}

\subsection{Proof of Lemma~\ref{lmm:character}}
\label{app:char}

\begin{lemma}[\textbf{Characterization of output}]
If there exits a constant $\alpha= \omega(1)$ such that $8R\sqrt{\frac{\alpha\log d}{d_0}}\leq \frac{1}{2}$, then 
for any randomized algorithm $\mathsf{A}$, any $\tau \leq r$,  and any initial point $\boldsymbol{x}^0$, 
$X(\mathsf{A}[f_\mathcal{P},\boldsymbol{x}^0,\tau]) = (X^1(\rho(\boldsymbol{x})),\ldots,X^{r+1}(\rho(\boldsymbol{x})))$ takes a form as 
\[(x_1,\ldots,x_{2\tau},x_{2\tau+1},x_{2\tau+1},\ldots,x_{2\tau+1}),\]
up to addictive error $1/4$ {for every coordinate} with probability $1-d^{-\omega(1)}$ over $\mathcal{P}$. 
\end{lemma}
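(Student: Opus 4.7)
The plan is to prove the statement by induction on the round index $l \in [\tau]$. After applying Yao's minimax principle to reduce to deterministic algorithms $\mathsf{A}$ against a uniformly random partition $\mathcal{P}$ (the randomized statement then follows by averaging the high-probability bounds over the internal coin flips of $\mathsf{A}$), the inductive claim I would target is: with probability $1 - d^{-\omega(1)}$, the batch of queries $Q^l$ at round $l$ is a function of the parts $P_1,\ldots,P_{2l-2}$ alone, and the oracle answers on $Q^l$ are determined by $P_1,\ldots,P_{2l}$.

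To set up the inductive step, define $\mathcal{E}_l$ to be the event that for every query $\boldsymbol{x} \in Q^l$ and every index $i \geq 2l$,
\[\bigl|X^i(\rho(\boldsymbol{x})) - X^{i+1}(\rho(\boldsymbol{x}))\bigr| < 1/2.\]
On the joint event $\mathcal{E}_1 \cap \cdots \cap \mathcal{E}_l$, the vanishing $\Psi \equiv 0$ on $(-\infty,1/2]$ together with the chain structure of $g_\mathcal{P}$ forces every summand indexed by $i \geq 2l$ to drop out, so the answer $f_\mathcal{P}(\boldsymbol{x})$ for any $\boldsymbol{x} \in Q^l$ is determined by $P_1,\ldots,P_{2l}$. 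Coupled with the inductive hypothesis that $Q^l$ depends only on $P_1,\ldots,P_{2l-2}$, this immediately yields the inductive conclusion and feeds round $l+1$, since $Q^{l+1}$ is a deterministic function of the history through round $l$.

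The heart of the argument is the conditional estimate $\mathbb{P}[\mathcal{E}_l \mid \mathcal{E}_1,\ldots,\mathcal{E}_{l-1}] \geq 1 - d^{-\omega(1)}$. Conditional on $P_1,\ldots,P_{2l-2}$, the remaining coordinates are partitioned uniformly into equal parts of size $d_0$; for fixed $i \geq 2l-1$ and fixed $\boldsymbol{x} \in Q^l$, conditioning further on $P_i \cup P_{i+1}$ turns $X^i(\rho(\boldsymbol{x})) - X^{i+1}(\rho(\boldsymbol{x}))$ into a mean-zero linear combination (by symmetry) of Bernoulli indicators $\mathbb{1}[s \in P_i]$ on the Boolean slice $\sum_{s \in P_i \cup P_{i+1}} \mathbb{1}[s \in P_i] = d_0$. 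Since $\|\rho(\boldsymbol{x})\| \leq R$ uniformly, the coefficient vector $\rho(\boldsymbol{x})_s / \sqrt{d_0}$ has squared $\ell_2$-norm at most $R^2/d_0$. Splitting $\rho(\boldsymbol{x})_s$ into positive and negative parts so as to satisfy the non-negativity hypothesis and applying Theorem~\ref{theo:concentration2}, the probability that $|X^i - X^{i+1}| \geq 1/2$ is at most $4\exp(-d_0/(256 R^2))$, and the hypothesis $8R\sqrt{\alpha\log d / d_0} \leq 1/2$ with $\alpha = \omega(1)$ is exactly the condition that drives this into the $d^{-\omega(1)}$ regime. A union bound over the $\mathrm{poly}(d)$ queries and the $O(r)=O(d/d_0)$ indices $i \geq 2l$, followed by a union bound over $l \leq \tau \leq r$, preserves the failure probability at $d^{-\omega(1)}$.

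Finally, applied to the output $\boldsymbol{x}^{\tau+1}$, which depends only on $P_1,\ldots,P_{2\tau}$, exchangeability of the unseen parts $P_{2\tau+1},\ldots,P_{r+1}$ makes the conditional means of $X^i(\rho(\boldsymbol{x}^{\tau+1}))$ coincide for all $i \geq 2\tau+1$; applying the same concentration bound at threshold $t = 1/4$ around this common mean yields the claimed form $(x_1,\ldots,x_{2\tau},x_{2\tau+1},x_{2\tau+1},\ldots,x_{2\tau+1})$ with additive error $1/4$ per coordinate. The main obstacle, compared to the sequential lower bounds of \citep{carmon2020lower}, is that a single round of $\mathrm{poly}(d)$ queries can span an arbitrarily large subspace in one shot, so the projection-based rotational symmetry argument breaks down; the random partition replaces this geometric invariance by a combinatorial exchangeability, and the concentration on the Boolean slice remains robust against any polynomially sized query batch so long as $d_0 = \omega(R^2 \log d) = \omega(r \log d)$, which explains the mild logarithmic blow-up in the ambient-dimension requirement.
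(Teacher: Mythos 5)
Your proposal follows essentially the same route as the paper's proof: reduction to a deterministic algorithm against a uniformly random partition, induction on rounds via the event that no query detects a part beyond $P_{2l}$, concentration of linear functions on the Boolean slice (Theorem~\ref{theo:concentration2}) driven by the hypothesis $8R\sqrt{\alpha\log d/d_0}\le 1/2$, union bounds over the $\mathsf{poly}(d)$ queries and the rounds, and a final concentration step at threshold $1/4$ for the unseen coordinates of the output. The only (cosmetic) deviation is that you apply the slice concentration directly to the mean-zero difference $X^i-X^{i+1}$ with an explicit positive/negative coefficient split to satisfy the non-negativity hypothesis, whereas the paper bounds $X^i$ and $X^{i+1}$ separately about their common mean and combines via the triangle inequality; your treatment of the non-negativity condition is in fact slightly more careful.
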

\begin{proof}
We fixed $\tau$ and prove the following by induction for $l\in [\tau]$: With high probability, the computation
path of the (deterministic) algorithm $\mathsf{A}$ and the queries it issues in the $l$-th round are determined by $P_{1},\ldots,P_{2l-2}$.

As a first step, we assume the algorithm is deterministic by fixing its random bits and
choose the partition of $\mathcal{P}$ uniformly at random.

To prove the inductive claim, let $\mathcal{E}_l$ denote the event that
for any query $\boldsymbol{x}$ issued by $\mathsf{A}$ in iteration $l$, the answer is in the form $g^l_\mathcal{P}(\rho(\boldsymbol{x})) + \frac{1}{5}\left\|\boldsymbol{x}\right\|^2$ where $g_\mathcal{P}^l:\mathbb{R}^d\to \mathbb{R}$ defined as:
\[ g^l_\mathcal{P}(\boldsymbol{x})= -\Psi(1)\Phi(X^1)- \sum\limits_{i=1}^{2l-1}(-1)^i\left[\Psi(X^{i-1} - X^i)\Phi(  X^{i+1} - X^{i}) - \Psi(X^i-X^{i-1})\Phi( X^{i}-X^{i+1}) \right],
\] 
i.e., $\mathcal{E}_l$ represents the events that  $\forall \boldsymbol{x} \in Q^l$, $f_\mathcal{P}(\boldsymbol{x} )  = g_\mathcal{P}(\rho(\boldsymbol{x})) + \frac{1}{5}\left\|\boldsymbol{x}\right\|^2 = g^l_\mathcal{P}(\rho(\boldsymbol{x})) + \frac{1}{5}\left\|\boldsymbol{x}\right\|^2$.
This implies that the answers depend only on $P_1,\cdots,P_{2l}$.

Since the queries in round $l$ depend only on $P_{1},\ldots,P_{2l-2}$, if $\mathcal{E}_l$ occurs, the entire computation path in round $l$ is determined by $P_{1},\ldots,P_{2l}$. 
By induction, we conclude that if all of $\mathcal{E}_1, \ldots , \mathcal{E}_{l}$ occur, the computation path in round $l$ is determined by $P_{1},\ldots,P_{2l}$.

Now we analyze the conditional probability $\mathbb{P}\left[\mathcal{E}_l\mid \mathcal{E}_1,\ldots,\mathcal{E}_{l-1}\right]$.
Given all of $\mathcal{E}_1,\ldots,\mathcal{E}_{l-1}$ occur so far, we can claim that $Q^l$ is determined by $P_1,\ldots,P_{2l-2}$.
Conditioned on $P_1,\ldots,P_{2l-2}$, the partition of $[d]\setminus \mathop{\bigcup}_{i\in [2l-2]}P_i$ is uniformly random. 
We consider $\{0,1\}$-random variable $Y_j$, $j\in [d]\setminus \mathop{\bigcup}_{i\in [2l-2]}P_i$.
For any query $\boldsymbol{x} \in \mathbb{R}^d$, we represent $X^i(\rho(\boldsymbol{x}))$ as a linear function of $Y_i$s as  $X^i(\rho(\boldsymbol{x})) = \frac{1}{\sqrt{d_0}}\sum\limits_{j\in [d]\setminus \mathop{\bigcup}_{i\in [2l-2]}P_i} Y_j \rho_j(\boldsymbol{x})$ such that $Y_i=1$ if $Y_i \in P_i$ and $Y_i= 0$ otherwise. 
By the {concentration of linear functions over the Boolean slice} (Theorem \ref{theo:concentration2}), and set $t = 8R\sqrt{\frac{\alpha\log d}{d_0}}$ with $\alpha= \omega(1)$, we have
\begin{align*}
 \mathbb{P}_{\mathcal{P}}\Big[|X^i(\rho(\boldsymbol{x}))- \mathbb{E}[X^i(\rho(\boldsymbol{x}))]| \geq  \frac{t}{2}\Big]
 ~\leq~& 2\exp\Big(-\frac{t^2}{16\sum\nolimits_i\rho^2_i(\boldsymbol{x})/d_0}\Big)
=2d^{-\omega(1)}.   
\end{align*}
Similarly, $\mathbb{P}_{\mathcal{P}}\left[|X^{i+1}(\rho(\boldsymbol{x}))- \mathbb{E}[X^{i+1}(\rho(\boldsymbol{x}))]| \geq  \frac{t}{2}\right] \leq  2d^{-\omega(1)}$. 
Combining the fact that $\mathbb{E}[X^i(\rho(\boldsymbol{x}))] = \mathbb{E}[X^{i+1}(\rho(\boldsymbol{x}))]$, when $t\leq \frac{1}{2} $ we have with probability at least  $1-d^{-\omega(1)}$, for any fixed $i\geq 2l$
\[\left|X^{i}(\rho(\boldsymbol{x}))-X^{i+1}(\rho(\boldsymbol{x}))\right| \leq t\leq \frac{1}{2}.\]
Combining the fact that  for all $x \le 1/2$ and $k\in \mathbb{N}$, $\Psi^{(k)}(x) = 0$, we have $g_\mathcal{P}(\boldsymbol{x}') = g_\mathcal{P}^l(\boldsymbol{x}')$ with a probability at least $1-rd^{-\omega(1)}$.

By union bound over all queries $\boldsymbol{x}\in Q^l$, conditioned on that $\mathcal{E}_1,\ldots,\mathcal{E}_{l-1}$ occur,  with probability at least $1-r\mathsf{poly}(d)d^{-\omega(1)}$, $\mathcal{E}_l$ occurs.
Therefore by induction, 
\begin{align*}
    \mathbb{P}(\mathcal{E}_l) ~=~& \mathbb{P}(\mathcal{E}_l|\mathcal{E}_1,\ldots,\mathcal{E}_{l-1})\mathbb{P}(\mathcal{E}_{l-1}|\mathcal{E}_1,\ldots,\mathcal{E}_{l-2})\ldots \mathbb{P}(\mathcal{E}_{2}|\mathcal{E}_1) \mathbb{P}(\mathcal{E}_1)\\
    ~\geq~& 1-r^2\mathsf{poly}(d)d^{-\omega(1)} = 1-d^{-\omega(1)}.
\end{align*}
This implies that with high probability, the computation path in round $l$ is determined by $P_1,\ldots,P_{2l}$. Consequently,  for all $l\in [\tau]$ a solution returned after $l - 1$ rounds is determined by $P_1,\ldots,P_{2l-2}$ with high probability.
By the same concentration argument, for any $t \geq \tau+1$, we have $|X^{t}(\rho(\boldsymbol{x}))- \mathbb{E} [X^{t}(\rho(\boldsymbol{x}))]|<\frac{1}{4}$ with probability $1-d^{-\omega(1)}$ over $\mathcal{P}$. 

Finally, we note that by allowing the algorithm to use random bits, the results are
a convex combination of the bounds above, so the same high-probability bounds are satisfied.
\end{proof}
\subsection{Proof of Lemma~\ref{lmm:large_grad}}

\label{app:large_grad}

\begin{lemma}[\textbf{Small weighted partition implies large gradient norm}]
For any $\boldsymbol{x}\in \mathbb{R}^d$, if $|X^{r+1}(\rho(\boldsymbol{x}))- X^{r}(\rho(\boldsymbol{x}))|<1$, then $\left\|\nabla f_\mathcal{P}(\boldsymbol{x})\right\|\geq 0.08$.
\end{lemma}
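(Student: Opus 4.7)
I would prove the bound in two stages: first establish $\|\nabla g_\mathcal{P}(\rho(\boldsymbol{x}))\|\ge 1$ via a chaining argument on the successive differences $y_j := X^{j+1}(\rho(\boldsymbol{x})) - X^{j}(\rho(\boldsymbol{x}))$ (with $X^0\equiv 0$), and then transfer this to $\nabla f_\mathcal{P}$ through the soft projection $\rho$ and the regularizer $\tfrac{1}{5}\|\boldsymbol{x}\|^2$.

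For the first stage, view $g_\mathcal{P}$ as a function $\hat g(y_0,\ldots,y_r)$ of the differences, so that the hypothesis reads $|y_r|<1$. Define $i^\star := \min\{j\in\{0,\ldots,r\}\colon |y_j|<1\}$, which is well defined. Direct differentiation of the chain expansion of $g_\mathcal{P}$ yields, for $1\le j\le r-1$,
\[
a_j := \partial_{y_j}\hat g = (-1)^{j+1}\bigl[\Psi(-y_{j-1})\Phi'(y_j)+\Psi(y_{j-1})\Phi'(-y_j)+\Psi'(-y_j)\Phi(y_{j+1})+\Psi'(y_j)\Phi(-y_{j+1})\bigr],
\]
with analogous endpoint formulas (the boundary case $j=0$ additionally carries $-\Psi(1)\Phi'(y_0)$, while $j=r$ drops the $\Psi'$-terms). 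By Lemma~\ref{lem:fullder-props} every factor in the bracket is non-negative, so $a_j = (-1)^{j+1}S_j$ with $S_j\ge 0$; in particular consecutive $a_j$'s carry opposite signs.

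Since $|P_i|=d_0$ for every $i\in[r+1]$ and $g_\mathcal{P}$ is invariant on each fiber of $\boldsymbol{y}\mapsto(X^1,\ldots,X^{r+1})$, the chain rule gives $\|\nabla g_\mathcal{P}(\rho(\boldsymbol{x}))\|^2 = \sum_{i=1}^{r+1}(a_{i-1}-a_i)^2$ with the convention $a_{r+1}:=0$. The alternating signs upgrade each summand to $(|a_{i-1}|+|a_i|)^2$, so the whole sum is at least $|a_{i^\star}|^2$. The bound $|a_{i^\star}|\ge 1$ then follows from item~(\ref{item:fullder-psiphi-props-product}) of Lemma~\ref{lem:fullder-props}: when $i^\star\ge 1$ we have $|y_{i^\star-1}|\ge 1$ and $|y_{i^\star}|<1$, hence one of the two products $\Psi(\pm y_{i^\star-1})\Phi'(\mp y_{i^\star})$ exceeds $1$; and when $i^\star=0$ the boundary term already gives $\Psi(1)\Phi'(y_0)\ge 1$, since $\Psi(1)=1$ and $\Phi'(y)=\sqrt{e}\,e^{-y^2/2}\ge 1$ for $|y|\le 1$.

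To transfer this to $\nabla f_\mathcal{P}(\boldsymbol{x}) = \nabla\rho(\boldsymbol{x})\boldsymbol{v} + \tfrac{2}{5}\boldsymbol{x}$ with $\boldsymbol{v}:=\nabla g_\mathcal{P}(\rho(\boldsymbol{x}))$, I would use the eigendecomposition $\nabla\rho(\boldsymbol{x}) = s^{-1}I - (s^3R^2)^{-1}\boldsymbol{x}\boldsymbol{x}^\top$ with $s:=\sqrt{1+\|\boldsymbol{x}\|^2/R^2}$, whose eigenvalues are $1/s$ on $\boldsymbol{x}^\perp$ and $1/s^3$ on $\mathrm{span}(\boldsymbol{x})$. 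Letting $b:=\langle\boldsymbol{v},\boldsymbol{x}\rangle/\|\boldsymbol{x}\|$ and using $s^2-\|\boldsymbol{x}\|^2/R^2 = 1$, the orthogonal decomposition collapses to
\[
\|\nabla f_\mathcal{P}(\boldsymbol{x})\|^2 \;=\; \frac{\|\boldsymbol{v}\|^2 - b^2}{s^2} + \left(\frac{b}{s^3} + \frac{2\|\boldsymbol{x}\|}{5}\right)^2.
\]
I would conclude via a two-regime split on $\|\boldsymbol{x}\|$. When $\|\boldsymbol{x}\|\ge R$, the regularizer $\tfrac{2}{5}\|\boldsymbol{x}\|\ge \tfrac{2R}{5}$ dominates the Cauchy--Schwarz bound $|b|/s^3\le\|\boldsymbol{v}\|/s^3\le 46\sqrt{r+1}/s^3$ (using Lemma~\ref{lmm:g}), and the second summand alone is much larger than $0.08$. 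When $\|\boldsymbol{x}\|<R$, we have $s<\sqrt 2$, and I would minimize the displayed right-hand side in $b\in[-\|\boldsymbol{v}\|,\|\boldsymbol{v}\|]$ subject to $\|\boldsymbol{v}\|\ge 1$ by explicit elementary estimates. The main obstacle is precisely this last minimization: along the axis $b/s^3 = -2\|\boldsymbol{x}\|/5$ the parallel summand vanishes, so one must use $\|\boldsymbol{v}\|^2\ge 1$ together with the boundedness of $s$ to force the perpendicular summand $(\|\boldsymbol{v}\|^2-b^2)/s^2$ to stay above $0.08^2$; the generous slack between the baseline $\|\boldsymbol{v}\|\ge 1$ and the target constant $0.08$ leaves ample room for a routine case split on the sign and magnitude of $b$.
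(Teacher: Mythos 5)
Your first stage is correct and is in fact a clean repackaging of the paper's argument for $g_\mathcal{P}$: writing $\partial_{X^i}g_\mathcal{P}=a_{i-1}-a_i$, observing the alternating signs $a_j=(-1)^{j+1}S_j$ with $S_j\ge 0$, and invoking $\Psi(x)\Phi'(y)>1$ at the first index $i^\star$ with $|y_{i^\star}|<1$ correctly yields $\|\nabla g_\mathcal{P}(\rho(\boldsymbol{x}))\|\ge 1$ (the paper instead runs three explicit cases to extract a directional derivative $\ge 1/\sqrt{2}$ along a specific unit vector $\boldsymbol{u}$ supported on $P_{i^\star+1}\cup P_{i^\star}$).

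The second stage, however, has a fatal gap. Your exact decomposition
\[
\|\nabla f_\mathcal{P}(\boldsymbol{x})\|^2=\frac{\|\boldsymbol{v}\|^2-b^2}{s^2}+\Bigl(\frac{b}{s^3}+\frac{2\|\boldsymbol{x}\|}{5}\Bigr)^2
\]
is right, but the inequality you need in the regime $\|\boldsymbol{x}\|<R$ is false if the only hypothesis retained is $\|\boldsymbol{v}\|\ge 1$. As a function of $b$ the right-hand side has leading coefficient $(1-s^4)/s^6\le 0$, so it is concave in $b$ and minimized at $b=-\|\boldsymbol{v}\|$, where it equals $\bigl(\tfrac{2\|\boldsymbol{x}\|}{5}-\tfrac{\|\boldsymbol{v}\|}{s^3}\bigr)^2$; this vanishes when $\|\boldsymbol{x}\|=\tfrac{5\|\boldsymbol{v}\|}{2s^3}$ (e.g.\ $\|\boldsymbol{v}\|=1$, $\|\boldsymbol{x}\|\approx 5/2$, $s\approx 1$), a configuration well inside $\|\boldsymbol{x}\|<R$. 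In other words, the affine map $\boldsymbol{w}\mapsto\nabla\rho(\boldsymbol{x})\boldsymbol{w}+\tfrac{2}{5}\boldsymbol{x}$ annihilates vectors of norm $\ge 1$ that are antiparallel to $\boldsymbol{x}$, so no case split on $b$ can close the argument from $\|\boldsymbol{v}\|\ge 1$ alone. What is missing is exactly the directional information the paper's proof carries through: it produces a unit vector $\boldsymbol{u}$ with $|\langle\boldsymbol{u},\nabla g_\mathcal{P}(\boldsymbol{y})\rangle|\ge 1/\sqrt{2}$ \emph{and} $|\langle\boldsymbol{u},\boldsymbol{y}\rangle|<1$, the latter bound (available because $|y_{i^\star}|<1$ is precisely an inner product of $\boldsymbol{y}$ with a vector supported on $P_{i^\star+1}\cup P_{i^\star}$) being what prevents the regularizer term $\tfrac{2}{5}\langle\boldsymbol{u},\boldsymbol{x}\rangle$ and the rank-one correction in $\nabla\rho$ from cancelling the gradient of $g_\mathcal{P}$. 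Your stage 1 actually produces such a $\boldsymbol{u}$ implicitly (the normalized indicator difference of $P_{i^\star+1}$ and $P_{i^\star}$), so the fix is to keep that vector and bound $\langle\boldsymbol{u},\nabla f_\mathcal{P}(\boldsymbol{x})\rangle$ directly rather than passing to the norm $\|\boldsymbol{v}\|$.
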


\begin{proof}
We let $\boldsymbol{y} = \rho(\boldsymbol{x})$ and take $j\leq r+1$ to be the smallest $j$ for which $|X^{j}(\boldsymbol{y})- X^{j-1}(\boldsymbol{y})|<1$, so that $|X^{j-1}(\boldsymbol{y})- X^{j-2}(\boldsymbol{y})|\geq 1$, where we let $X^{0}\equiv 0$ and $X^{-1}\equiv 1$. 
We first show a slight stronger argument for the gradient of $g_\mathcal{P}$. Specifically, we will show there exits a direction vector $\boldsymbol{u}\in \mathbb{R}^d$ with $\left\|\boldsymbol{u}\right\|=1$ satisfies
  \[\left|\langle \boldsymbol{u},\boldsymbol{y}\rangle\right|<1\quad\mbox{and}\quad \left|\langle \boldsymbol{u},\nabla g_\mathcal{P}(\boldsymbol{y})\rangle\right|>\frac{1}{\sqrt{2}}.\]
To show it, we consider the $j=1$, $1<j<r+1$ and $j=r+1$ separately.
\paragraph{Case (i): $j=1$.} For $l \in P_1$, we have
\begin{align*}
&\sqrt{d_0}\partial_l g_\mathcal{P}(\boldsymbol{y})\\~=~&  -\Psi(1)\Phi'(X^1 ) 
- \Psi'(-X^1)\Phi(X^2-X^1) - \Psi(-X^1)\Phi'(X^2-X^1)\\
&-\Psi'(X^1)\Phi(X^1-X^2) - \Psi(X^1)\Phi'(X^1-X^2) \\
&  - \Psi'(X^1- X^2)\Phi(X^3 - X^2) 
- \Psi'(-X^1+ X^2)\Phi(X^2 - X^3)  \\
~\leq~&-\Psi(1)\Phi'(X^1).
\end{align*}
Now we choose the direction vector $\boldsymbol{u}$ as  $\boldsymbol{u}_l = \begin{cases}
\frac{1}{\sqrt{d_0}} & l\in P_1,\\
0 &\mbox{otherwise},
\end{cases}$ and combining the fact that for all $x \ge 1$ and $|y| < 1$,
    $\Psi(x)\Phi'(y) > 1$, we have $\left|\langle \boldsymbol{u},\boldsymbol{y}\rangle\right| = |X^1(\boldsymbol{y})|<1$ and 
\[\left|\langle \boldsymbol{u},\nabla g_\mathcal{P}(\boldsymbol{y})\rangle\right| = \left\|\frac{1}{\sqrt{d_0}}\sum\limits_{l\in P_1}\partial_l g_\mathcal{P}(\boldsymbol{y})\right\|\geq 1.\]
\paragraph{Case (ii): $1<j<r+1$.} For $l \in P_j$, we have
\begin{align*}
&\sqrt{d_0}\partial_l g_\mathcal{P}(\boldsymbol{y})\\
~=~&  -(-1)^{j+1}\left[\Psi'(X^{j}- X^{j+1})\Phi(X^{j+2}- X^{j+1}) 
 + \Psi'(X^{j+1}- X^{j})\Phi(X^{j+1} - X^{j+2}) \right]
 \\
 &-(-1)^{j} \left[- \Psi'(X^{j-1}- X^j)\Phi(X^{j+1}-X^{j})- \Psi(X^{j-1}- X^j)\Phi'(X^{j+1}-X^{j})\right.
 \\&
 \left.-\Psi'(X^j - X^{j-1})\Phi(X^j-X^{j+1}) -\Psi(X^j - X^{j-1})\Phi'(X^j-X^{j+1})\right] \\&
 -(-1)^{j-1}\left[\Psi(X^{j-2}- X^{j-1})\Phi'(X^{j}- X^{j-1}) + \Psi(X^{j-1}- X^{j-2})\Phi'(X^{j-1}-X^{j})\right]  \\
 ~=~& (-1)^{j} \left[\Psi'(X^{j}- X^{j+1})\Phi(X^{j+2}- X^{j+1}) 
 + \Psi'(X^{j+1}- X^{j})\Phi(X^{j+1} - X^{j+2}) \right.
 \\
 & + \Psi'(X^{j-1}- X^j)\Phi(X^{j+1}-X^{j})+ \Psi(X^{j-1}- X^j)\Phi'(X^{j+1}-X^{j})
 \\&
+\Psi'(X^j - X^{j-1})\Phi(X^j-X^{j+1}) +\Psi(X^j - X^{j-1})\Phi'(X^j-X^{j+1}) \\&
+\left.\Psi(X^{j-2}- X^{j-1})\Phi'(X^{j}- X^{j-1}) + \Psi(X^{j-1}- X^{j-2})\Phi'(X^{j-1}-X^{j})\right].
\end{align*}
In this case, combining the fact that $\Psi(x)= 0,~\forall x<1/2$ and  the fact that for all $x \ge 1$ and $|y| < 1$,
    $\Psi(x)\Phi'(y) > 1$,  we have $\mathrm{sign}(\partial_l g_\mathcal{P}(\boldsymbol{y})) = (-1)^j$ and 
\begin{align*}
\sqrt{d_0}|\partial_l g_\mathcal{P}(\boldsymbol{y})|~ \geq ~&|\Psi(X^{j-2}- X^{j-1})\Phi'(X^{j}- X^{j-1}) + \Psi(X^{j-1}- X^{j-2})\Phi'(X^{j-1}-X^{j})| \\
~ = ~&|\Psi(|X^{j-2}- X^{j-1}|)\Phi'((X^{j}- X^{j-1})\cdot \mathrm{sign}(X^{j-2}- X^{j-1}) )| \\
~ \geq  ~&1.
\end{align*}
Furthermore, for $l\in P_{j-1}$, we have $\mathrm{sign}(\partial_l g_\mathcal{P}(\boldsymbol{y})) = (-1)^{j-1}$. Thus if we choose the direction vector $\boldsymbol{u}$ as  $\boldsymbol{u}_l = \begin{cases}
\frac{1}{\sqrt{2d_0}} & l\in P_j,\\
-\frac{1}{\sqrt{2d_0}} & l\in P_{j-1},\\
0 &\mbox{otherwise},
\end{cases}$ we have $\left|\langle \boldsymbol{u},\boldsymbol{y}\rangle\right| = \frac{1}{\sqrt{2}}|X^j(\boldsymbol{y}) - X^{j-1}(\boldsymbol{y})|<1$ and 
\[\left|\langle \boldsymbol{u},\nabla g_\mathcal{P}(\boldsymbol{y})\rangle\right| \geq \left\|\frac{1}{\sqrt{2d_0}}\sum\limits_{l\in P_j}\partial_l g_\mathcal{P}(\boldsymbol{y})\right\|\geq \frac{1}{\sqrt{2}} .\]
\paragraph{Case (iii): $j=r+1$.} For $l \in P_{r+1}$, we have 
\begin{align*}
\sqrt{d_0}\partial_l g_\mathcal{P}(\boldsymbol{y})
~=~&  -(-1)^{r}\left[\Psi(X^{r-1}- X^{r})\Phi'(X^{r+1}- X^{r}) 
 + \Psi(X^{r}- X^{r-1})\Phi'(X^{r} - X^{r+1}) \right].
\end{align*}
Similarly, we have $\mathrm{sign}(\partial_l g_\mathcal{P}(\boldsymbol{y})) = (-1)^{r+1}$ and 
\[\sqrt{d_0}|\partial_l g_\mathcal{P}(\boldsymbol{y})| = |\Psi(X^{r-1}- X^{r})\Phi'(X^{r+1}- X^{r}) 
 + \Psi(X^{r}- X^{r-1})\Phi'(X^{r} - X^{r+1})|\geq 1.\]
Furthermore, for $l\in P_{r}$, we have $\mathrm{sign}(\partial_l g_\mathcal{P}(\boldsymbol{y})) = (-1)^{r}$. Thus if we choose the direction vector $\boldsymbol{u}$ as  $\boldsymbol{u}_l = \begin{cases}
\frac{1}{\sqrt{2d_0}} & l\in P_{r+1},\\
-\frac{1}{\sqrt{2d_0}} & l\in P_{r},\\
0 &\mbox{otherwise},
\end{cases}$ we have $\left|\langle \boldsymbol{u},\boldsymbol{y}\rangle\right| = \frac{1}{\sqrt{2}}|X^{r+1}(\boldsymbol{y}) - X^{r}(\boldsymbol{y})|<1$ and 
\[\left|\langle \boldsymbol{u},\nabla g_\mathcal{P}(\boldsymbol{y})\rangle\right| \geq \left\|\frac{1}{\sqrt{2d_0}}\sum\limits_{l\in P_{r+1}}\partial_l g_\mathcal{P}(\boldsymbol{y})\right\|\geq \frac{1}{\sqrt{2}} .\]

Next, we present the following lemma to address the unbounded domain, as discussed in Section 5.2 of \citep{carmon2020lower}, and provide its proof here for the reader’s convenience.
\begin{lemma}
Assume functions $g:\mathbb{R}^d \to \mathbb{R}$ with $\left\|\nabla g(\boldsymbol{y})\right\|\leq 46\sqrt{r+1}$ for any $\boldsymbol{y}\in \mathbb{R}^d$ and $\rho:\mathbb{R}^d \to \mathbb{R}^d$ defined as $\rho(\boldsymbol{x}) = \frac{\boldsymbol{x}}{\sqrt{1+\norm{\boldsymbol{x}}^2/R^2}} 
  ~\mbox{and}~R=230\sqrt{r+1}$. 
  For any point $\boldsymbol{x}\in \mathbb{R}^d $  such that $\rho(\boldsymbol{x}) = \boldsymbol{y}$  if there exits a direction vector $\boldsymbol{u}\in \mathbb{R}^d$ with $\left\|\boldsymbol{u}\right\|=1$ satisfies
  \[\left|\langle \boldsymbol{u},\boldsymbol{y}\rangle\right|<1\quad\mbox{and}\quad \left|\langle \boldsymbol{u},\nabla g(\boldsymbol{y})\rangle\right|\geq \frac{1}{\sqrt{2}},\]
  then 
  \[\left\|\nabla\big(g \circ \rho + \frac{1}{5}\left\|\cdot\right\|^2\big)(\boldsymbol{x}) \right\|\geq 0.08.\]
\end{lemma}

\begin{proof}
By $\frac{\partial \rho}{\partial \boldsymbol{x}}(\boldsymbol{x}) = \frac{I - \rho(\boldsymbol{x})
    \rho(\boldsymbol{x})^{\top}/R^2}{\sqrt{1+\norm{\boldsymbol{x}}^2/R^2}}$, we can calculate $\langle \boldsymbol{u},\nabla (g \circ \rho + \frac{1}{5}\left\|\cdot\right\|^2)(\boldsymbol{x})\rangle$ as
\begin{align*}
&\bigg\langle \boldsymbol{u},\nabla \big(g \circ \rho + \frac{1}{5}\left\|\cdot\right\|^2\big)(\boldsymbol{x})\bigg\rangle\\ ~=~&\bigg\langle \boldsymbol{u},\frac{\partial \rho}{\partial \boldsymbol{x}}(\boldsymbol{x})\nabla g(\rho(\boldsymbol{x})) \bigg\rangle + \frac{2}{5}\langle \boldsymbol{u},\boldsymbol{x}\rangle\\
~=~&\frac{\langle \boldsymbol{u}, \nabla g(\rho(\boldsymbol{x}))\rangle - \langle \boldsymbol{u}, \rho(\boldsymbol{x})\rangle \langle \rho(\boldsymbol{x}), \nabla g(\rho(\boldsymbol{x}))\rangle /R^2 }{\sqrt{1+\norm{\boldsymbol{x}}^2/R^2}} + \frac{2}{5}\langle \boldsymbol{u},\rho(\boldsymbol{x})\rangle \sqrt{1+\norm{\boldsymbol{x}}^2/R^2}.
\end{align*}
\paragraph{Case (i): $\left\|\boldsymbol{x}\right\|\leq R/2$.} In this case, since $\left\|\rho(\boldsymbol{x})\right\|\leq \left\|\boldsymbol{x}\right\|\leq R/2$, we have
\begin{align*}
&\left\|\nabla\big(g \circ \rho + \frac{1}{5}\left\|\cdot\right\|^2\big)(\boldsymbol{x}) \right\|\\
~\geq~&\left|\bigg\langle \boldsymbol{u},\nabla \big(g \circ \rho + \frac{1}{5}\left\|\cdot\right\|^2\big)(\boldsymbol{x})\bigg\rangle\right|\\
~\geq~& \frac{2}{\sqrt{5}}\left|\bigg\langle \boldsymbol{u},\nabla g(\rho(\boldsymbol{x}))\bigg\rangle\right| - \left|\langle \boldsymbol{u}, \rho(\boldsymbol{x})\rangle \right|\left(\frac{\left\|\nabla g(\rho(\boldsymbol{x}))\right\|}{2R}+\frac{1}{\sqrt{5}}\right) \\ 
~\geq~& \frac{2}{\sqrt{5}}\left|\bigg\langle \boldsymbol{u},\nabla g(\boldsymbol{y})\bigg\rangle\right| - \left|\langle \boldsymbol{u}, \boldsymbol{y}\rangle \right|\left(\frac{R/5}{2R}+\frac{1}{\sqrt{5}}\right) \\ 
~\geq~& \frac{\sqrt{2}}{\sqrt{5}}-\frac{1}{10}-\frac{1}{\sqrt{5}}\geq 0.08.
\end{align*}
\paragraph{Case (ii): $\left\|\boldsymbol{x}\right\|> R/2$.} In this case, we have $\left\|\frac{\partial \rho}{\partial \boldsymbol{x}}(\boldsymbol{x})\right\|_{\mathrm{op}}\leq \frac{1}{\sqrt{1+\norm{\boldsymbol{x}}^2/R^2}}\leq \frac{2}{\sqrt{5}}$ and that $\left\|\nabla g(\rho(\boldsymbol{x}))\right\|\leq R/5$. Thus we have 
\[\left\|\nabla\big(g \circ \rho + \frac{1}{5}\left\|\cdot\right\|^2\big)(\boldsymbol{x}) \right\|\geq \frac{2}{5}\left\|\boldsymbol{x}\right\| - \left\|\frac{\partial \rho}{\partial \boldsymbol{x}}(\boldsymbol{x})\right\|_{\mathrm{op}}\left\|\nabla g(\rho(\boldsymbol{x}))\right\|\geq \frac{R}{5}-\frac{2}{\sqrt{5}}\frac{R}{5}>0.02R \geq \sqrt{r+1}\geq 1.\]
\end{proof}

\end{proof}

\section{Missing details for constant dimensional lower bound}


\label{app:reduction}
We first discuss how to reduce the problem of finding $\epsilon$-stationary points to finding local minimum of a monotone path function defined over the grid, following \citep{bubeck2020trap}.



Specifically, we consider grid graph $G_{n,d} = (V_{n,d}, E_{n,d})$ as $V_{n,d} = \{0,\ldots,n\}^d$ and $E_{n,d} = \{(\boldsymbol{u},\boldsymbol{v})\in V_{n,d}\times V_{n,d}: \left\| \boldsymbol{u}-\boldsymbol{v}\right\|_1\leq 1 \}$.
A sequence of vertices $(\boldsymbol{v}^0, \ldots, \boldsymbol{v}^n)$ is called a \emph{monotone path} in $G_{n,d}$ if $\boldsymbol{v}^0 = \boldsymbol{0}$ and for every $1 \leq i \leq n$, $\boldsymbol{v}^i - \boldsymbol{v}^{i-1}$ equals $\boldsymbol{e}_j$ for some $j \in [d]$.
In other words, the path starts at the origin and progresses by incrementing exactly one coordinate at each step.
If $(\boldsymbol{v}^0, \ldots, \boldsymbol{v}^n)$ is a monotone path, we associate to it a \emph{monotone path function} $P: V_{n,d} \to \mathbb{R}$ defined as
\[
P(\boldsymbol{v}) =
\begin{cases} 
-\|\boldsymbol{v}\|_1 & \text{if } \boldsymbol{v} \in \{\boldsymbol{v}^0, \ldots, \boldsymbol{v}^n\}, \\
\|\boldsymbol{v}\|_1 & \text{otherwise}.
\end{cases}
\]



For convenience, we may sometimes refer to the path function $P$ and the path $(\boldsymbol{v}_0, \ldots, \boldsymbol{v}_n)$ interchangeably. For $i = 0, \ldots, n$, we write $P_{i,d}$ for the set $P^{-1}(-i)$.
We denote the set of all monotone path functions on $G_{n,d}$ by $\mathrm{F}_{n,d}$. It is clear that if $P \in \mathrm{F}_{n,d}$ then $P_{n,d}$ is the only local minimum of $P$ and hence the global minimum. 
%
For any $\boldsymbol{v}\in V_{n,d}$, we define
\[
\mathrm{square}(\boldsymbol{v}) = \bigotimes\limits_{s=1}^d \left[\frac{\boldsymbol{v}_s}{n + 1},\frac{\boldsymbol{v}_s+1}{n + 1}\right] \subset [0,1]^d.
\]

The following lemma allows to construct a smooth function that preserves the structure of any monotone path function.

%
\begin{lemma}[\textbf{\cite[Section 3]{vavasis1993black}}]
\label{lemma:vav_const}
Fix $\varepsilon>0$ and let $n(\varepsilon)$ depends on $\varepsilon$. For any $P \in \mathrm{F}_{n(\varepsilon),d}$, there exists a function $\widehat{P}:[0,1]^d\to\R$ with the following properties:
	\begin{enumerate}
		\item $\widehat{P}$ is smooth.
		\item $\widehat{P} = f_P + \ell$, where $\ell$ is a linear function, which does not depend on $P$, and 
		$\mathrm{supp}(f_P)\subset \bigcup\limits_{i=0}^n\mathrm{square}\left(P_i\right),$
        where $\mathrm{supp}(\phi) = \{\boldsymbol{x}: \phi(\boldsymbol{x})\neq 0\}$.
		\item If $\boldsymbol{x} \in [0,1]^d$ is an $\varepsilon$-stationary point of $\widehat{P}$ then $\boldsymbol{x} \in \mathrm{square}\left(P_{n(\varepsilon),d}\right)$.
		\item if $P' \in \mathrm{F}_{n(\varepsilon),d}$ is another function and for some $i = 0,...,n$, $(P'_{i-1,d},P'_{i,d},P'_{i+1,d}) = (P_{i-1,d},P_{i,d},P_{i+1,d})$. Then
$\widehat{P}'\vert_{\mathrm{square}\left(P_{i,d}\right)} = \widehat{P}\vert_{\mathrm{square}\left(P_{i,d}\right)}.$
\item Assume the length of path is $k$ then $n(\varepsilon)$ satisfies 
$-\varepsilon\cdot k/n(\varepsilon) +1/n(\varepsilon)^2\geq 0.$
	\end{enumerate}
\end{lemma}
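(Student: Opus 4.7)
The plan is to follow the Vavasis-style construction from \citep{vavasis1993black}. I take the linear piece $\ell(\boldsymbol{x}) = -\alpha \langle \mathbf{1}, \boldsymbol{x}\rangle$ with $\alpha$ chosen so that $\|\nabla\ell\| > \varepsilon$; this immediately yields Property~2 since $\ell$ does not depend on $P$. Write $f_P = \sum_{i=0}^{n} \phi_i$, where each $\phi_i$ is smooth, supported inside $\mathrm{square}(P_{i,d})$, and vanishes to all orders on the boundary $\partial\mathrm{square}(P_{i,d})$. This support condition gives $\mathrm{supp}(f_P) \subset \bigcup_i \mathrm{square}(P_i)$, and because each $\phi_i$ vanishes together with all derivatives on its boundary, $\widehat{P} = \ell + f_P$ is globally $C^\infty$ (Property~1). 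For any $\boldsymbol{x}$ lying outside the path tube we have $\nabla \widehat{P}(\boldsymbol{x}) = \nabla\ell$, whose norm already exceeds $\varepsilon$; hence candidate $\varepsilon$-stationary points can only occur inside the tube, and it suffices to arrange that the only such point sits in $\mathrm{square}(P_{n,d})$ in order to obtain Property~3.

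For an intermediate index $0 \le i < n$, I design $\phi_i$ so that, along a thin interior ``tube'' passing from the shared face $\partial\mathrm{square}(P_{i-1,d})\cap \partial\mathrm{square}(P_{i,d})$ to the shared face $\partial\mathrm{square}(P_{i,d})\cap \partial\mathrm{square}(P_{i+1,d})$, the gradient $\nabla(\ell+\phi_i)$ is nearly tangent to the path direction while its norm remains above $\varepsilon$; outside this tube, $\phi_i$ is tuned so that $\nabla(\ell+\phi_i)$ still has norm above $\varepsilon$. The entire construction of $\phi_i$ uses only the relative positions of the triple $(P_{i-1,d}, P_{i,d}, P_{i+1,d})$ and the fixed data of $\ell$, which makes Property~4 automatic: two path functions sharing this triple produce identical bumps on $\mathrm{square}(P_{i,d})$. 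For the terminal square $\mathrm{square}(P_{n,d})$ I choose $\phi_n$ to cancel $\nabla\ell$ on an interior ball, producing the unique interior critical point, which then serves as the sole $\varepsilon$-stationary point.

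The main obstacle is calibrating the amplitude of the $\phi_i$ against the drop of $\ell$ over the full length of the path. Since each bump is supported in a cube of side $1/(n(\varepsilon)+1)$, its amplitude must be at most of order $1/n(\varepsilon)^2$ in order to keep $\|\nabla\phi_i\|$ bounded (otherwise a large bump gradient would itself create spurious $\varepsilon$-stationary points). On the other hand, for the gradient of $\ell + \phi_i$ to remain consistent across the $k$ traversed squares, the accumulated variation of $\ell$ along the path, $\|\nabla\ell\| \cdot k / n(\varepsilon)$, must be dominated by the per-square amplitude $1/n(\varepsilon)^2$. This is exactly the inequality $-\varepsilon k / n(\varepsilon) + 1/n(\varepsilon)^2 \ge 0$ of Property~5. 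Verifying that a single bump can indeed redirect the gradient flow smoothly through a unit cube while respecting this amplitude bound is the technically delicate step; it is ultimately a partition-of-unity exercise, combining a smooth cutoff localized in the path tube with a primitive of the intended tangential gradient field.
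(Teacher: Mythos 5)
The paper does not prove this lemma: it is imported from Section~3 of \citep{vavasis1993black} (see also Section~6 of \citep{bubeck2020trap}), so your sketch has to be judged against that construction. Your overall architecture --- a fixed linear part $\ell$ plus bumps supported in the path squares, locality of each bump in the triple $(P_{i-1,d},P_{i,d},P_{i+1,d})$ for Property~4, a terminal dip producing the unique stationary point, and the calibration $1/n(\varepsilon)^2\ge \varepsilon k/n(\varepsilon)$ for Property~5 --- is the right shape. But two gaps remain.

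First, your argument for Property~3 uses the unconstrained gradient: ``outside the tube $\nabla\widehat P=\nabla\ell$ has norm $>\varepsilon$.'' On $[0,1]^d$ the relevant notion is the projected gradient ($\varepsilon$-KKT point, as the paper's preliminaries and Theorem~\ref{the:main2} require), and your choice $\ell(\boldsymbol{x})=-\alpha\langle\mathbf{1},\boldsymbol{x}\rangle$ has an \emph{exact} KKT point at the corner $(1,\dots,1)$, where every descent direction exits the cube. So a spurious $\varepsilon$-stationary point survives unless the linear term drives the flow toward the corner at which the path \emph{starts} (this is why monotone paths are anchored at the origin) and the channel catches the flow there and carries it to $\mathrm{square}(P_{n,d})$. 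Second, requiring each $\phi_i$ to vanish to all orders on the \emph{entire} boundary of its square forces $\widehat P=\ell$ on the face shared by consecutive path squares, so the bumps cannot encode a globally descending channel; a dip of the amplitude you need, $\Theta(1/n^2)$, inside a cube of side $1/n$ then generically produces an interior point where $\nabla\phi_i=-\nabla\ell$, i.e., a spurious critical point. In the actual construction $f_P$ does \emph{not} vanish on the entry/exit faces: the channel value drops by $\Theta(1/n^2)$ across each traversed square, and Property~5 is exactly the statement that this drop dominates the total variation $\varepsilon k/n$ of $\ell$ along the path. Building these interior pieces without spurious (projected) stationary points is the entire content of the lemma, and your sketch defers it as ``a partition-of-unity exercise.'' A minor point: the amplitude ceiling $\lesssim 1/n^2$ comes from the Lipschitz-gradient (Hessian) constraint $\|\nabla^2\phi_i\|\le L$, not from keeping $\|\nabla\phi_i\|$ bounded as you state; a bounded gradient would only force amplitude $\lesssim 1/n$.
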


In particular, because $k \leq dn(\varepsilon)$, then $n(\varepsilon) \leq 1/\sqrt{d\varepsilon}$.  We now restate the result in Section 6 of \citep{bubeck2020trap} that finding the minimum of $P$ is as hard as finding an $\varepsilon$-stationary point of $\widehat{P}$. 
To this end, we give formal definition of problem of finding a local minimum of a function defined on a graph.


%

%
%

\paragraph{Finding local minimum problem.} Given a graph $G = (V, E)$ and oracle access to a function $f : V \to \R$,  the goal is to find a vertex $v$ that is a local minimum, i.e., $f(v) \leq f(u)$ for all $(u, v) \in E$. To compare the difficulty of different learning problems, we leverage the \emph{round-preserving} reduction introduced in \citep{branzei2022query}.

\begin{definition}[\textbf{Round-preserving reduction}]
A reduction from oracle-based problem $\mathrm{P}_1$ to
oracle-based problem $\mathrm{P}_2$ is round-preserving if for any instance of problem $\mathrm{P}_1$ with oracle $\mathcal{O}_1$,
the instance of problem $\mathrm{P}_2$ with oracle $\mathcal{O}_2$ given by the reduction satisfies that
\begin{enumerate}
    \item  A solution of the $\mathrm{P}_1$ instance can be obtained from any solution of the $\mathrm{P}_2$ instance without
any more queries on $\mathcal{O}_1$.
\item Each query to $\mathcal{O}_2$ can be answered by a constant number of queries to $\mathcal{O}_1$ in one round.
\end{enumerate}
\end{definition}
The following lemma showed that finding the minimum of $P$ is as hard as finding an $\varepsilon$-stationary point of $\widehat{P}$.
\begin{lemma}[\textbf{Section 6 in \citep{bubeck2020trap}}]
\label{lemma:reduction_monotone}
There is a round-preserving reduction such that any instance of finding $\varepsilon$-stationary point of $\widehat{P}$ is reduced to the instance of finding local minimum of a monotone path function $P$. Specifically, for any algorithm, the complexity of finding $\varepsilon$-stationary point of $\widehat{P}$ is no smaller than $1/(2d+1)$ times the complexity of finding local minimum of $P \in F_{n(\varepsilon), d}$. 
\end{lemma}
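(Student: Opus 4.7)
The plan is to construct an explicit round-preserving reduction that simulates the oracle $\mathcal{O}_2$ for $\widehat{P}$ by a constant factor more queries to $\mathcal{O}_1$ for $P$, and then recovers the local minimum of $P$ directly from any returned $\varepsilon$-stationary point without further queries. The bound $S \ge M/(2d+1)$ then follows by contrapositive: an $S$-query stationary-point algorithm yields a $(2d+1)S$-query local-min algorithm.

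First I would describe the oracle simulation. For each query $\boldsymbol{x} \in [0,1]^d$ issued to $\mathcal{O}_2$, compute the unique grid vertex $\boldsymbol{v}(\boldsymbol{x}) \in V_{n(\varepsilon),d}$ with $\boldsymbol{x} \in \mathrm{square}(\boldsymbol{v}(\boldsymbol{x}))$ by $\boldsymbol{v}(\boldsymbol{x})_s = \lfloor (n(\varepsilon)+1)\boldsymbol{x}_s \rfloor$. Issue $\mathcal{O}_1$-queries at $\boldsymbol{v}(\boldsymbol{x})$ and at each of its $2d$ grid neighbors in parallel. By item 2 of Lemma~\ref{lemma:vav_const}, $\widehat{P} = f_P + \ell$ with $\mathrm{supp}(f_P) \subset \bigcup_i \mathrm{square}(P_{i,d})$, so whenever $P(\boldsymbol{v}(\boldsymbol{x})) > 0$ (i.e., $\boldsymbol{v}(\boldsymbol{x})$ is off-path), we have $\widehat{P}(\boldsymbol{x}) = \ell(\boldsymbol{x})$, which is computable from $\boldsymbol{x}$ alone since $\ell$ does not depend on $P$. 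Otherwise $\boldsymbol{v}(\boldsymbol{x}) = P_{i,d}$ with $i = \|\boldsymbol{v}(\boldsymbol{x})\|_1$, and by monotonicity of the path the predecessor $P_{i-1,d}$ and successor $P_{i+1,d}$ are necessarily among the $2d$ neighbors already queried, recognizable as the unique neighbors with negative $P$-value; by item 4 of Lemma~\ref{lemma:vav_const}, the restriction $\widehat{P}\vert_{\mathrm{square}(P_{i,d})}$ is determined by the triple $(P_{i-1,d},P_{i,d},P_{i+1,d})$, so $\widehat{P}(\boldsymbol{x})$ can be evaluated exactly.

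Next I would extract the answer. If the stationary-point algorithm returns $\boldsymbol{x}^\star$ with $\|\nabla \widehat{P}(\boldsymbol{x}^\star)\| \leq \varepsilon$, then item 3 of Lemma~\ref{lemma:vav_const} forces $\boldsymbol{x}^\star \in \mathrm{square}(P_{n(\varepsilon),d})$, so $\boldsymbol{v}(\boldsymbol{x}^\star) = P_{n(\varepsilon),d}$ is the (unique) local minimum of $P$, and this extraction requires no further $\mathcal{O}_1$-queries. To verify the round-preserving property, note that every batch of $\mathcal{O}_2$-queries issued by the stationary-point algorithm in one round is replaced by the union of their $(2d+1)$-fold expansions, which remain in a single round and expand the query count per round by at most $2d+1$; the two conditions in the definition of round-preserving reduction are then satisfied.

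The main obstacle is guaranteeing that the $(2d+1)$-query local window around $\boldsymbol{v}(\boldsymbol{x})$ suffices to reconstruct $\widehat{P}(\boldsymbol{x})$ exactly in a single round, without requiring sequential probing of the path structure further away. This is the content of the combination of items 2 and 4 of Lemma~\ref{lemma:vav_const} together with path monotonicity: the support locality of $f_P$ kills contributions from non-adjacent path squares, while monotonicity confines the neighbouring path vertices to grid-neighbours of $\boldsymbol{v}(\boldsymbol{x})$; without either property the simulation would need unbounded adaptivity and the reduction would fail to be round-preserving.
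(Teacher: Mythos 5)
Your reduction is correct and is essentially the argument the paper relies on: the paper does not reprove this lemma but cites Section 6 of \citep{bubeck2020trap}, and that argument is exactly your simulation --- answer each continuous query by querying the containing grid cell and its $2d$ neighbours ($2d+1$ queries in one round), use the support locality (item 2) and local determination (item 4) of the Vavasis construction together with path monotonicity (which forces $P_{i-1,d},P_{i+1,d}$ to be the only on-path grid neighbours of $P_{i,d}$) to evaluate $\widehat{P}$ exactly, and use item 3 to read off the local minimum from the returned stationary point. The only point worth tightening is the measure-zero tie-breaking when a query or the output lies on a face shared by several squares, where your floor convention may select an adjacent cell; this is resolved by continuity of $f_P$ for the oracle simulation and by a consistent tie-breaking (or perturbation) for the extraction step.
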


\paragraph{Lower bound for monotone path functions.} By Lemma~\ref{lemma:reduction_monotone}, the remaining task is to find the number of required queries in each number for finding local minimum for monotone path functions, which is obtained by \citet{branzei2022query} for constant iteration case. 
Furthermore, we also find their construction also work for algorithm with $\Theta(\log(1/\varepsilon))$ iterations.  
We summarize this results in the following lemma.
\begin{lemma}[\textbf{Adaptive complexity of finding local minimum~\citep{branzei2022query}}]
\label{lemma:lower_bound_monotone_path}
Assume 
For any (possible randomized) algorithm running within $k$-round, and issuing $n^{\frac{d^{k+1}-d^k}{d^k-1}}/(20d\cdot k)$ queries per round fails to find local minimum for monotone path function with length at most $nd$ with probability at least $7/40$.
\end{lemma}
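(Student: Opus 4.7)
\textbf{Proof plan for Lemma~\ref{lemma:lower_bound_monotone_path}.} By Yao's minimax principle, it suffices to construct a distribution over monotone path functions on $G_{n,d}$ on which every \emph{deterministic} $k$-round algorithm issuing $Q := n^{(d^{k+1}-d^k)/(d^k-1)}/(20dk)$ queries per round fails with probability at least $7/40$. The natural candidate is a \emph{random staircase}: split the step index set $\{1,\ldots,nd\}$ into $k+1$ consecutive blocks of lengths $L_0, L_1, \ldots, L_k$ with $L_t$ chosen so that the monotone sub-path forming block $t$ starts where block $t-1$ ended and performs $L_t$ independent uniform steps from $\{\boldsymbol{e}_1,\ldots,\boldsymbol{e}_d\}$. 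The lengths should grow geometrically, $L_t \asymp n^{(d^{t+1}-d^t)/(d^k-1)}$, so that $\sum_{t=0}^{k} L_t \le nd$ (they sum telescopically to at most $nd$ for large enough constants), and simultaneously $L_t^{d-1}/(d-1)! \gg Q \cdot k$ for every $t$.

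The core inductive claim I would prove is: for each $t \in \{0,\ldots,k-1\}$, with probability at least $1 - (t+1)/(8k)$ over the random path and algorithm tape, none of the queries issued in rounds $1,\ldots,t+1$ lies on blocks $t+1,\ldots,k$ of the path. Granting this claim at $t=k-1$, the local minimum $\boldsymbol{v}^{nd}$ is distributed, conditionally on the transcript, almost uniformly over the set of endpoints reachable from $\boldsymbol{v}^{L_0+\cdots+L_{k-1}}$ by $L_k$ monotone steps, a set of cardinality $\binom{L_k+d-1}{d-1} \gtrsim L_k^{d-1}/(d-1)!$; so the probability that the algorithm's output equals the true minimum is at most $Q/L_k^{d-1} + 1/8 \le 7/40$, giving failure probability at least $7/40$. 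The inductive step itself conditions on the round-$t+1$ transcript being "clean" (no prior hit on future blocks) and observes that conditioned on such a transcript the future blocks $t+1,\ldots,k$ remain uniformly random monotone sub-paths anchored at $\boldsymbol{v}^{L_0+\cdots+L_t}$, since path values only reveal whether a queried point is on the path, and a clean transcript is compatible with every such assignment. A single query $\boldsymbol{x}$ in round $t+2$ then hits block $s > t$ with probability at most $\binom{L_s + d - 1}{d-1}^{-1}\le (d-1)!/L_s^{d-1}$ (the chance a uniform monotone sub-path of length $L_s$ passes through a given lattice point). Union-bounding over the $Q$ queries and over $s = t+1,\ldots,k$ gives total hit probability $\le \sum_{s>t} Q(d-1)!/L_s^{d-1} \le 1/(8k)$ by the choice of $L_s$, closing the induction.

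\emph{Main obstacle and computation of $L_t$.} The delicate point is the conditioning: when I condition the future blocks on a "clean transcript", I must verify that this conditioning does not correlate the future blocks with the algorithm's round-$(t+2)$ queries in a way that biases hit probabilities. Because a deterministic algorithm's queries in round $t+2$ are a fixed function of the earlier transcript, and the earlier transcript (under the clean event) is a deterministic function of blocks $0,\ldots,t$ only, the round-$(t+2)$ queries are independent of blocks $t+1,\ldots,k$; this is what legitimizes the per-query hit probability bound. The exponent $(d^{k+1}-d^k)/(d^k-1)$ then falls out of the recurrence $Q \asymp L_{t+1}^{d-1}/k$ together with the length budget $\sum_{t=0}^{k} L_t = nd$: solving $L_{t+1}^{d-1} = c \cdot L_t$ (or equivalently fixing $L_t = L_0^{d^t}$ up to constants) and imposing $L_k \asymp n$, one finds $L_0 \asymp n^{(d-1)/(d^k-1)}$ and hence $L_k^{d-1} \asymp n^{(d-1)d^k/(d^k-1)} = n^{(d^{k+1}-d^k)/(d^k-1)}$, matching the stated $Q$ up to the $20dk$ factor. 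Finally, the same construction works for any $k$ as large as $\Theta(\log(1/\varepsilon))$: the only uses of $k = \Theta(1)$ in the analysis are (i) absorbing the $\sum_{t=0}^{k}L_t \le nd$ constraint into constants (straightforward since the sum is dominated by $L_k$), and (ii) the union bound term $1/(8k)$, which is already set up to scale with $k$.
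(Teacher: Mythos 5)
You should first note that the paper does not actually prove this lemma: it is imported directly from \citet{branzei2022query}, with only the remark that the random-staircase construction continues to work when $k=\Theta(\log(1/\varepsilon))$. So the relevant comparison is between your reconstruction and the cited construction. Your high-level skeleton (Yao's principle, a staircase split into blocks, an induction showing that with high probability no query issued through round $t+1$ hits blocks $t+1,\dots,k$, and a final counting argument for the endpoint) is the right one, and your observation that a clean transcript keeps the future blocks uniform and independent of the next round's queries is sound; that is indeed the part that extends beyond constant $k$.

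However, two of your quantitative steps are wrong, and they are load-bearing. First, the block lengths are infeasible: $L_k\asymp n^{(d^{k+1}-d^k)/(d^k-1)}=n^{(d-1)+(d-1)/(d^k-1)}>n$ for every $d\ge 2$, so the last block alone already exceeds the total length budget $nd$; the sum does not ``telescope to at most $nd$.'' If you instead impose $L_k\le n$, as you must, your final bound on the success probability becomes $Q/L_k^{d-1}\gtrsim n^{(d-1)/(d^k-1)}/(20dk)\to\infty$, so the two errors do not cancel. (Relatedly, the recurrence you write, $L_{t+1}^{d-1}=c\,L_t$, is inconsistent with $L_t=L_0^{d^t}$, which solves $L_{t+1}=L_t^{d}$.) Second, the per-query hit bound is false for the distribution you define: for a monotone path built from $L$ i.i.d.\ uniform steps, the probability of passing through a point at $\ell_1$-distance $m$ from the anchor is the multinomial probability $\binom{m}{a_1,\dots,a_d}d^{-m}$, which equals $1/d$ at distance one and is $\Theta(m^{-(d-1)/2})$ at the central point, not $O(L^{-(d-1)})$; likewise the endpoint is multinomially distributed, with maximum point mass $\Theta(L^{-(d-1)/2})$, rather than nearly uniform over the $\binom{L_k+d-1}{d-1}$ compositions. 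This is exactly why the construction of \citet{branzei2022query} is more delicate: each successive segment is hidden inside a sub-grid whose location is determined by the undiscovered tail of the previous segment, and the exponent $\frac{d^{k+1}-d^k}{d^k-1}$ arises from the recursion $\alpha_k=d-\alpha_k/\alpha_{k-1}$ on sub-grid side lengths, not from a naive $L^{-(d-1)}$ union bound over lattice points. As written, your proposal does not establish the stated query threshold.
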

Combining Lemma \ref{lemma:reduction_monotone} and Lemma \ref{lemma:lower_bound_monotone_path} yields the desired result in Theorem \ref{the:main2}. We note $n(\varepsilon) \leq 1/\sqrt{d\varepsilon}$ here.

\section{Constant iteration gradient flow trapping }
\label{app:constant_alg}
In this section, we will prove Theorem~\ref{the:main3}. We first restate several useful definitions and facts in Appendix~\ref{app:tools}, then we propose our algorithm for unconstrained case in Appendix~\ref{app:alg}. Finally we analyze the algorithm in Appendix~\ref{app:analysis} and extend the result to cube constrained case in Appendix~\ref{app:constrained}

\begin{theorem}
For $\varepsilon>0$, $d,k = \Theta(1)$, there is a deterministic algorithm finding $\varepsilon$-stationary points, running within $k$-round, with
\begin{itemize}
    \item $C(d,k,L,\Delta)\cdot \varepsilon^{-\frac{{d-1}}{\left(\frac{2d}{d+1}\right)^k-1}-{d-1}}$ queries per iteration when $f$ is unconstrained,
    \item $C(d,k,L)\cdot  \varepsilon^{-\frac{{d-1}}{2\left(\left(\frac{2d}{d+1}\right)^k-1\right)}-\frac{d-1}{2}}$ queries per iteration when $f$ is constrained on $[0,1]^d$.
\end{itemize}
\end{theorem}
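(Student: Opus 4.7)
The plan is to analyze the Gradient Flow Grid Trapping (GFGT) algorithm sketched in Section~\ref{sec:tech2}, which maintains a shrinking hyperrectangle $R_t$ and iterate $\boldsymbol{x}^t\in R_t$ satisfying the invariant that every point of $\partial R_t$ is $\varepsilon_t$-unreachable from $\boldsymbol{x}^t$. Under this invariant, the trap lemma (Lemma~\ref{lmm:trap}) forces $R_t$ to contain an $\varepsilon_t$-stationary point, so once $L\cdot\mathrm{diam}(R_k)=O(\varepsilon-\varepsilon_k)$, Lipschitzness of $\nabla f$ upgrades $\boldsymbol{x}^k$ itself to an $\varepsilon$-stationary point. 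For the constrained case I would take $R_0=[0,1]^d$ and $\varepsilon_0=\Theta(\varepsilon)$, so $D_0=\Theta(1)$; for the unconstrained case I would initialize $R_0$ to a cube of diameter $\Theta(\Delta/\varepsilon)$ about $\boldsymbol{x}^0$, justified by the mean-value argument along steepest descent that any $\varepsilon$-stationary point lies within distance $\Delta/\varepsilon$ of $\boldsymbol{x}^0$.

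At round $t$ the algorithm lays $\ell_t-1$ equispaced barrier hyperplanes per coordinate direction inside $R_t$ and queries a $\delta_t$-net on each resulting $(d-1)$-dimensional barrier slab; the iterate and rectangle are updated as in Figures~\ref{fig:1} and~\ref{fig:2}. The inductive maintenance of boundary unreachability splits into two cases. Points of $\partial R_{t+1}$ inherited from $\partial R_t$ are upgraded from $\varepsilon_t$- to $\varepsilon_{t+1}$-unreachability by the triangle inequality applied to $(\boldsymbol{x}^t,\boldsymbol{x}^{t+1})$ using that $f(\boldsymbol{x}^{t+1})\leq f(\boldsymbol{x}^t)$: the increment $\varepsilon_{t+1}-\varepsilon_t=\Theta(\varepsilon/k)$ absorbs the deficit $\varepsilon_t\|\boldsymbol{x}^t-\boldsymbol{x}^{t+1}\|/\|\boldsymbol{x}^{t+1}-\boldsymbol{y}\|$ because Observation~(b) in the sketch guarantees $\|\boldsymbol{x}^{t+1}-\boldsymbol{y}\|=\Theta(D_t/\ell_t)$. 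Points lying on new faces are approximated by their nearest net point $\boldsymbol{y}'$ at distance $\leq\delta_t$, and their unreachability follows by chaining $\varepsilon_t$-unreachability of $\boldsymbol{y}'$ from $\boldsymbol{x}^{t+1}$ (tautologically when the iterate is unchanged, and by the minimum-value selection rule otherwise) with a smoothness estimate $|f(\boldsymbol{y})-f(\boldsymbol{y}')|=O(\|\nabla f(\boldsymbol{y}')\|\delta_t+L\delta_t^2)$. The same distance-from-boundary observation allows the net spacing to scale as $\delta_t=\Theta(\sqrt{\varepsilon D_t/(L\ell_t)})$ rather than $\Theta(\varepsilon/L)$---this is the crucial Taylor-type saving that underlies the $(d-1)/2$-type exponent.

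With this choice, the per-round query count becomes $\Theta(d\,\ell_t\cdot(D_t/\delta_t)^{d-1})=\Theta(d\,\ell_t^{(d+1)/2}\cdot(LD_t/\varepsilon)^{(d-1)/2})$, while the diameter recursion is $D_{t+1}=\Theta(D_t/\ell_t)$ from the compression rule in Figure~\ref{fig:2}. Setting $\beta_t=\log(LD_t/\varepsilon)$ and balancing the per-round cost to a common exponent $\log q$ across all $k$ rounds yields the linear recursion $\beta_{t+1}=\frac{2d}{d+1}\beta_t-\frac{2}{d+1}\log q$, which when solved under the terminal condition $\beta_k=O(1)$ gives $\log q=\frac{d-1}{2}\cdot\frac{(2d/(d+1))^k}{(2d/(d+1))^k-1}\cdot\beta_0$. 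Substituting $\beta_0=\Theta(\log(1/\varepsilon))$ in the constrained case and $\beta_0=\Theta(\log(\Delta/\varepsilon^2))$ in the unconstrained case (the extra factor two on the log scale coming from the $\Delta/\varepsilon$ initial diameter) reproduces the two exponents stated in the theorem, modulo the $\mathrm{poly}(d,k,L,\Delta)$ prefactors absorbed into $C_1$ and $C_2$.

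The main obstacle is jointly engineering the compression rule in Figure~\ref{fig:2} and the parameter schedule $(\ell_t,\delta_t,\varepsilon_t)_{t=0}^{k-1}$ so that (i) the iterate $\boldsymbol{x}^{t+1}$ is simultaneously far from every new face of $R_{t+1}$ and every inherited face of $\partial R_t\cap\partial R_{t+1}$, even when it falls near a corner of the previous rectangle, and (ii) the accumulated accuracy slack $\sum_{t<k}(\varepsilon_{t+1}-\varepsilon_t)\leq\varepsilon/2$ still leaves enough budget for $\delta_t$ to remain of order $\sqrt{\varepsilon D_t/(L\ell_t)}$ at every round. The geometric part (i) is handled by the case analysis of Figure~\ref{fig:2}, and the accuracy part (ii) reduces to checking that the choice $\varepsilon_{t+1}-\varepsilon_t=\Theta(\varepsilon/k)$ is compatible with the Taylor bound above once $\|\nabla f(\boldsymbol{y}')\|$ is controlled via the unreachability of $\boldsymbol{y}'$ itself.
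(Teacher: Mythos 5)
Your proposal follows essentially the same route as the paper's Appendix D: the same GFGT algorithm with the invariant that $\partial R_t$ is $\varepsilon_t$-unreachable from $\boldsymbol{x}^t$, the same net spacing $\delta_t=\Theta(\sqrt{\varepsilon r^t/(L\ell_t)})$, the same per-round cost $\Theta(\ell_t^{(d+1)/2}(LD_t/\varepsilon)^{(d-1)/2})$, and your balancing recursion $\beta_{t+1}=\tfrac{2d}{d+1}\beta_t-\tfrac{2}{d+1}\log q$ is exactly the implicit derivation of the paper's explicit schedule $\ell_t$; your exponents check out in both the constrained and unconstrained cases.

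However, one step as you state it would fail quantitatively: for inherited boundary points $\boldsymbol{y}\in\partial R_{t+1}\cap\partial R_t$ you propose to use only $f(\boldsymbol{x}^{t+1})\le f(\boldsymbol{x}^t)$ and to absorb the resulting deficit $\varepsilon_t\|\boldsymbol{x}^t-\boldsymbol{x}^{t+1}\|/\|\boldsymbol{x}^{t+1}-\boldsymbol{y}\|$ into the increment $\varepsilon_{t+1}-\varepsilon_t$. Since $\|\boldsymbol{x}^t-\boldsymbol{x}^{t+1}\|$ can be of order $D_t$ while $\|\boldsymbol{x}^{t+1}-\boldsymbol{y}\|$ is only guaranteed to be $\Omega(D_t/\ell_t)$, that deficit is $\Theta(\varepsilon_t\ell_t)\gg\varepsilon/k$, so no affordable increment absorbs it. The correct mechanism, which is already available from your own selection rule, is that $\boldsymbol{x}^{t+1}\in S^\star$ satisfies the \emph{reachability} inequality $f(\boldsymbol{x}^{t+1})\le f(\boldsymbol{x}^t)-\varepsilon_t\|\boldsymbol{x}^t-\boldsymbol{x}^{t+1}\|$; combined with $\|\boldsymbol{x}^t-\boldsymbol{y}\|\le\|\boldsymbol{x}^t-\boldsymbol{x}^{t+1}\|+\|\boldsymbol{x}^{t+1}-\boldsymbol{y}\|$ this yields $\varepsilon_t$-unreachability of $\boldsymbol{y}$ from $\boldsymbol{x}^{t+1}$ with \emph{no} slack consumed, so the entire increment budget can be reserved for the net-to-face upgrade (the paper's Lemma~\ref{lem:increase-eps}), where it is genuinely needed. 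A secondary omission: in the constrained case the invariant can only be maintained on $\partial R_t\setminus\partial[0,1]^d$, so the terminal step must argue about the projected gradient (an $\varepsilon$-KKT point) and extract a corner of $R_k$, rather than directly applying Corollary~\ref{lmm:stat}.
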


\subsection{Useful tools}
\label{app:tools}
Following \cite{bubeck2020trap,hollender2023computational}, we begin with some definitions and technical lemmas.
\begin{definition}[\bf{$\varepsilon$-unreachable}]
For $\boldsymbol{x}, \boldsymbol{y} \in \R^d$ and $\varepsilon>0$ we say that $\boldsymbol{y}$ is $\varepsilon$-unreachable from $\boldsymbol{x}$ if the following holds:
\[f(\boldsymbol{y}) > f(\boldsymbol{x}) - \varepsilon\left\|\boldsymbol{x} - \boldsymbol{y}\right\|_2.\]
\end{definition}

\begin{definition}[\bf{Full-dimensional hyperrectangle}]
A $k$-dimensional hyperrectangle $R $ in $\R^d$
is a set of the form $R = [a_1, b_1]\times \cdots \times[a_d, b_d]$,
where $a_i \leq b_i$ for all $i \in [d]$, and $|\{i \in [d] : a_i < b_i\}| = k$. When $k = d$, we also say that $R$ is full-dimensional.
\end{definition}

\begin{lemma}[\bf{Lemma 3 in \citet{hollender2023computational}}]
\label{lmm:trap}
Let $R$ be a full-dimensional hyperrectangle in $\R^d$ and $\boldsymbol{x}$ a point in $R$. For any $\varepsilon > 0$, if
all $\boldsymbol{y} \in \partial R$ are $\varepsilon$-unreachable from $\boldsymbol{x}$, then $R$ contains an $\varepsilon$-stationary point of $f$.
\end{lemma}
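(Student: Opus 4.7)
The plan is to proceed by contradiction: suppose $R$ contains no $\varepsilon$-stationary point of $f$, so $\|\nabla f(\boldsymbol{z})\| > \varepsilon$ for every $\boldsymbol{z} \in R$. The central idea is to follow the normalized (steepest-descent) gradient flow out of $\boldsymbol{x}$ and track the decrease in $f$ against the arc length traversed. Specifically, I would consider the ODE $\dot{\boldsymbol{z}}(t) = -\nabla f(\boldsymbol{z}(t))/\|\nabla f(\boldsymbol{z}(t))\|$ with $\boldsymbol{z}(0) = \boldsymbol{x}$; the right-hand side is continuous and of unit norm on $R$ by the assumed lower bound on $\|\nabla f\|$, so standard ODE theory gives a well-defined unit-speed trajectory for as long as it remains in $R$.

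Next I would argue the flow must exit $R$ in finite time. Along the normalized flow, $\frac{d}{dt} f(\boldsymbol{z}(t)) = -\|\nabla f(\boldsymbol{z}(t))\| < -\varepsilon$ as long as $\boldsymbol{z}(t) \in R$. If the flow stayed inside $R$ for all $t \geq 0$, then $f(\boldsymbol{z}(t)) \to -\infty$, which contradicts the fact that $f$ is continuous and hence bounded below on the compact hyperrectangle $R$. Therefore there exists a first exit time $t^* \in (0,\infty)$, and the corresponding exit point $\boldsymbol{y} := \boldsymbol{z}(t^*)$ lies in $\partial R$.

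Finally I would combine the function-value estimate with the arc-length bound. Integrating gives
\[
f(\boldsymbol{x}) - f(\boldsymbol{y}) \;=\; \int_0^{t^*} \|\nabla f(\boldsymbol{z}(s))\| \, ds \;>\; \varepsilon \, t^*,
\]
and since the flow has unit speed, $\|\boldsymbol{x} - \boldsymbol{y}\| \leq t^*$ by the triangle inequality for curves. Combining these yields $f(\boldsymbol{y}) < f(\boldsymbol{x}) - \varepsilon \|\boldsymbol{x} - \boldsymbol{y}\|$, which contradicts the assumed $\varepsilon$-unreachability of every boundary point from $\boldsymbol{x}$. This contradiction completes the proof.

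The only subtlety is making sure the normalized flow is well-defined right up to the exit time — but this follows cleanly because $\|\nabla f\|$ is bounded away from zero on $R$ under the contradictory hypothesis and $f$ is smooth, so the vector field is Lipschitz on any compact subset strictly inside $R$ and uniformly bounded in norm, ruling out finite-time blowup before hitting $\partial R$. I do not foresee any serious obstacle beyond this standard ODE bookkeeping.
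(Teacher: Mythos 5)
Your proof is correct, and it is essentially the argument behind this lemma: the paper itself does not reprove it but imports it from \citet{hollender2023computational}, whose proof (and the sketch in the paper's technical overview, ``Boundary unreachability implies stationary point inside'') is exactly this gradient-flow-trapping contradiction — follow the (normalized) steepest-descent flow from $\boldsymbol{x}$, note $f$ drops faster than $\varepsilon$ per unit arc length while $\|\nabla f\|>\varepsilon$, and conclude the exit point on $\partial R$ would be $\varepsilon$-reachable. Your handling of the ODE well-posedness and of the chord-versus-arc-length inequality is sound, so there is nothing to fix.
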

\begin{corollary}[\bf{Corollary 2 in \citet{hollender2023computational}}]
\label{lmm:stat}
Let $R = [a_1, b_1] \times \cdots \times [a_d, b_d]$ be a full-dimensional hyperrectangle in $\R^d$ and $\boldsymbol{x}$ a
point in $R$. For any $\varepsilon > 0$, if all $\boldsymbol{y} \in \partial R$ are $(\varepsilon/2)$-unreachable from $\boldsymbol{x}$, and $\max_i(b_i - a_i) \leq \frac{\varepsilon}{2\sqrt{d}L}$, then $\boldsymbol{x}$ is an $\varepsilon$-stationary point of $f$.
\end{corollary}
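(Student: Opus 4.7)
The plan is to chain Lemma~\ref{lmm:trap} with a Lipschitz gradient argument, using the small-diameter hypothesis to transfer stationarity from some interior witness point to $\boldsymbol{x}$ itself. Since the paper works with $f \in \mathcal{F}_1(L)$, we have $\nabla f$ that is $L$-Lipschitz, and this is the only additional ingredient beyond Lemma~\ref{lmm:trap} that we will need.

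First, I would apply Lemma~\ref{lmm:trap} with the slackened parameter $\varepsilon/2$ in place of $\varepsilon$. The hypothesis of the corollary states that every boundary point $\boldsymbol{y} \in \partial R$ is $(\varepsilon/2)$-unreachable from $\boldsymbol{x}$, which is precisely what the lemma needs. Consequently, $R$ must contain some $(\varepsilon/2)$-stationary point $\boldsymbol{z}$, i.e., $\|\nabla f(\boldsymbol{z})\| \le \varepsilon/2$.

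Next, I would bound the distance $\|\boldsymbol{x} - \boldsymbol{z}\|$. Since both $\boldsymbol{x}$ and $\boldsymbol{z}$ lie in the hyperrectangle $R$, their Euclidean distance is at most the diameter of $R$, which satisfies
\[
\|\boldsymbol{x} - \boldsymbol{z}\| \;\le\; \sqrt{\sum_{i=1}^{d}(b_i - a_i)^2} \;\le\; \sqrt{d}\,\max_i (b_i - a_i) \;\le\; \sqrt{d}\cdot \frac{\varepsilon}{2\sqrt{d}\,L} \;=\; \frac{\varepsilon}{2L}.
\]
Then a single application of the $L$-Lipschitz gradient property gives $\|\nabla f(\boldsymbol{x}) - \nabla f(\boldsymbol{z})\| \le L\|\boldsymbol{x} - \boldsymbol{z}\| \le \varepsilon/2$, and the triangle inequality yields $\|\nabla f(\boldsymbol{x})\| \le \|\nabla f(\boldsymbol{z})\| + \|\nabla f(\boldsymbol{x}) - \nabla f(\boldsymbol{z})\| \le \varepsilon/2 + \varepsilon/2 = \varepsilon$, which is exactly the claim that $\boldsymbol{x}$ is an $\varepsilon$-stationary point.

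There is no real obstacle here; the argument is essentially a two-line deduction once Lemma~\ref{lmm:trap} is in hand. The only thing worth double-checking is the exact packaging of constants: the factor $2\sqrt{d}$ in the diameter bound is tuned so that (i) the $\sqrt{d}$ absorbs the conversion from coordinate-wise to Euclidean diameter, and (ii) the factor of $2$ absorbs the loss from using $(\varepsilon/2)$-unreachability rather than $\varepsilon$-unreachability in Lemma~\ref{lmm:trap}. These are the only two non-trivial quantitative choices in the proof.
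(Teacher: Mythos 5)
Your proof is correct and is essentially the standard derivation of this corollary: the paper itself imports it from \citet{hollender2023computational} without reproving it, and the argument there is exactly yours — apply Lemma~\ref{lmm:trap} at level $\varepsilon/2$ to get an $(\varepsilon/2)$-stationary witness $\boldsymbol{z}\in R$, then use the diameter bound $\sqrt{d}\max_i(b_i-a_i)\le \varepsilon/(2L)$ and $L$-Lipschitzness of $\nabla f$ to transfer stationarity to $\boldsymbol{x}$. No gaps.
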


\begin{definition}[\bf{Nice $\delta$-net}]
Let $\delta > 0$ and let $R$ be a $k$-dimensional hyperrectangle in $\R^d$. A set of
points $S \subseteq R$ is a nice $\delta$-net of $R$ if for any face $F$ of $R$ it holds that $S \cap F$ is a $\delta$-net of $F$.
\end{definition}

The construction of nice $\delta$-net with reasonable size can be summarized in the following lemma.
\begin{lemma}[Lemma 9 in \cite{hollender2023computational}]
\label{lmm:net_construct}
Let $R = [a_1,b_1] \times \dots \times [a_d,b_d]$ be a $k$-dimensional hyperrectangle in $\mathbb{R}^d$. Then, for any $\delta > 0$, there exists a nice $\delta$-net $S$ of $R$ with $|S| = \prod_{i=1}^d (\lceil \sqrt{k}(b_i-a_i)/2\delta \rceil + 1)$. In particular, if $R$ is $(d-1)$-dimensional, then we have $|S| \leq (\sqrt{d}r/2\delta)^{d-1}$ for any $r$ satisfying $r \geq \max_i (b_i-a_i)$ and $r \geq 8 \sqrt{d} \delta$.
\end{lemma}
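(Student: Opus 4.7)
The plan is to construct $S$ explicitly as a Cartesian product of one-dimensional grids. For each coordinate $i\in[d]$, place $n_i+1 := \lceil \sqrt{k}(b_i-a_i)/(2\delta)\rceil+1$ equally spaced points in $[a_i,b_i]$ (which collapses to the single point $a_i$ when $a_i=b_i$), and let $S$ be their product. The cardinality formula $|S|=\prod_{i=1}^d (n_i+1)$ is then immediate from the construction, matching the first claim.

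To verify niceness, I would observe that every face $F$ of $R$ is obtained by freezing some subset of coordinates at either $a_i$ or $b_i$. Since both endpoints are always among the chosen one-dimensional grid points for coordinate $i$, the intersection $S\cap F$ is exactly the induced product grid on $F$. Any $\boldsymbol{y}\in F$ then differs from its nearest point of $S\cap F$ by at most $\delta/\sqrt{k}$ in each of its at most $k$ non-frozen coordinates (and zero in the frozen ones); summing squared errors gives Euclidean distance $\le \sqrt{k\cdot \delta^2/k}=\delta$, exactly what a $\delta$-net of $F$ requires.

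For the "in particular" bound, take $k=d-1$, so exactly one coordinate contributes a factor of $1$ and each of the other $d-1$ contributes $\lceil \sqrt{d-1}(b_i-a_i)/(2\delta)\rceil+1 \le \sqrt{d-1}\,r/(2\delta)+2$ using $b_i-a_i\le r$. The remaining step is the elementary inequality
\[ \frac{\sqrt{d-1}\,r}{2\delta}+2\ \le\ \frac{\sqrt{d}\,r}{2\delta}, \]
equivalent to $r/\delta \ge 4(\sqrt{d}+\sqrt{d-1})$; since $4(\sqrt{d}+\sqrt{d-1})\le 8\sqrt{d}$, this is precisely what the hypothesis $r\ge 8\sqrt{d}\,\delta$ guarantees. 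Raising to the $(d-1)$-th power yields $|S|\le (\sqrt{d}\,r/(2\delta))^{d-1}$.

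The only mildly delicate point --- and thus the hardest step --- is this last absorption: the ceiling rounding plus the endpoint-inclusive ``$+1$'' together leak an additive constant $2$ per coordinate, and the threshold $r\ge 8\sqrt{d}\,\delta$ is tailored precisely so that this constant can be swallowed into the bump from $\sqrt{d-1}$ to $\sqrt{d}$ in the exponent base. Everything else is a direct, deterministic verification from the product construction, with no probabilistic or high-dimensional arguments required.
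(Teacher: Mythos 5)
The paper does not prove this lemma at all --- it imports it verbatim as Lemma 9 of \cite{hollender2023computational} --- so there is no in-paper argument to compare against; your write-up supplies the missing proof. Your product-grid construction is correct and complete: the per-coordinate spacing is at most $2\delta/\sqrt{k}$, so rounding each free coordinate to the nearest grid value costs at most $\delta/\sqrt{k}$ per coordinate and hence at most $\delta$ in Euclidean norm on every face, the stated cardinality is exact since both endpoints are grid points, and the absorption of the additive $+2$ per coordinate into the gap between $\sqrt{d-1}\,r/(2\delta)$ and $\sqrt{d}\,r/(2\delta)$ via $r \ge 8\sqrt{d}\,\delta$ (using $4(\sqrt{d}+\sqrt{d-1}) \le 8\sqrt{d}$) is exactly the right accounting; this is the standard argument and matches the construction in the cited source.
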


The following lemma shows the unreachablility of $\delta$-net of $E$ implies unreachablility of $S$.

\begin{lemma}[Lemma 10 in \cite{hollender2023computational}]\label{lem:increase-eps}
Let $E$ be a $(d-1)$-dimensional hyperrectangle of $\mathbb{R}^d$ and let $S$ be a nice $\delta$-net of $E$. Let $\varepsilon > 0$ and let $\boldsymbol{x} \in \mathbb{R}^d$ with $\mathrm{dist}(\boldsymbol{x},E) > 0$. Then, if all $\boldsymbol{z} \in S$ are $\varepsilon$-unreachable from $\boldsymbol{x}$, it follows that all $\boldsymbol{y} \in E$ are $\varepsilon'$-unreachable from $\boldsymbol{x}$, where
\[\varepsilon' = \varepsilon + \frac{\delta^2}{2 \mathrm{dist}(\boldsymbol{x},E)} \left(L + \frac{2\varepsilon}{\mathrm{dist}(\boldsymbol{x},E)}\right).\]
\end{lemma}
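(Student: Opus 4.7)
Fix $\boldsymbol{y}\in E$ arbitrarily and write $D:=\mathrm{dist}(\boldsymbol{x},E)$. Since $S$ is a \emph{nice} $\delta$-net, its restriction to whichever face of $E$ contains $\boldsymbol{y}$ in its relative interior is itself a $\delta$-net of that face, so we can pick $\boldsymbol{z}\in S$ with $\|\boldsymbol{y}-\boldsymbol{z}\|_2\le\delta$. The hypothesis $f(\boldsymbol{z})>f(\boldsymbol{x})-\varepsilon\|\boldsymbol{x}-\boldsymbol{z}\|_2$ must be upgraded to $f(\boldsymbol{y})>f(\boldsymbol{x})-\varepsilon'\|\boldsymbol{x}-\boldsymbol{y}\|_2$; after rearrangement it suffices to prove
\[
\varepsilon'\|\boldsymbol{x}-\boldsymbol{y}\|_2\ \ge\ \varepsilon\|\boldsymbol{x}-\boldsymbol{z}\|_2+\bigl(f(\boldsymbol{z})-f(\boldsymbol{y})\bigr).
\]
My plan is to estimate the two pieces on the right separately: the \emph{value gap} $f(\boldsymbol{z})-f(\boldsymbol{y})$ via the $L$-Lipschitz gradient assumption, and the \emph{distance gap} $\|\boldsymbol{x}-\boldsymbol{z}\|_2-\|\boldsymbol{x}-\boldsymbol{y}\|_2$ via a Pythagorean decomposition around the affine hull of $E$.

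For the value gap, the quadratic upper bound from $L$-smoothness gives $f(\boldsymbol{z})-f(\boldsymbol{y})\le\langle\nabla f(\boldsymbol{z}),\boldsymbol{z}-\boldsymbol{y}\rangle+\tfrac{L\delta^2}{2}$, and applying the same inequality at $\boldsymbol{z}$ in the direction of $\boldsymbol{x}$ together with the $\varepsilon$-unreachability hypothesis yields the one-sided control $\langle\nabla f(\boldsymbol{z}),\boldsymbol{x}-\boldsymbol{z}\rangle\le\varepsilon\|\boldsymbol{x}-\boldsymbol{z}\|_2+\tfrac{L}{2}\|\boldsymbol{x}-\boldsymbol{z}\|_2^2$, pinning down the component of $\nabla f(\boldsymbol{z})$ along $\boldsymbol{x}-\boldsymbol{z}$. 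For the distance gap, let $\boldsymbol{x}_H$ be the orthogonal projection of $\boldsymbol{x}$ onto the affine hull $H$ of $E$; since $\boldsymbol{y},\boldsymbol{z}\in H$ and $\boldsymbol{x}-\boldsymbol{x}_H\perp H$, the Pythagorean identity $\|\boldsymbol{x}-\boldsymbol{w}\|_2^2=\|\boldsymbol{x}-\boldsymbol{x}_H\|_2^2+\|\boldsymbol{x}_H-\boldsymbol{w}\|_2^2$ for $\boldsymbol{w}\in\{\boldsymbol{y},\boldsymbol{z}\}$, together with the expansion $\|\boldsymbol{x}_H-\boldsymbol{z}\|_2^2-\|\boldsymbol{x}_H-\boldsymbol{y}\|_2^2=2\langle\boldsymbol{x}_H-\boldsymbol{y},\boldsymbol{y}-\boldsymbol{z}\rangle+\|\boldsymbol{y}-\boldsymbol{z}\|_2^2$ and the lower bound $\|\boldsymbol{x}-\boldsymbol{y}\|_2+\|\boldsymbol{x}-\boldsymbol{z}\|_2\ge 2D$, yields the estimate $\|\boldsymbol{x}-\boldsymbol{z}\|_2-\|\boldsymbol{x}-\boldsymbol{y}\|_2\le\tfrac{\langle\boldsymbol{x}_H-\boldsymbol{y},\boldsymbol{y}-\boldsymbol{z}\rangle}{D}+\tfrac{\delta^2}{2D}$.

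Substituting both estimates into the target, the tangential directional term $\langle\nabla f(\boldsymbol{z}),\boldsymbol{z}-\boldsymbol{y}\rangle$ and the tangential distance correction $\varepsilon\langle\boldsymbol{x}_H-\boldsymbol{y},\boldsymbol{y}-\boldsymbol{z}\rangle/D$ cancel at leading order (a fact that uses crucially both that $\boldsymbol{y}-\boldsymbol{z}$ lies tangent to $E$ and that the unreachability-plus-smoothness bound on $\nabla f(\boldsymbol{z})$ matches the tangential part of $\boldsymbol{x}-\boldsymbol{z}$ in the Pythagorean decomposition), leaving only the quadratic residuals $L\delta^2/2$ and $\varepsilon\delta^2/(2D)$; dividing by the lower bound $\|\boldsymbol{x}-\boldsymbol{y}\|_2\ge D$ converts them into the advertised additive correction $\varepsilon'-\varepsilon=\tfrac{\delta^2}{2D}\bigl(L+\tfrac{2\varepsilon}{D}\bigr)$. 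The main obstacle is precisely this cancellation: without any \emph{a priori} magnitude bound on $\nabla f(\boldsymbol{z})$, a crude triangle-plus-Taylor estimate yields an overhead of order $\varepsilon\delta$, which is worse by a factor $\delta/D$ than what is required; it is the codimension-one geometry of $E$, forcing $\boldsymbol{y}-\boldsymbol{z}$ to lie entirely in the tangent hyperplane, that allows the only component of $\nabla f(\boldsymbol{z})$ actually constrained by the hypothesis to absorb this loss.
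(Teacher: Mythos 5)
First, note that the paper does not actually prove this statement: it is imported verbatim as Lemma~10 of \cite{hollender2023computational}, so there is no in-paper proof to compare against, and your proposal must be judged on its own.

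There is a genuine gap, and it sits exactly where you flag ``the main obstacle.'' Your argument invokes the unreachability hypothesis at only \emph{one} net point, the $\boldsymbol{z}$ nearest to $\boldsymbol{y}$ — but the single-point version of the lemma is false, so no correct argument can have this shape. Concretely, take $E=[0,1]\times\{0\}\subset\mathbb{R}^2$, $\boldsymbol{x}=(1/2,D)$, $\boldsymbol{y}=(1/2,0)$, $\boldsymbol{z}=(1/2+\delta,0)$, and let $f$ be linear near $E$ with $\nabla f\equiv(\varepsilon,0)$ and $f(\boldsymbol{z})$ just above its threshold $f(\boldsymbol{x})-\varepsilon\|\boldsymbol{x}-\boldsymbol{z}\|$. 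Then $\boldsymbol{z}$ is $\varepsilon$-unreachable and $\|\boldsymbol{z}-\boldsymbol{y}\|\le\delta$, yet $f(\boldsymbol{y})=f(\boldsymbol{z})-\varepsilon\delta$ violates the conclusion once $\delta\ll\min(D,\varepsilon/L)$; the full lemma survives only because the net points on the \emph{other} side of $\boldsymbol{y}$ would then be reachable, a constraint you never use. Tracing your two estimates through the target inequality, what you ultimately need is
\[
\bigl\langle \nabla f(\boldsymbol{z})-\tfrac{\varepsilon}{D}(\boldsymbol{x}_H-\boldsymbol{y}),\ \boldsymbol{z}-\boldsymbol{y}\bigr\rangle\ \le\ \tfrac{\varepsilon\delta^2}{2D},
\]
a tight, two-sided control of the component of $\nabla f(\boldsymbol{z})$ tangent to $E$. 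The only information you derive about $\nabla f(\boldsymbol{z})$ is the one-sided bound $\langle\nabla f(\boldsymbol{z}),\boldsymbol{x}-\boldsymbol{z}\rangle\le\varepsilon\|\boldsymbol{x}-\boldsymbol{z}\|+\tfrac{L}{2}\|\boldsymbol{x}-\boldsymbol{z}\|^2$ — a single scalar constraint in a direction transversal to $E$, with a slack $\tfrac{L}{2}\|\boldsymbol{x}-\boldsymbol{z}\|^2$ that never reappears in $\varepsilon'$. It places no restriction on the tangential component, so the advertised cancellation is asserted rather than proved (in the example above the left-hand side is $\varepsilon\delta$ while the right-hand side is $\varepsilon\delta^2/2D$). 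Relatedly, your opening reduction ``it suffices to show $\varepsilon'\|\boldsymbol{x}-\boldsymbol{y}\|\ge\varepsilon\|\boldsymbol{x}-\boldsymbol{z}\|+(f(\boldsymbol{z})-f(\boldsymbol{y}))$'' discards the margin by which $f(\boldsymbol{z})$ exceeds its threshold, and that sufficient condition itself fails in valid instances (e.g.\ $f$ a large constant near $E$ plus a tangential slope of order $\varepsilon$ near $\boldsymbol{y}$).

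The mechanism that actually proves the lemma is the unreachability of net points \emph{surrounding} $\boldsymbol{y}$ on its minimal face: writing $\phi(\boldsymbol{u}):=f(\boldsymbol{u})+\varepsilon\|\boldsymbol{u}-\boldsymbol{x}\|$, the hypothesis says $\phi>f(\boldsymbol{x})$ at every net point, and $\phi$ restricted to $E$ has second derivative at most $L+\varepsilon/\mathrm{dist}(\boldsymbol{x},E)$; expressing $\boldsymbol{y}$ as a convex combination of nearby net points and using this approximate concavity transfers the lower bound to $\boldsymbol{y}$ with loss $\tfrac{\delta^2}{2}\bigl(L+O(\varepsilon/D)\bigr)$, the unknown tangential gradient cancelling between net points on opposite sides of $\boldsymbol{y}$. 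That is the cancellation you are reaching for, and it cannot be manufactured from the unreachability of a single $\boldsymbol{z}$.
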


\subsection{Gradient flow fully parallel trapping}
\label{app:alg}

Our constant iteration algorithm for finding stationary points is summarized in Algorithm~\ref{alg:main}.
In Line 2, the algorithm initializes the region with a unreachable boundary.  
In Lines 3--10, the gradient is trapped using multiple \( (d-1) \)-dimensional grids, followed by an update to the current solution.  
In Lines 11--18, the hyperrectangle is updated by reducing the length of its longest side to \( 1 / \ell_t \).  
In Line 20, the accuracy constant is slightly increased to ensure that the boundary of the updated hyperrectangle remains unreachable to the current solution.

\begin{algorithm2e}[H]
 \caption{The Gradient Flow Grid Trapping (GFGT) Algorithm
 }
  \label{alg:main}
  \SetAlgoLined
\DontPrintSemicolon
\SetNoFillComment
  \SetKwInOut{Input}{Input}
  \Input{
accuracy $\varepsilon > 0$, smooth parameter $L > 0$, dimension $d \geq 2$, initialization point $\boldsymbol{x}^0 \in \R^d$, query access to $f : \R^d \to  [0, +\infty)$ with $L$-Lipschitz $\nabla f$.
}
Set $t=0$ and $\varepsilon_0 = \varepsilon/4$,\;
Initialize hyperrectangle $R_0 :=[a_1^1,b_1^1]\times \cdots\times [a_d^1,b_d^1]$ as 
\[R_0 = \{\boldsymbol{x}\in \R^d:\left\|\boldsymbol{x}-\boldsymbol{x}^0\right\|_\infty \leq 2f(\boldsymbol{x}^0)/\varepsilon_0\}.\]
\For{$t+1 \in [k]$}{
Set $r_j^t = b_j^t- a_j^t$ for $j\in [d]$ and $r^t = \min_j r_j^t$,\;
Set the number of trap barriers on each coordinate direction $\ell_t = \left( \frac{4f(\boldsymbol{x}^0)\sqrt{d}L}{3^k\cdot\varepsilon_0\cdot \varepsilon}
\right)^{\frac{\frac{d-1}{d+1}\left(\frac{2d}{d+1}\right)^t}{\left(\frac{2d}{d+1}\right)^k-1}}$ ,\;
Set the gap size of trap barrier as $\delta_t = 
 \sqrt{\frac{\varepsilon r^t(3/4)^{\lfloor t d\rfloor}}{40\cdot 3^k\cdot \ell_t d\sqrt{d}L }}$, \;
Query $f$ on the nice $\delta_t$-net $S(E)$ of any $E\in \mathcal{E}^t$ where 
\[\mathcal{E}^t =\{[a_1^t,b_1^t]\times\cdots\times [a_j^t+mr_j^t/\ell_{t}]\times\cdots\times [a_d^t,b_d^t]: j\in [d], m\in [\ell_t-1]\} ,\]
Let $S^\star = \left\{\boldsymbol{z}\in \bigcup\limits_{ E\in \mathcal{E}^t}S(E) : f(\boldsymbol{z})\leq f(\boldsymbol{x}^t) - \varepsilon_t \left\|\boldsymbol{x}^t - \boldsymbol{z}\right\|_2\right\}$,\;
\uIf{$S^\star=\emptyset$}{$\boldsymbol{x}^{t+1}=\boldsymbol{x}^t$,}
\uElse{
$\boldsymbol{x}^{t+1}=\arg\min_{\boldsymbol{z}\in S^\star} f(\boldsymbol{z})$,
}
\For{$j\in [d]$}{
\uIf{ $a_j^{t}\leq [\boldsymbol{x}^{t+1}]_j\leq a_j^{t} + r_j^t/\ell_t $}{
$a_j^{t+1} = a_j^{t}$ and $b_j^{t+1} = a_j^{t} + 2r_j^t/\ell_t$, 
}
\uElseIf{$b_j^{t} - r_j^t/\ell_t\leq \boldsymbol{x}^{t+1}_j\leq b_j^{t}$}{
$a_j^{t+1} = b_j^{t}-2 r_j^t/\ell_t$ and $b_j^{t+1} = b_j^{t}$,
}\uElse{
Assume $a_j^{t}+ m_1r_j^t/\ell_t  \leq \boldsymbol{x}^{t+1}_j<a_j^{t} + m_2r_j^t/\ell_t $ with $m_1\geq 1$ and $m_2\leq \ell_t-1$,\;
update $a_j^{t+1} = a_j^{t} + (m_1-1) r_j^t/\ell_t$ and $b_j^{t+1} = a_j^{t}  + (m_2+1) r_j^t /\ell_t$,
}
}
Update $\varepsilon_{t+1} = \varepsilon_t + \frac{\varepsilon(3/4)^{\lfloor t d\rfloor}}{16d}$.\;
}
\Return ${\boldsymbol{x}_{k}}$.
\end{algorithm2e}

\subsection{Analysis of Algorithm \ref{alg:main}: Proof of the first part of Theorem \ref{the:main3}}
\label{app:analysis}

\begin{lemma}[Complexity]
For any $d\geq 2$, the number of queries to $f$ in each iteration is bound by
\[C(d,k,L) f(\boldsymbol{x}^0)^{(d-1)/2}\varepsilon^{-\frac{\frac{d-1}{2}}{\left(\frac{2d}{d+1}\right)^k-1}-{d-1}},\]
where $C(d,k,L)$ is a constant depends on $d,k,L$.
\end{lemma}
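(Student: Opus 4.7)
The plan is to bound, uniformly over $t\in\{0,\ldots,k-1\}$, the number of function evaluations $q_t$ performed at iteration $t$ of Algorithm~\ref{alg:main}. At iteration $t$, Algorithm~\ref{alg:main} queries a nice $\delta_t$-net on each of the $(d-1)$-dimensional faces in $\mathcal{E}^t$, and $|\mathcal{E}^t|=d(\ell_t-1)\leq d\ell_t$. Applying Lemma~\ref{lmm:net_construct} to each face (whose longest side is at most $R^t:=\max_j r_j^t$) gives the starting estimate
\[
q_t \;\leq\; d\ell_t\cdot\bigl(\sqrt{d}\,R^t/(2\delta_t)\bigr)^{d-1}.
\]

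The next step would be to control the evolution of the side lengths across rounds. Examination of the compression rules in Lines~11--18 shows each coordinate side shrinks by a factor in $[2/\ell_t,\,3/\ell_t]$ per round (the two ``edge'' branches give factor $2/\ell_t$, the ``interior'' branch gives factor $3/\ell_t$), so $R^t\leq R^0\prod_{s<t}(3/\ell_s)$ and $r^t\geq r^0\prod_{s<t}(2/\ell_s)$ with $R^0=r^0=4f(\boldsymbol{x}^0)/\varepsilon_0=16f(\boldsymbol{x}^0)/\varepsilon$. Since $k=\Theta(1)$, the aspect ratio $R^t/r^t\leq(3/2)^k$ is a $(d,k)$-constant. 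Substituting the explicit $\delta_t$ and absorbing all $t$-bounded factors, such as $3^{t(d-1)},\,2^{-t(d-1)/2}$, and $(4/3)^{\lfloor td\rfloor(d-1)/2}$, into a single constant $C_1(d,k,L)$ yields
\[
q_t \;\leq\; C_1(d,k,L)\,f(\boldsymbol{x}^0)^{(d-1)/2}\,\varepsilon^{-(d-1)}\cdot \ell_t^{1+(d-1)/2}\Big(\prod_{s<t}\ell_s\Big)^{-(d-1)/2}.
\]

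The heart of the argument is showing that the specific choice of $\ell_t$ makes the last factor uniform in $t$. Write $\ell_t=M^{\beta_t}$ with $M:=4f(\boldsymbol{x}^0)\sqrt{d}L/(3^k\varepsilon_0\varepsilon)$ and $\beta_t:=\tfrac{d-1}{d+1}(2d/(d+1))^t/((2d/(d+1))^k-1)$. Setting $\alpha:=(d-1)/2$, the forward difference of $(1+\alpha)\beta_t-\alpha\sum_{s<t}\beta_s$ equals $(1+\alpha)\beta_{t+1}-(1+2\alpha)\beta_t$, which vanishes exactly when $\beta_{t+1}/\beta_t=(1+2\alpha)/(1+\alpha)=2d/(d+1)$. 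Consequently the quantity is independent of $t$, and evaluating at $t=0$ gives $(1+\alpha)\beta_0$. A short calculation with the geometric-sum identity $\sum_{t=0}^{k-1}\beta_t=1$ pins down this value, so that
\[
\ell_t^{1+(d-1)/2}\Big(\prod_{s<t}\ell_s\Big)^{-(d-1)/2} \;=\; M^{(1+(d-1)/2)\beta_0}.
\]
Substituting $M\propto f(\boldsymbol{x}^0)L/\varepsilon^2$ yields the stated bound after collecting the powers of $f(\boldsymbol{x}^0)$ and $\varepsilon$. The main obstacle is the bookkeeping: correctly isolating the two sources of $\ell_t$ powers (the factor $\ell_t$ from the number of trap hyperplanes in $\mathcal{E}^t$ and the factor $\ell_t^{(d-1)/2}$ from $\delta_t^{-(d-1)}$), and matching them against the geometric progression in $\beta_t$ so that the product telescopes to a $t$-uniform expression. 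The accuracy-increment factor $(3/4)^{\lfloor td\rfloor}$ baked into $\delta_t$, which is needed in the correctness argument for propagating boundary unreachability across rounds, contributes only a $(d,k)$-constant here.
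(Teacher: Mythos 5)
Your proof is correct and follows essentially the same route as the paper's: count the $d\ell_t$ trap faces, bound each nice $\delta_t$-net via Lemma~\ref{lmm:net_construct}, track the geometric shrinkage $R^t \le R^0\prod_{s<t}(3/\ell_s)$, and verify that the choice of $\ell_t$ makes the per-round count uniform in $t$ — your forward-difference identity $\beta_{t+1}/\beta_t = 2d/(d+1)$ is simply a cleaner justification of the exponent cancellation that the paper asserts in its final inequality. One remark: since $M\propto \varepsilon^{-2}$, your calculation lands on $\varepsilon^{-(d-1)-\frac{d-1}{(2d/(d+1))^k-1}}$, which matches the paper's own proof and Theorem~\ref{the:main3} rather than the lemma's displayed exponent $\frac{(d-1)/2}{(2d/(d+1))^k-1}$, which appears to be a typo in the statement.
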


\begin{proof}
In iteration $t$, there is $d\cdot \ell_{t}$ hyperrectangles with dimension $d-1$ and  side length bounded by 
\[\left(\prod_{i=0}^{t-1}\frac{3}{\ell_t}\right) \cdot \frac{2f(\boldsymbol{x}^0)}{\varepsilon}.\]
Combining Lemma \ref{lmm:net_construct}, we have 
\begin{align*}
\# \mbox{queries in iteration }t &\leq \left( \sqrt{d}\cdot 3^t r^t \cdot \frac{1}{2 \sqrt{\frac{\varepsilon r^t(3/4)^{\lfloor t d\rfloor}}{40\cdot 3^k\cdot \ell_t d\sqrt{d}L }}} \right)^{d-1}\cdot d\cdot \ell_{t}\\
&\leq d\left( \sqrt{d}\cdot 3^t \cdot \frac{\sqrt{{40\cdot 3^k\cdot  d\sqrt{d}L }}}{2 \sqrt{ (3/4)^{\lfloor t d\rfloor}}} \right)^{d-1} \cdot\ell_t^{(d+1)/2}\cdot (r^t)^{(d-1)/2} \cdot\varepsilon^{-(d-1)/2}\\ 
&:= C(d,k,L)\cdot\ell_t^{(d+1)/2}\cdot (r^t)^{(d-1)/2}\cdot \varepsilon^{-(d-1)/2}\\ 
&\leq C(d,k,L)\cdot\ell_t^{(d+1)/2}\cdot \left(\left(\prod_{i=0}^{t-1}\frac{3}{\ell_t}\right) \cdot \frac{2f(\boldsymbol{x}^0)}{\varepsilon_0}\right)^{(d-1)/2}\cdot \varepsilon^{-(d-1)/2}\\ 
&\leq C(d,k,L)\cdot\left(\frac{2f(\boldsymbol{x}^0)}{\varepsilon_0}\right)^{(d-1)/2}\cdot \varepsilon^{-\frac{{d-1}}{\left(\frac{2d}{d+1}\right)^k-1}-\frac{d-1}{2}}
\end{align*}
\end{proof}

\begin{lemma}[\bf{Correctness}]
The output of the algorithm $\boldsymbol{x}^k$ is an $\varepsilon$-stationary point of $f$.
\end{lemma}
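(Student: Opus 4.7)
The plan is to establish, by induction on $t$, the invariant:
\emph{all $\boldsymbol{y}\in\partial R_t$ are $\varepsilon_t$-unreachable from $\boldsymbol{x}^t$, and $R_t$ contains $\boldsymbol{x}^t$.}
Once this invariant is verified for $t=k$, Corollary~\ref{lmm:stat} finishes the proof, provided we also check the two quantitative bounds $\varepsilon_k\le\varepsilon/2$ and $\max_j(b_j^k-a_j^k)\le\varepsilon/(2\sqrt{d}L)$. The base case $t=0$ is immediate: since $f\ge 0$ and any $\boldsymbol{y}\in\partial R_0$ satisfies $\|\boldsymbol{x}^0-\boldsymbol{y}\|_2\ge\|\boldsymbol{x}^0-\boldsymbol{y}\|_\infty=2f(\boldsymbol{x}^0)/\varepsilon_0$, one gets
$f(\boldsymbol{y})\ge 0 \ge -f(\boldsymbol{x}^0)= f(\boldsymbol{x}^0)-\varepsilon_0\cdot 2f(\boldsymbol{x}^0)/\varepsilon_0 \ge f(\boldsymbol{x}^0)-\varepsilon_0\|\boldsymbol{x}^0-\boldsymbol{y}\|_2$ only as a non-strict inequality, so a standard infinitesimal perturbation of the initial radius (or reasoning with $\ge$ replaced by $>$) suffices.

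\textbf{Inductive step, decomposition of $\partial R_{t+1}$.} Split $\partial R_{t+1}$ into \emph{old} faces (contained in $\partial R_t$) and \emph{new} faces (contained in one of the barriers $E\in\mathcal{E}^t$ where a nice $\delta_t$-net $S(E)$ was queried). First handle the old faces. Either $\boldsymbol{x}^{t+1}=\boldsymbol{x}^t$, in which case $\varepsilon_t$-unreachability from $\boldsymbol{x}^{t+1}$ is inherited directly, or $\boldsymbol{x}^{t+1}\in S^\star$, so $f(\boldsymbol{x}^{t+1})\le f(\boldsymbol{x}^t)-\varepsilon_t\|\boldsymbol{x}^{t+1}-\boldsymbol{x}^t\|_2$. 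In the latter case, for $\boldsymbol{y}\in\partial R_t$,
\begin{align*}
f(\boldsymbol{y}) &> f(\boldsymbol{x}^t)-\varepsilon_t\|\boldsymbol{x}^t-\boldsymbol{y}\|_2 \\
&\ge f(\boldsymbol{x}^{t+1})+\varepsilon_t\|\boldsymbol{x}^{t+1}-\boldsymbol{x}^t\|_2-\varepsilon_t\|\boldsymbol{x}^t-\boldsymbol{y}\|_2 \\
&\ge f(\boldsymbol{x}^{t+1})-\varepsilon_t\|\boldsymbol{x}^{t+1}-\boldsymbol{y}\|_2,
\end{align*}
by the triangle inequality, yielding $\varepsilon_t$-, hence $\varepsilon_{t+1}$-, unreachability from $\boldsymbol{x}^{t+1}$.

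\textbf{New faces via Lemma~\ref{lem:increase-eps}.} For any new face $F\subseteq E\in\mathcal{E}^t$, I will show that every queried point $\boldsymbol{z}\in S(E)$ is $\varepsilon_t$-unreachable from $\boldsymbol{x}^{t+1}$. If $S^\star=\emptyset$, this is by definition of $S^\star$. If $\boldsymbol{x}^{t+1}\in S^\star$: for $\boldsymbol{z}\in S^\star$ the minimality gives $f(\boldsymbol{z})\ge f(\boldsymbol{x}^{t+1})$ (trivial unreachability), while for $\boldsymbol{z}\notin S^\star$ the same triangle-inequality chain as above works verbatim. Next I apply Lemma~\ref{lem:increase-eps} on each $E$, using the key geometric observation (b): the update rules in Lines~12--18 always leave at least one grid cell of width $r_j^t/\ell_t$ between $\boldsymbol{x}^{t+1}$ and any new face in direction $j$, so $\mathrm{dist}(\boldsymbol{x}^{t+1},E)\ge r^t/\ell_t$. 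Substituting $\delta_t^2=\varepsilon r^t(3/4)^{\lfloor td\rfloor}/(40\cdot 3^k\ell_t d\sqrt{d}L)$ makes the error term
\[
\frac{\delta_t^2}{2\,\mathrm{dist}(\boldsymbol{x}^{t+1},E)}\Bigl(L+\frac{2\varepsilon_t}{\mathrm{dist}(\boldsymbol{x}^{t+1},E)}\Bigr)
\]
bounded by $\varepsilon(3/4)^{\lfloor td\rfloor}/(16d)$, which is exactly $\varepsilon_{t+1}-\varepsilon_t$; thus all points of $F$ are $\varepsilon_{t+1}$-unreachable from $\boldsymbol{x}^{t+1}$, closing the induction.

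\textbf{Termination.} At $t=k$ one has $\varepsilon_k=\varepsilon_0+\sum_{t=0}^{k-1}\varepsilon(3/4)^{\lfloor td\rfloor}/(16d)\le \varepsilon/4+\varepsilon/(16d(1-(3/4)^d))\le \varepsilon/2$ for $d\ge 2$. Separately, the update rules shrink every side by a factor at most $3/\ell_t$ per round, so the longest side of $R_k$ is at most $(4f(\boldsymbol{x}^0)/\varepsilon_0)\prod_{t=0}^{k-1}(3/\ell_t)$, and the exponent choice for $\ell_t$ is designed so that $\sum_{t=0}^{k-1}\frac{(d-1)(2d/(d+1))^t/(d+1)}{(2d/(d+1))^k-1}=1$, giving $\prod_t\ell_t$ exactly large enough that this maximum side length is $\le\varepsilon/(2\sqrt{d}L)$. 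An application of Corollary~\ref{lmm:stat} to $R_k$ and $\boldsymbol{x}^k$ then certifies that $\boldsymbol{x}^k$ is $\varepsilon$-stationary. The main technical obstacle is the joint tuning of $\delta_t$ and $\ell_t$: $\delta_t$ must be small enough that the net-to-face blow-up in Lemma~\ref{lem:increase-eps} is absorbed into the tiny $(3/4)^{\lfloor td\rfloor}$ budget $\varepsilon_{t+1}-\varepsilon_t$ (which itself must sum to at most $\varepsilon/4$), while $\ell_t$ must balance the per-round shrinkage factors across $k$ rounds so that the total contraction reaches the Lipschitz threshold $\varepsilon/(2\sqrt{d}L)$ without blowing up the per-iteration query count beyond the claimed bound.
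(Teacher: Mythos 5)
Your proposal is correct and follows essentially the same route as the paper's proof: the same inductive invariant (boundary $\varepsilon_t$-unreachability from $\boldsymbol{x}^t$), the same split of $\partial R_{t+1}$ into old faces handled by the triangle inequality and new faces handled by Lemma~\ref{lem:increase-eps} with the distance lower bound $\mathrm{dist}(\boldsymbol{x}^{t+1},E)\ge r^t/\ell_t$, and the same closing application of Corollary~\ref{lmm:stat} after checking $\varepsilon_k\le\varepsilon/2$ and the side-length bound. The only cosmetic difference is your worry about strictness in the base case: the paper gets a strict inequality for free from the factor of $2$ in the radius of $R_0$ together with the harmless assumption $f(\boldsymbol{x}^0)>0$, so no perturbation is needed.
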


\begin{proof}
We assume $f(\boldsymbol{x}^0)>0$ otherwise, $\boldsymbol{x}^0$ is a global minimum and thus a stationary point.

Let $R_t$ denote the hyperrectangle at the beginning of iteration $t = 0,\ldots,k-1$. We consider the following invariant at every iteration: for all $\boldsymbol{y}\in \partial R_t$ are $\varepsilon_t$-unreachable from $\boldsymbol{x}^t$.

If such invariant holds, we observe that 
\[\max_i(b_i^k- a_i^k)\leq \left(\prod_{i=0}^{k-1}\frac{3}{\ell_t}\right) \cdot \frac{2f(\boldsymbol{x}^0)}{\varepsilon_0}\leq \frac{\varepsilon}{2\sqrt{d}L},
\]
and $\varepsilon_{k}$ can be bounded as 
\[\varepsilon_{k} = \varepsilon_0 + \sum\limits_{t=0}^{T-1}\frac{\varepsilon(3/4)^{\lfloor t d\rfloor}}{16d}\leq \varepsilon_0 + \sum\limits_{t=0}^{\infty}\frac{\varepsilon(3/4)^{\lfloor t d\rfloor}}{16d} \leq \frac{\varepsilon}{2}.\]
By Corollary \ref{lmm:stat}, $\boldsymbol{x}_{k}$ is $\varepsilon$-stationary point of $f$.

Now we prove the invariant by induction. For $t=0$, for any $\boldsymbol{y}\in R_0$, by definition of $R_0$ we have $\left\|\boldsymbol{x}^0-\boldsymbol{y}\right\|_2\geq \left\|\boldsymbol{x}^0-\boldsymbol{y}\right\|_\infty = 2f(\boldsymbol{x}^0)/\varepsilon_0$ thus 
\[f(\boldsymbol{y})\geq 0\geq 2f(\boldsymbol{x}^0) - \varepsilon_0 \left\|\boldsymbol{x}^0-\boldsymbol{y}\right\|_2 > f(\boldsymbol{x}^0) - \varepsilon_0 \left\|\boldsymbol{x}^0-\boldsymbol{y}\right\|_2.\]
Now we show the invariant holds for iteration $t+1$ conditioned on that it holds for iteration $t$.
\paragraph{Case 1: $S^\star = \emptyset$.} In this case $\boldsymbol{x}^{t+1}  = \boldsymbol{x}^t$. For $\boldsymbol{y}\in \partial R_{t+1}\cap \partial R_t$ are $\varepsilon_t$-unreachable from $\boldsymbol{x}^t$.
Thus by $\varepsilon_{t+1}\geq \varepsilon_t$, it is also $\varepsilon_{t+1}$-unreachable from $\boldsymbol{x}^{t+1}$. For $\boldsymbol{y}\in \partial R_{t+1}\setminus \partial R_t\subseteq \bigcup\limits_{ E\in \mathcal{E}^t\cap \partial R_{t+1}}S(E)$, we show as follows.
By construction in Lines 12--19, we have 
\[\mathrm{dist}(\boldsymbol{x}^t,E)\geq \min_j{r_j^t}/\ell_t\geq \left(\prod_{i=0}^{k-1}\frac{1}{\ell_t}\right) \cdot \frac{2f(\boldsymbol{x}^0)}{\varepsilon_0} = \frac{1}{3^k}\frac{\varepsilon}{2\sqrt{d}L}\] 
Combining $S^\star = \emptyset$ and Lemma \ref{lem:increase-eps}, we can claim that for all $\boldsymbol{y} \in \bigcup\limits_{ E\in \mathcal{E}^t\cap \partial R_{t+1}}S(E)$ are $\varepsilon'$-unreachable from $\boldsymbol{x}^{t+1}$, where 
\begin{align*}
\varepsilon' &=
\varepsilon_t + \frac{\delta^2}{2 \mathrm{dist}(\boldsymbol{x},E)} \left(L + \frac{2\varepsilon}{\mathrm{dist}(\boldsymbol{x},E)}\right)\\
&\leq 
\varepsilon_t + \frac{\delta^2}{2 \mathrm{dist}(\boldsymbol{x},E)} \left(L + 3^k 4\sqrt{d}L\right)\\
&\leq 
\varepsilon_t + \frac{\delta^2}{2 \mathrm{dist}(\boldsymbol{x},E)} 3^k 5\sqrt{d}L\\
&\leq 
\varepsilon_t + \frac{\delta^2 \ell_t}{2 r^t} 3^k 5\sqrt{d}L \\
& \leq  
\varepsilon_t + \frac{\varepsilon(3/4)^{\lfloor t d\rfloor}}{16d}  = \varepsilon_{t+1}.
\end{align*}
\paragraph{Case 2: $S^\star \neq \emptyset$.} In this case, $\boldsymbol{x}^{t+1}=\arg\min_{\boldsymbol{z}\in S^\star} f(\boldsymbol{z})$.
For $\boldsymbol{y}\in \partial R_{t+1}\cap \partial R_t$, we have 
\[f(\boldsymbol{y})\geq f(\boldsymbol{x}^t)- \varepsilon_t \left\|\boldsymbol{x}^t- \boldsymbol{y}\right\|_2 \geq f(\boldsymbol{x}^{t+1}) +\varepsilon_t\left\|\boldsymbol{x}^t- \boldsymbol{x}^{t+1}\right\|_2 - \varepsilon_t\left\|\boldsymbol{x}^t- \boldsymbol{y}\right\|_2 \geq f(\boldsymbol{x}^{t+1})  - \varepsilon_t\left\|\boldsymbol{x}^{t+1}- \boldsymbol{y}\right\|_2,\]
\emph{i.e. }, $\boldsymbol{y}$ is $\varepsilon_t$-unreachable from $\boldsymbol{x}^{t+1}$. Since $\varepsilon_{t+1}\geq \varepsilon_t$, we have $\boldsymbol{y}$ is $\varepsilon_{t+1}$-unreachable from $\boldsymbol{x}^{t+1}$.
For $\boldsymbol{y}\in \partial R_{t+1}\setminus \partial R_t$, we first show for every $\boldsymbol{y}\in S(E)$ where $E\in \mathcal{E}^t\cap \partial R_{t+1}$, it is $\varepsilon_t$-unreachable for $\boldsymbol{x}^{t+1}$. We note $\partial R_{t+1}\setminus \partial R_t\subseteq \mathcal{E}^t$. 
On the one hand, for $\boldsymbol{y}\in S(E)\setminus S^\star$, it is $\varepsilon_t$-unreachable from $\boldsymbol{x}^{t}$. Thus by same argument before, it is $\varepsilon_t$-unreachable from $\boldsymbol{x}^{t+1}$.
On the other hand, for $\boldsymbol{y}\in S(E)\cap S^\star$, since $\boldsymbol{x}^{t+1}=\arg\min_{\boldsymbol{z}\in S^\star} f(\boldsymbol{z})$ and $\left\|\boldsymbol{x}^{t+1} - \boldsymbol{y}\right\|_2>0$, we have
\[f(\boldsymbol{y})> f(\boldsymbol{x}^{t+1}) - \varepsilon_t \left\|\boldsymbol{x}^{t+1} - \boldsymbol{y}\right\|_2,\]
\emph{i.e. }, $\boldsymbol{y}$ is $\varepsilon_t$-unreachable from $\boldsymbol{x}^{t+1}$.
Finally, we use Lemma \ref{lem:increase-eps} as before and  claim that for all $\boldsymbol{y} \in \bigcup\limits_{ E\in \mathcal{E}^t\cap \partial R_{t+1}}S(E)$ are $\varepsilon_{t+1}$-unreachable from $\boldsymbol{x}^{t+1}$.
\end{proof}

\subsection{Constrained setting: proof of second part of Theorem \ref{the:main3}}
\label{app:constrained}
In this section, we adapt Algorithm~\ref{alg:main} to the problem of finding stationary points constrained on cube $[0,1]^d$. The goal is to find an $\varepsilon$-stationary point, also known as an $\varepsilon$-KKT point, \emph{i.e.}, a point $\boldsymbol{x}\in [0,1]^d$ such that $\left\|g(\boldsymbol{x})\right\|_2\leq\varepsilon$, where $g$ is the projected gradient of $f$ on $[0,1]^d$ defined as 
\[g_i(\boldsymbol{x}) = \begin{cases}
\min \{0,[\nabla f(\boldsymbol{x})]_i \} &\mbox{if } x_i = 0\\
[\nabla f(\boldsymbol{x})]_i & \mbox{if }x_i \in  (0,1)\\
\max \{0,[\nabla f(\boldsymbol{x})]_i \} &\mbox{if } x_i = 1.
\end{cases}\]

We modify the algorithm as follows. We also note similar argument was proposed in \cite[Section 4.3]{hollender2023computational}. 
\begin{itemize}
    \item \textbf{Initialization:} We initialize $\boldsymbol{x}^0$ to be an arbitrary point in $[0,1]^d$ instead of 
    \[R_0 = \{\boldsymbol{x}\in \R^d:\left\|\boldsymbol{x}-\boldsymbol{x}^0\right\|_\infty \leq 2f(\boldsymbol{x}^0)/\varepsilon_0\}.\]
    \item \textbf{The number of barriers.} We set $r^0 =1$ instead of $r^0 = 2f(\boldsymbol{x}^0)/\varepsilon_0$, and 
    \[\ell_t = \left( \frac{2\sqrt{d}L}{3^k\cdot \varepsilon}
\right)^{\frac{\frac{d-1}{d+1}\left(\frac{2d}{d+1}\right)^t}{\left(\frac{2d}{d+1}\right)^k-1}}.\]
With this setting, we have 
\[\max_i(b_i^k- a_i^k)\leq \left(\prod_{i=0}^{k-1}\frac{3}{\ell_t}\right)\leq \frac{\varepsilon}{2\sqrt{d}L},
\]
    \item \textbf{Extraction of a solution:} Instead of outputting $\boldsymbol{x}_d$, we check the $2^d =\Theta(1)$ corners of $R_k$ until we find one with $\left\|g(\boldsymbol{y})\right\|_2 \leq \varepsilon$. We can use $O(d)$ queries to $f$ to compute a sufficiently good approximation of $\nabla f$ and thus $g$ at any point.
\end{itemize}
The rest of the algorithm is unchanged. The analysis is also almost same and we highlight the main difference as follows.
\begin{itemize}
    \item \textbf{Correctness:} The invariant is modified as: for all $\boldsymbol{y}\in \partial R_t\setminus \partial[0,1]^d$ are $\varepsilon_t$-unreachable from $\boldsymbol{x}^t$. The proof of this new invariant is identical to the old one.

    This new invariant, combined with a modified version of the proof of Lemma~\ref{lmm:trap} (where we analyze the piecewise differentiable \emph{projected} gradient flow defined by $\gamma'(t) = -g(\gamma(t))$), establishes the following: there exists a point $\boldsymbol{x} \in R_T$ such that $\left\|g(\boldsymbol{x})\right\|_2 \leq \eps/2$. A straightforward argument then shows that any corner $\boldsymbol{y}$ of the face of $R_k$ containing $\boldsymbol{x}$ must satisfy $\left\|g(\boldsymbol{y})\right\|_2 \leq \eps$. Hence, the algorithm successfully returns an $\eps$-stationary point.
    \item \textbf{Running time:} The number of queries used by algorithm is identical to the unconstrained version, namely,  
    \begin{align*}
\# \mbox{queries in iteration }t 
&\leq \left( \sqrt{d}\cdot 3^t r^t \cdot \frac{1}{2 \sqrt{\frac{\varepsilon r^t(3/4)^{\lfloor t d\rfloor}}{40\cdot 3^k\cdot \ell_t d\sqrt{d}L }}} \right)^{d-1}\cdot d\cdot \ell_{t}\\
&\leq d\left( \sqrt{d}\cdot 3^t \cdot \frac{\sqrt{{40\cdot 3^k\cdot  d\sqrt{d}L }}}{2 \sqrt{ (3/4)^{\lfloor t d\rfloor}}} \right)^{d-1} \cdot\ell_t^{(d+1)/2}\cdot (r^t)^{(d-1)/2} \cdot\varepsilon^{-(d-1)/2}\\ 
&:= C(d,k,L)\cdot\ell_t^{(d+1)/2}\cdot (r^t)^{(d-1)/2}\cdot \varepsilon^{-(d-1)/2}\\ 
&\leq C(d,k,L)\cdot\ell_t^{(d+1)/2}\cdot \left(\left(\prod_{i=0}^{t-1}\frac{3}{\ell_t}\right) \right)^{(d-1)/2}\cdot \varepsilon^{-(d-1)/2}\\ 
&\leq C(d,k,L)\cdot \varepsilon^{-\frac{{d-1}}{\left(\frac{2d}{d+1}\right)^k-1}-\frac{d-1}{2}}.
\end{align*}
\end{itemize}

\end{document}